\documentclass[12pt]{amsart}
\usepackage{amsmath,amsfonts,amsthm,amscd, amssymb,mathrsfs}
\usepackage{enumitem}
\usepackage[sort,nocompress]{cite}
\usepackage{hyperref}
\usepackage{tikz, tikz-cd}
\usepackage{bm}
\newtheorem{theorem}{Theorem}[section]

\newtheorem{proposition}[theorem]{Proposition}
\newtheorem{lemma}[theorem]{Lemma}
\newtheorem{corollary}[theorem]{Corollary}
\theoremstyle{definition}
\newtheorem{definition}[theorem]{Definition}
\newtheorem{remark}[theorem]{Remark}

\newcommand{\id}{{\rm{id}}}

\newcommand{\CP}{\mathbb{P}}
\newcommand{\PP}{\mathbb{P}}
\newcommand{\CC}{\mathbb{C}}
\newcommand{\RR}{\mathbb{R}}
\newcommand{\ZZ}{\mathbb{Z}}

\newcommand{\rank}{{\rm{rank}}}

\newcommand{\cO}{\mathcal O}
\newcommand{\cH}{\mathcal H}
\newcommand{\cE}{\mathcal E}
\newcommand{\cT}{\mathcal T}
\newcommand{\cL}{\mathcal L}

\newcommand{\sHom}{\mathcal{H}\!om}

\newcommand{\im}{\operatorname{im}}
\newcommand{\coker}{\operatorname{coker}}

\newcommand{\tr}{\operatorname{tr}}
\newcommand{\Span}{\operatorname{span}}

\DeclareMathOperator{\Hom}{Hom}

\usepackage[letterpaper]{geometry}
\numberwithin{equation}{section}
\newcommand{\cB}{\mathcal B}
\newcommand{\cF}{\mathcal F}

\newcommand{\OO}{\mathcal O}

\newcommand{\Gr}{\operatorname{Gr}}
\newcommand{\diag}{\operatorname{diag}}

\begin{document}
\title[Combs and complex structures on the six-sphere]{Combs and complex structures on the six-sphere}
\author{Jeff Viaclovsky}
\address{Department of Mathematics, University of California, Irvine, USA}
\email{jviaclov@uci.edu}
\date{October 7, 2026.}
\begin{abstract}
We construct complex structures on the six-sphere $S^6$ with arbitrarily large local Kuranishi dimension.
As a corollary, we find the same phenomenon holds for the smooth manifold underlying complex projective 3-space $\CP^3$.
\end{abstract}
\maketitle
\tableofcontents
\section{Introduction}
The starting point for the construction in this paper is a holomorphic
fibration $f_0 : X_0 \rightarrow \PP^1$ of Alp\"oge--Claude~\cite{AC},
where $X_0$ is diffeomorphic to the standard $S^6$.
Its critical fibers consist of two multiple fibers with bielliptic
reductions, of multiplicities $3$ and $4$, and a single fiber with
normalization $dP_6$ (a rank-two Mumford degeneration).
The original Alp\"oge--Claude manuscript was AI-generated; Philip
Engel~\cite{Engel} subsequently gave a clear, self-contained geometric
proof, which he independently verified.
Other related constructions and further examples are given in \cite{Engel,Explorer,LR26,Via}, but for simplicity we will not consider those in this paper. 
For the history of the problem of complex structures on $S^6$
and an introduction to earlier work, see \cite{ABGKR} and the
other articles in the same volume.

As described in \cite{Engel}, the Alp\"oge--Claude examples are
constructed by taking a rational elliptic surface
$\pi : S \rightarrow \PP^1$ with singular fibers of
Kodaira types $\mathrm{IV}^*\,\mathrm{III}\,\mathrm{I}_1$~\cite{KodairaII} over the critical set $\Sigma \subset \PP^1$, and Mordell--Weil group $\ZZ$.
One then removes the zero section from a
certain line bundle $M$ over $S \setminus \pi^{-1}(\Sigma)$, and then
takes a $\ZZ$-quotient generated by a suitable lift of a translation
by a narrow section. This can be filled in over the missing base values,
and then one applies log transforms at the two finite-monodromy values.
Determining that the resulting manifold is a simply connected integral
homology $6$-sphere requires difficult computations.

In this paper, we take one of these complex structures $X_0$ as our starting point. 
We will then perform a modification to $X_0$ to obtain a new complex manifold $X_m$ for every sufficiently large integer $m$ which is diffeomorphic to $X_0$, and fibers over $\PP^1$ with the same types of singular fibers and monodromy structure as the original $X_0$. The complex structure on the fibers is what changes. Importantly, we do not have to carry out any difficult computations
of integral homology groups. We write $\operatorname{Def}(X)$ for the Kuranishi germ of a compact complex manifold $X$, namely the base germ of a semiuniversal deformation of $X$. A more precise statement of our construction is as follows.

\begin{theorem} 
\label{t:main}
Let $f_0:X_0\to\PP^1$ be a fibration arising from the Alp\"oge--Claude $\mathrm{IV}^*\,\mathrm{III}\,\mathrm{I}_1$ configuration \cite{AC,Engel}. For every integer $m\geq12$,  there is a holomorphic fibration
$f_m:X_m\to\PP^1$ such that
\begin{enumerate}[label=\textup{(\roman*)}]
\item There is a diffeomorphism $\Phi_m:X_0\to X_m$ with
$f_m\circ\Phi_m=f_0$. The integral monodromy, critical values,
multiplicities, and reduced critical fibers are unchanged.
\item The fibration $f_m$ is the algebraic reduction of $X_m$; in particular, $a(X_m)=1$. The Kodaira dimension $\kappa(X_m) = - \infty$.
\item $X_m$ satisfies
\begin{align}\label{newhodge}
 h^{0,1}(X_m)=2m+1,\quad h^{0,2}(X_m)=2m,\quad h^{0,3}(X_m)=0.
\end{align}
\item The local Kuranishi dimension of $X_m$ satisfies
\begin{align}\label{intromoduli}
 h^1(X_m,\Theta_{X_m})\ \geq\
 \dim\operatorname{Def}(X_m)\ \geq\ 8m -8.
\end{align}
\end{enumerate}
\end{theorem}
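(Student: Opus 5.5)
The construction will replace the rational elliptic surface input by a family whose sheaf-theoretic invariants grow with $m$, while keeping the smooth topology fixed. I expect the $m$-dependence to enter through the line bundle $M$ over $S\setminus\pi^{-1}(\Sigma)$, or through its extension over $S$. Concretely, I plan to take $M_m = M\otimes \pi^*\cO_{\PP^1}(m)$, or a twist by a degree-$m$ divisor supported away from $\Sigma$. The $\ZZ$-quotient by a lift of translation by the narrow section, the filling-in over $\Sigma$, and the two logarithmic transforms at the finite-monodromy points would then be carried out exactly as in \cite{Engel}.

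\textbf{Step 1 (part (i)).} Twisting by a pullback from the base does not change the underlying $C^\infty$ circle/torus bundle data over $S\setminus \pi^{-1}(\Sigma)$ up to a fiberwise isomorphism. The fiberwise structure group and the monodromy depend only on the restriction to fibers, where $\pi^*\cO(m)$ is trivial. I would therefore build $\Phi_m$ in two pieces. Over the complement of small discs around $\Sigma$, it comes from a smooth trivialization of $\pi^*\cO(m)$ restricted to $\pi^{-1}$ of the disc complement, extended fiberwise. Near $\Sigma$, the local models, including the log transforms, are literally unchanged because the twist is trivial there. This gives $f_m\circ\Phi_m=f_0$ and unchanged monodromy, multiplicities and reduced fibers.

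\textbf{Step 2 (parts (ii) and (iii)).} I would compute $R^if_{m*}\cO_{X_m}$ with the Leray spectral sequence. Generic fibers are compact complex $4$-folds, namely $\CC^*$-bundles over elliptic surfaces quotiented by $\ZZ$, i.e.\ torus-type fibers. Their structure sheaf cohomology is computed by the fiberwise Borel-type spectral sequence. I expect $f_{m*}\cO=\cO$, with $R^1 f_{m*}\cO$ and $R^2 f_{m*}\cO$ being line bundles on $\PP^1$, corrected at the multiple fibers, whose degrees shift linearly in $m$, roughly $-2m-2$ and $-2m-1$. Then $h^{0,1}=h^1(\cO)+h^0(R^1)$ and so on, and Riemann--Roch on $\PP^1$ should produce $2m+1$ and $2m$. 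The constraint $m\ge 12$ is where the corrections from the multiplicities $3,4$ and the $\mathrm{IV}^*$ fiber stop interfering. For the algebraic reduction, fibers carry no nonconstant meromorphic functions, since the translation by a non-torsion section is irrational. Hence $a(X_m)\le 1$, and $f_m$ realizes $a=1$. Finally, $\kappa=-\infty$ follows because $K_{X_m}$ restricted to a general fiber is nontrivial of non-torsion type, or because $h^0(mK)=0$ via the same pushforward computation.

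\textbf{Step 3 (part (iv)).} The first inequality is the standard bound $\dim \operatorname{Def}\le h^1(\Theta)$. For the lower bound, I would exhibit an explicit unobstructed family of dimension $8m-8$. The fibration data, namely $M_m$ with its extension and the choice of twisting divisor, the lift of the translation, and the gluing parameters, vary holomorphically. By Step 1 every member is diffeomorphic to $X_0$. I would parametrize this family by an open subset of a space such as $H^0$ of suitable bundles of degree $\sim 2m$ on $\PP^1$, counted against $\mathrm{Aut}(\PP^1)$ and the fiber automorphisms. I would then show the Kodaira--Spencer map is injective by checking that it lands injectively in $H^1(\PP^1, f_{m*}\Theta_{X_m/\PP^1})\subset H^1(\Theta_{X_m})$, a piece computed as in Step 2. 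A complete family with injective Kodaira--Spencer map embeds its base in $\operatorname{Def}(X_m)$, which gives the lower bound. I expect the main obstacle to be Step 2 and the corresponding pushforward of $\Theta$: controlling the torsion and corrections of $R^if_{m*}$ at the multiple and Mumford-type fibers so that the exact counts $2m+1$, $2m$ and $8m-8$ come out. I would first handle this with local models at each critical value, as in \cite{Engel}, and glue them with a Mayer--Vietoris argument on $\PP^1$.
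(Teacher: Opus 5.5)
There is a genuine gap: the twist $M_m=M\otimes\pi^*\cO_{\PP^1}(m)$ leaves unchanged all the Hodge-theoretic data that determine $h^{0,1}$ and $h^{0,2}$.

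Since $B'=\PP^1\setminus\Sigma$ is a noncompact Riemann surface, $\cO_{\PP^1}(m)|_{B'}$ is holomorphically trivial. The natural lift of translation on $M_m$ is the old lift tensored with the identity. Hence $M_m^\times/\ZZ$ is biholomorphic to $X_0'$ over $B'$: the tori, the variation of Hodge structure and the period section $\sigma_0'$ are literally the same. The only effect of the twist is to reglue the unchanged local fillings along the annuli. The regluing maps are fiberwise translations in the $\CC^*$-direction (the circle class $\delta$), with total winding number $m$. Translations act trivially on $R^1f_*\cO$. By the argument of Proposition~\ref{localquotient}, this sheaf is $\cH/\cF$, determined by the period section and the local fillings alone. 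So you still get $R^1f_{m*}\cO_{X_m}\simeq\OO_{\PP^1}\oplus\OO_{\PP^1}(-1)$. If $X_m$ were still diffeomorphic to $S^6$, Proposition~\ref{degrees} would force $h^{0,1}(X_m)=1$ and $h^{0,2}(X_m)=0$. Your Step~1 does not actually establish that diffeomorphism: the regluing is a twist by a degree-$m$ circle bundle, and its effect on topology would have to be computed.

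Step~2 also misreads the geometry. The fibers are complex two-tori, since $M^\times|_{S_b}$ is a $\CC^*$-bundle over an elliptic curve. So $R^1f_*\cO$ has rank two, not one. On any complex $S^6$, Proposition~\ref{degrees} gives $h^{0,1}=\deg R^1f_*\cO+2$. Degrees near $-2m-2$ are impossible there, and would give $h^{0,1}=0$ anyway.

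The other parts have matching problems. By adjunction $K_{X_m}|_F\simeq\cO_F$ on a torus fiber, so your argument for $\kappa=-\infty$ fails. The twisting data is a degree-$m$ line bundle on $\PP^1$, and all of these are isomorphic, so it supplies no family whose dimension grows like $8m$.

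The missing idea is that the degree of $R^1f_*\cO=\cH/\cF$ must be changed by changing the period section itself. What has to change is its curve class in the Grassmannian bundle $Y=\Gr_B(2,\cH)$. Its values at $\Sigma$ must stay fixed, so that the hexagonal and finite-quotient fillings still apply.

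The paper does this as follows.
\begin{enumerate}
\item It attaches $m$ deformed twistor conics to $\sigma_0(B)$. On each conic $\det\mathcal Q$ has degree $2$.
\item It smooths the resulting comb, giving $\deg\sigma_m^*\mathcal Q=-1+2m$.
\item Proposition~\ref{degrees} then yields $h^{0,1}=2m+1$ and $h^{0,2}=2m$.
\item $\kappa=-\infty$ follows from $f_{m*}K_{X_m}\simeq\OO_{\PP^1}(-2m-1)$.
\end{enumerate}
The lower bound $8m-8$ is $h^0(B,\cF_m^*\otimes E_m(-\Sigma))$, the dimension of the space of sections of $Y\to B$ fixed at $\Sigma$. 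This count needs the vanishing \eqref{normalvan}. The threshold $m\geq12$ is the comb-smoothing bound \eqref{bound} applied to $N_{\sigma_0(B)/Y}=\OO(-1)\oplus\OO^{2}\oplus\OO(1)$ with $r=3$. It is not a point where local corrections at the multiple fibers stop interfering.
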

Yau conjectured that every smooth manifold of real dimension $2n$,
$n>2$, admitting an almost complex structure also admits an
integrable complex structure \cite[Problem~6, p.~269]{YauReview}.
The construction of Alp\"oge--Claude establishes the $S^6$ case of
this conjecture. Theorem~1.1 gives further examples of complex structures on
$S^6$ with arbitrarily large local Kuranishi dimension.

The observation that blowing up a complex six-sphere at a point
yields a non-standard complex structure on $\PP^3$ goes back to
Hirzebruch \cite[p.~223]{Hirzebruch1954}; see also \cite{HKP,Tosatti}.
Here we show that the deformations in Theorem \ref{t:main} also lift, together with at least
one additional deformation parameter arising from the freedom to
move the point of the blow-up. The following shows that the
classification of complex
structures on $\PP^3$, as requested in Hirzebruch's Problem~28
\cite{Hirzebruch1954}, must accommodate local deformation spaces of
arbitrarily large dimension.

\begin{corollary}\label{cpthree}
The smooth manifold $\CP^3$ admits complex structures with arbitrarily
large local Kuranishi dimension. More precisely, for any point
$p_m\in X_m$, the blow-up $\widehat X_m=\operatorname{Bl}_{p_m}X_m$
is diffeomorphic to $\CP^3$ and satisfies
\begin{align}\label{cpmoduli}
 \dim\operatorname{Def}(\widehat X_m)
 \geq \dim\operatorname{Def}(X_m)+1
 \geq  8m-7.
\end{align}
These complex manifolds are non-K\"ahler, have algebraic dimension one, and have Kodaira
dimension $-\infty$. Moreover,
\begin{align}\label{cphodge}
 h^{0,1}(\widehat X_m)=2m+1,\qquad
 h^{0,2}(\widehat X_m)=2m,\qquad
 h^{0,3}(\widehat X_m)=0.
\end{align}
\end{corollary}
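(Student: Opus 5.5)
The argument takes Theorem~\ref{t:main} as given and studies the effect of blowing up one point. \textbf{Topology.} Blowing up a point of a complex $3$-fold replaces a ball by a tubular neighborhood of $\PP^2$ in $\mathcal O_{\PP^2}(-1)$. Hence $\operatorname{Bl}_{p_m}X_m$ is diffeomorphic to $X_m\#\overline{\CP^3}$, with the orientation conventions for a complex blow-up. By Theorem~\ref{t:main}(i), $X_m$ is diffeomorphic to $X_0\cong S^6$. So $\widehat X_m\cong S^6\#\overline{\CP^3}\cong\overline{\CP^3}$, and $\overline{\CP^3}$ is diffeomorphic to $\CP^3$ via complex conjugation.

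\textbf{Invariants.} The Hodge numbers $h^{0,q}$ are bimeromorphic invariants of compact complex manifolds, since $R\sigma_*\mathcal O_{\widehat X}=\mathcal O_X$ for a point blow-up. Thus \eqref{cphodge} follows from \eqref{newhodge}. The algebraic dimension and the Kodaira dimension are also bimeromorphic invariants, so Theorem~\ref{t:main}(ii) gives $a(\widehat X_m)=1$ and $\kappa(\widehat X_m)=-\infty$.

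\textbf{Non-K\"ahler.} If $\widehat X_m$ were K\"ahler, then $b_1=2h^{0,1}$. But $b_1(\CP^3)=0$, while $h^{0,1}=2m+1>0$. (Alternatively, a K\"ahler manifold bimeromorphic to a variety of algebraic dimension one would violate Hodge symmetry in the same way.)

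\textbf{Deformations.} This is the main step. Let $\mathcal X\to \operatorname{Def}(X_m)=:B$ be the semiuniversal family, and form the relative blow-up of $\mathcal X\times_B\mathcal X$ along its diagonal over $\mathcal X$. This gives a family of blow-ups $\operatorname{Bl}_{p_t}X_t$ over a germ $(\mathcal X,p_m)$ of dimension $\dim B+3$. I would not try to show this family is effective. Instead, I would bound $\dim\operatorname{Def}(\widehat X_m)$ from below using standard blow-up theory for deformations:
\begin{enumerate}
\item Deformations of $\widehat X$ blow down: the exceptional divisor $E\cong\PP^2$ has normal bundle $\mathcal O(-1)$, and $H^1(E,N_E)=0$, so $E$ deforms with every nearby deformation of $\widehat X$.
\item This identifies $\operatorname{Def}(\widehat X)$ with the deformations of the pair $(X,p)$, i.e.\ of $X$ with a marked point.
\end{enumerate}
The deformations of the pair are controlled by the sheaf $\Theta_X(-\log p)=\Theta_X\otimes\mathfrak m_p$ of vector fields vanishing at $p$. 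One can check directly that $\sigma_*\Theta_{\widehat X}=\Theta_X\otimes\mathfrak m_p$ and that $R^1\sigma_*\Theta_{\widehat X}=0$; the latter uses $H^1(\PP^2,T_{\PP^2}(k))=0$ and $H^1(\PP^2,\mathcal O(k+1))=0$ for $k\geq0$. The short exact sequence
\[
0\to\Theta_X\otimes\mathfrak m_p\to\Theta_X\to T_pX\to0
\]
then gives
\[
h^1(\widehat X,\Theta_{\widehat X})\geq h^1(X,\Theta_X)+3-h^0(X,\Theta_X).
\]
For the Kuranishi germ itself, I would argue geometrically via the forgetful morphism
\[
\operatorname{Def}(X,p)\to\operatorname{Def}(X).
\]
It is smooth with fibers the germ of $X$ at $p$ modulo the flow of $H^0(X,\Theta_X)$. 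So
\[
\dim\operatorname{Def}(\widehat X)=\dim\operatorname{Def}(X)+3-\dim(\text{orbit of }p).
\]

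The obstacle is therefore to show that the orbit of $p$ under the automorphism germ of $X_m$ has dimension at most $2$. I would use the fibration $f_m$. By Theorem~\ref{t:main}(ii) it is the algebraic reduction of $X_m$, so every automorphism preserves $f_m$. An automorphism of $\PP^1$ that is the identity on the identity component of $\operatorname{Aut}(X_m)$ acts trivially on the base, because it must fix the three critical values. Hence holomorphic vector fields are vertical, and orbits lie in fibers. Since the fibers have complex dimension $2$, each orbit has dimension at most $2$. This yields
\[
\dim\operatorname{Def}(\widehat X_m)\geq\dim\operatorname{Def}(X_m)+1\geq 8m-7,
\]
as claimed in \eqref{cpmoduli}. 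The point moving transversally to the fiber direction supplies the extra parameter.
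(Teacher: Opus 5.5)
Your proposal is correct and follows essentially the same route as the paper. The topology comes from $S^6\#\overline{\CP}^3\cong\overline{\CP}^3\cong\CP^3$. The invariants $h^{0,q}$, $a$ and $\kappa$ are bimeromorphic invariants, and non-K\"ahlerness follows from $b_1=0\neq 2h^{0,1}$. The deformation count $\dim\operatorname{Def}(\widehat X)=\dim\operatorname{Def}(X)+3-r_p$ comes from Kodaira stability of $E$ and blow-down. Finally, $r_p\le 2$ holds because $\operatorname{Aut}^0(X_m)$ fixes the three critical values, so its orbits lie in the two-dimensional fibers; at critical points of $f_m$ it is this orbit argument, not verticality of vector fields alone, that gives the bound, which is how you phrased it.
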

We do not claim the lower bounds in \eqref{cpmoduli} are optimal. It is straightforward to show directly that the complex structures in Corollary~1.2 are
non-K\"ahler, but this can also be seen by recalling a classical theorem
of Hirzebruch--Kodaira \cite{HirzKod} and Yau \cite{YauCalabi}, which implies
that any K\"ahler manifold diffeomorphic to $\PP^n$ is biholomorphic
to $\PP^n$; see also \cite{Tosatti}.

\subsection{Outline}
In Section~\ref{ray}, we 
prove that if $f : X \rightarrow \PP^1$ with $X$ diffeomorphic to $S^6$ and generic fiber a complex 2-torus, then it is cohomologically flat in all degrees, and therefore
 \begin{align}
R^1f_* \cO_{X} = \cO_{\PP^1}(a)\oplus\cO_{\PP^1}(b),\quad a\leq b, \qquad  R^2f_*\cO_{X}=\cO_{\PP^1}(c),\quad c=a+b.
\end{align}
In Section~\ref{coherent}, we prove Lemma~\ref{nowhere} about the existence of a nowhere-vanishing section of $R^1f_* \cO_X$. 
Then we show that $c\geq-1$ and that there are precisely two alternatives: $a,b\geq0$, or $(a,b)=(-1,0)$.  The
Alp\"oge--Claude configuration is of the second type. The remainder of the paper explores the first possibility.

The idea of the proof of Theorem \ref{t:main} is as follows. Denote $B = \PP^1$.  Since $f_0 : X_0 \rightarrow B$
is a torus fibration, there is a rank~$4$ flat holomorphic bundle $\mathcal{H}'$ on $B' = B \setminus \Sigma$, where $\Sigma$ denotes the critical values. The complex structure on the fiber tori determines a rank~$2$ holomorphic Hodge subbundle $\mathcal{F}'_0 \subset \mathcal{H}'$, which we view as a section $\sigma_0' : B' \rightarrow Y'$, 
where $Y'$ is a bundle with fiber the Grassmannian $\operatorname{Gr}(2,\CC^4)$. Let $D' \subset Y'$ denote the open period domain bundle. Then $\sigma_0'$ takes values in $D'$. 
The local filling models are recalled in Subsection~\ref{localmodels}, and
in Subsection~\ref{ss:period}, we show that 
the bundles $\mathcal{H}', \mathcal{F}'_0, Y'$, and section $\sigma_0'$ extend 
to bundles $\mathcal{H}, \mathcal{F}_0, Y$, and section $\sigma_0$, respectively, defined over all of $\PP^1$. We also show that in these extensions, the Gauss--Manin
connection has at most logarithmic poles along $\Sigma$.
In Subsection~\ref{tautquot}, we show that the natural tautological and quotient bundles defined over $Y'$ extend to bundles over $Y$. 

Note that the bundles $Y$ and $D' \subset Y'$ have $5$-dimensional total spaces and $4$-dimensional fibers; we are not considering polarized complex structures since the very general fiber of $f_0 : X_0 \rightarrow \PP^1$ is not polarized.
Since every complex two-torus admits a compatible flat
hyperk\"ahler metric, there are holomorphic rational curves
in the period domain $D'_p$ called \textit{twistor conics}
passing through any given point.
These have degree $2$ under the Pl\"ucker embedding of the Grassmannian into $\PP^5$. 
In Subsection \ref{conics}, we recall some basic details about twistor conics, 
and compute the restriction of the tautological and quotient bundles to a twistor conic.
We then introduce \textit{combs} in Definition~\ref{d:comb}, and give a fundamental degree calculation. More specifically, the \textit{comb} is the nodal curve given by $\sigma_0(B) \cup C_1 \cup \cdots \cup C_m$, where the \textit{teeth} $C_i$ are small complex deformations of twistor conics in $Y_{b_i}$ over distinct points $b_i \in B', i = 1, \dots, m$. 
The degree calculation shows that the degree gets larger as the number of teeth is increased; this is one of the key points. 
In Subsection \ref{combsmooth}, we next use a classical technique called \textit{comb smoothing};
see \cite{Kollar, GHS, HT}. 
In Proposition~\ref{comb}, we show that, for $m$ sufficiently large, this singular curve admits smoothings, which yield sections $\sigma_s : B \rightarrow Y$ for $s$ sufficiently small and nonzero. 

For every fixed compact subset $K\Subset B'$, these sections
define a smooth complex torus fibration over a neighborhood
of $K$ for $s$ sufficiently small.
 In Section~\ref{fillings}, we show that this torus fibration can be filled in locally in the same way as the original $X_0$ in a neighborhood of each critical value. In Subsection~\ref{compare}, we also identify the first higher direct image sheaf $R^1 f_{s*} \OO_{X_s}$.

Then in Section~\ref{glue}, we show that these local fillings can be chosen consistently to produce a global fibration over $\PP^1$ with the same type of singular fibers. 
For $s \neq 0$ sufficiently small, this yields the complex manifold $X_s$, together with a fibration $f_s : X_s \rightarrow \PP^1$, 
and the diffeomorphism $\Phi_s:X_0\to X_s$ with $f_s\circ\Phi_s=f_0$. 

In Section~\ref{s:deformations}, we count the free parameters in this construction and show that the corresponding deformations are nontrivial in the Kuranishi space. The parameters come from the freedom to choose the teeth of the comb. Note that the $X_s$ are not  small deformations of the original $X_0$, due to the singularities of the comb configuration. 

In Section~\ref{applications}, we prove Theorem~\ref{t:main} and
Corollary~\ref{cpthree}. The first step is done in
Subsection~\ref{boundexplicit} by using properties of the
Alp\"oge--Claude configuration to compute the
degree of $R^1f_{m*}\OO_{X_m}$. In Subsection~\ref{smoothingbound},
we then estimate the number of teeth of the comb sufficient for the
construction to work, which explains the restriction $m\geq12$ in Theorem~\ref{t:main}. In Subsection~\ref{nonnegative}, we derive the
Hodge numbers and prove the deformation claims. In
Subsection~\ref{analyticprops}, we discuss some other analytic
properties of $f_m:X_m\to\PP^1$. In particular, we show that $f_m$
is the algebraic reduction and determine the Kodaira dimension. 
In Subsection~\ref{blowups}, we prove
Corollary~\ref{cpthree}.

Finally, in Section~\ref{future}, we conclude the paper by listing several questions for future research.

\subsection{AI Statement}
The main mathematical ideas in this paper are due to the author
and were explored and developed further through an extended
dialogue with ChatGPT 6 Astra. The tool was used throughout the
preparation of the manuscript, both in improving
the exposition and in developing some of the mathematical
arguments. The author takes full responsibility for the
mathematical content of the paper.

\subsection{Acknowledgements}
The author would like to thank Brian Conrad for assistance with Lemma~\ref{bcgeneral} and for communicating an outline of a proof of Theorem~\ref{raynaud} from Ofer Gabber.
Thanks are also due to Nobuhiro Honda for discussions
about the material in Section~\ref{coherent} and many other crucial discussions regarding the possibility of a complex structure on $S^6$; especially through our joint work~\cite{HV}. 
Ronan Conlon, Sean Paul and Yanir Rubinstein provided very useful feedback about some complex geometry issues.  
The author also held useful discussions with Philip Engel, Wenfei Liu, and S\"onke Rollenske regarding the properties of complex structures on $S^6$.

%%%%%%%%%%%%%%%%%%%%%%%%%%%%%%%%%%%%%%%%%%%%%%%%%%%%%%%%%%55
\section{Cohomological flatness}\label{ray}
Let $f:X\to\Delta$ be a proper surjective holomorphic map from a
connected complex $n$-manifold, with connected fibers. The fiber $X_y$
means the full analytic fiber, including its multiplicities. 
We give the characteristic-zero analytic
argument corresponding to Raynaud's degree-zero cohomological flatness theorem
\cite{Raynaud}; see also \cite{GabberLodh} for higher-dimensional
algebraic versions. We begin with the following lemma. 
\begin{lemma}\label{bcgeneral}
For every $y\in\Delta$, every $j\geq0$, and every local holomorphic
coordinate $t$ on $\Delta$ with $t(y)=0$, there is a natural exact sequence
\begin{align}\label{bcseq}
0\longrightarrow (R^jf_*\cO_X)_y\otimes k(y)
 \xrightarrow{\beta_y^j}H^j(X_y,\cO_{X_y})
 \longrightarrow (R^{j+1}f_*\cO_X)_y[t]\longrightarrow0,
\end{align}
where
\begin{align*}
 (R^{j+1}f_*\cO_X)_y[t]
 =\{\xi\in(R^{j+1}f_*\cO_X)_y\mid t\xi=0\}.
\end{align*}
\end{lemma}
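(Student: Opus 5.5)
The plan is to obtain \eqref{bcseq} from the long exact sequence of higher direct images attached to multiplication by $t$. Since $X$ is a manifold and $f$ is surjective, $f^*t$ is a nonzero holomorphic function on the connected manifold $X$. Hence it is a non-zero-divisor in $\cO_X$, and we have a short exact sequence
\begin{align*}
0\longrightarrow \cO_X\xrightarrow{\;\cdot f^*t\;}\cO_X\longrightarrow \cO_{X_y}\longrightarrow 0 ,
\end{align*}
where $X_y$ is the full analytic fiber $f^{-1}(y)$ with its scheme/analytic structure defined by the ideal $(f^*t)$. This matches the convention that fibers carry their multiplicities. Applying $f_*$ gives a long exact sequence of coherent sheaves on $\Delta$ (coherence by Grauert's direct image theorem):
\begin{align*}
\cdots\to R^jf_*\cO_X\xrightarrow{t}R^jf_*\cO_X\to R^jf_*\cO_{X_y}\to R^{j+1}f_*\cO_X\xrightarrow{t}R^{j+1}f_*\cO_X\to\cdots .
\end{align*}

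Next I take stalks at $y$. The sheaf $\cO_{X_y}$ is supported on the compact fiber $X_y$, which maps to the single point $y$. Therefore $R^jf_*\cO_{X_y}$ is a skyscraper at $y$ with stalk $H^j(X_y,\cO_{X_y})$. Its stalk at $y$ is identified with this cohomology by the proper base change theorem for topological direct images: for proper $f$ and a sheaf supported on $f^{-1}(y)$, the stalk of $R^jf_*$ is the cohomology of the fiber. Breaking the stalk sequence at $y$ at the $j$-th spot then gives
\begin{align*}
0\to \coker\big(t:(R^jf_*\cO_X)_y\to(R^jf_*\cO_X)_y\big)\to H^j(X_y,\cO_{X_y})\to \ker\big(t \text{ on } (R^{j+1}f_*\cO_X)_y\big)\to 0 .
\end{align*}
The cokernel is $(R^jf_*\cO_X)_y/t(R^jf_*\cO_X)_y=(R^jf_*\cO_X)_y\otimes_{\cO_{\Delta,y}}k(y)$, because $t$ generates the maximal ideal of $\cO_{\Delta,y}$. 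The kernel is precisely $(R^{j+1}f_*\cO_X)_y[t]$.

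It remains to check that the first map is the natural base change map $\beta_y^j$. This map is induced by the restriction $\cO_X\to\cO_{X_y}$, and it is indeed the map appearing in the connecting sequence, so this is immediate. Naturality is clear from the construction. The sequence does depend on $t$ only through the ideal $(t)=\mathfrak m_y$, so the result is independent of the coordinate up to canonical isomorphism.

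The main point requiring care, which is the mild obstacle, is the stalk identification $(R^jf_*\cO_{X_y})_y\cong H^j(X_y,\cO_{X_y})$ for the non-reduced fiber. For this I would use properness of $f$. There is a fundamental system of neighborhoods $U$ of $y$, and $f^{-1}(U)$ ranges over neighborhoods of $X_y$. The cohomology of $\cO_{X_y}$, a sheaf supported on $X_y$, on $f^{-1}(U)$ equals its cohomology on $X_y$, by the standard fact that cohomology of a sheaf supported on a closed set is computed on that set. No limit argument beyond this is then needed.
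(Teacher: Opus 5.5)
Your proposal is correct and is the same argument as the paper's: apply $Rf_*$ to the Cartier-divisor sequence $0\to\cO_X\xrightarrow{t}\cO_X\to\cO_{X_y}\to0$, take stalks at $y$, and read off the cokernel of $t$ on $(R^jf_*\cO_X)_y$ and the kernel of $t$ on $(R^{j+1}f_*\cO_X)_y$. You also justify two points the paper leaves implicit: that $t\circ f$ is a non-zero-divisor, by the identity theorem, and that $(R^jf_*\cO_{X_y})_y=H^j(X_y,\cO_{X_y})$, because $\cO_{X_y}$ is pushed forward from the closed fiber. One quibble: replacing $t$ by $ut$ rescales the connecting map by the nonzero scalar $u(y)^{-1}$, so the sequence depends on more than the ideal $(t)$, although your ``canonical isomorphism'' conclusion still holds.
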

\begin{proof}
Apply $Rf_*$ to the Cartier-divisor sequence
$0\to\cO_X\xrightarrow{t}\cO_X\to\cO_{X_y}\to0$.
The relevant part of its long exact sequence is
\begin{align*}
 (R^jf_*\cO_X)_y\xrightarrow{t}(R^jf_*\cO_X)_y
 \longrightarrow H^j(X_y,\cO_{X_y})
 \longrightarrow (R^{j+1}f_*\cO_X)_y\xrightarrow{t}(R^{j+1}f_*\cO_X)_y.
\end{align*}
Taking the indicated cokernel and kernel proves the claim.
\end{proof}

Since the fibers are connected, $f_*\cO_X=\cO_\Delta$.
The higher direct images $R^jf_*\cO_X$, for $j\geq1$,
need not all be locally free in general.
The following theorem establishes local freeness in degree one.

\begin{theorem}\label{raynaud}
The sheaf $R^1f_*\cO_X$ is locally free, and
\begin{align}\label{raygoal}
H^0(X_y,\cO_{X_y})=\CC
 \quad\text{for every }y\in\Delta.
\end{align}
\end{theorem}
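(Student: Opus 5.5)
The plan is to reduce the statement to a torsion question via Lemma~\ref{bcgeneral}, and then to kill the torsion by passing to a finite base change on which the fibres become reduced.

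\emph{Reduction to a torsion statement.} By Grauert's direct image theorem, $R^1f_*\cO_X$ is coherent on the one-dimensional smooth base $\Delta$. A coherent sheaf there is locally free if and only if it is torsion-free, that is, if and only if $(R^1f_*\cO_X)_y[t]=0$ for every $y$. Apply Lemma~\ref{bcgeneral} with $j=0$ and use $f_*\cO_X=\cO_\Delta$. This gives the exact sequence
\begin{align*}
0\to\CC\to H^0(X_y,\cO_{X_y})\to (R^1f_*\cO_X)_y[t]\to 0 .
\end{align*}
Hence both assertions of the theorem are equivalent to the single statement that $R^1f_*\cO_X$ has no $t$-torsion at each $y$. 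The difficulty is that $X_y$ may be non-reduced (multiple fibres), so we cannot simply argue that a compact connected reduced space has $H^0=\CC$. In positive characteristic this step genuinely fails, which is why the argument must use characteristic zero.

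\emph{Base change making fibres reduced.} Fix $y$ and write $X_y=\sum m_iD_i$. Let $N$ be a common multiple of the $m_i$. Let $\Delta_1\to\Delta$ be the map $u\mapsto u^N=t$. Let $X'$ be the normalization of $X\times_\Delta\Delta_1$, and let $X'\to\Delta''\to\Delta_1$ be the Stein factorization. Then $\Delta''$ is a normal curve, hence smooth, and $f'':X'\to\Delta''$ has connected fibres. The map $\pi:X'\to X$ is finite and surjective, generically Galois with group a quotient of $\mu_N$.

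We check the fibres of $f''$ over points above $y$ are reduced by a local computation. At a general point of $D_i$ we have $t=x^{m_i}\cdot(\text{unit})$. Normalizing $u^N=x^{m_i}$ splits off branches along which the pulled-back fibre is reduced, so the fibres are generically reduced. Since $X'$ is normal it is $S_2$, so the Cartier divisor given by a uniformizer of $\Delta''$ is $S_1$. A generically reduced $S_1$ space is reduced. Thus each fibre $X'_{y''}$ is compact, connected and reduced, and $H^0(X'_{y''},\cO)=\CC$. The proof of Lemma~\ref{bcgeneral} only uses that the uniformizer is a nonzerodivisor on $\cO_{X'}$, which holds because $X'$ is reduced and irreducible. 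So the same exact sequence shows $R^1f''_*\cO_{X'}$ is torsion-free, hence locally free, near these points.

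\emph{Descent via the trace.} Here characteristic zero enters. Since $\pi$ is finite, the normalized trace $\frac{1}{\deg\pi}\tr:\pi_*\cO_{X'}\to\cO_X$ splits the inclusion $\cO_X\to\pi_*\cO_{X'}$, so $\cO_X$ is a direct summand of $\pi_*\cO_{X'}$. Applying $R^1f_*$ and using finiteness of $\pi$ and of $\Delta''\to\Delta$, the sheaf $R^1f_*\cO_X$ becomes a direct summand of $p_*R^1f''_*\cO_{X'}$, where $p:\Delta''\to\Delta$ is finite. The pushforward of a torsion-free sheaf by a finite map of smooth curves is torsion-free, so $R^1f_*\cO_X$ is torsion-free at $y$. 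This gives local freeness, and then \eqref{raygoal} follows from the first step.

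\emph{Main obstacle.} I expect the hardest step to be the local verification that the normalized base change really has reduced (and, after Stein factorization, connected) fibres at every point, not just generically. This includes points where several $D_i$ meet or where the $D_i$ are singular. The $S_2$/$S_1$ argument should handle this, but it must be stated for normal complex spaces rather than manifolds, including Lemma~\ref{bcgeneral} in that setting. The trace splitting itself is routine.
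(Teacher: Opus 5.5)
Your proof is correct, but it takes a different route from the paper's.

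\textbf{The paper's route.} It first applies Hironaka's embedded resolution $\mu:\widetilde X\to X$, so that the central fiber has simple normal crossing support. It then normalizes the base change $t=u^M$ of $\widetilde X$. Reducedness of the new central fiber is checked with explicit local models: quotients of polydiscs by $G_0$, together with the averaging identity $S^{G_0}/uS^{G_0}=(S/uS)^{G_0}$. Descent happens in two stages. Taking $G$-invariants shows $R^1h_*\cO_{\widetilde X}$ is locally free. The low-degree Leray injection $R^1f_*\cO_X\hookrightarrow R^1h_*\cO_{\widetilde X}$, coming from $\mu_*\cO_{\widetilde X}=\cO_X$, then gives torsion-freeness downstairs.

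\textbf{Your route.} You normalize the base change of $X$ itself, with no resolution, and get reducedness at every point from Serre's criterion.
\begin{enumerate}
\item Generic reducedness ($R_0$) comes from the one-variable computation $u^N=x^{m_i}\cdot(\text{unit})$ at general points of each $D_i$. Since $X\to\Delta$ has connected fibers, $X\times_\Delta\Delta_1$ is irreducible, so every component of the new fiber lies over some $D_i$.
\item The $S_1$ property holds because the fiber is a Cartier divisor in a normal, hence $S_2$, space.
\item The normalized trace makes $\cO_X$ a direct summand of $\pi_*\cO_{X'}$. Hence $R^1f_*\cO_X$ is a direct summand of $p_*R^1f''_*\cO_{X'}$, which gives torsion-freeness directly on $X$.
\end{enumerate}

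\textbf{Comparison.} The two descent mechanisms are essentially the same: for the Galois group $\mu_N$, the normalized trace is the averaging operator. The genuine differences are these. You avoid resolution of singularities and the separate Leray step. You handle the points where several $D_i$ meet, or where they are singular, uniformly by the $S_2/S_1$ argument rather than by SNC local models. The price is some standard commutative algebra: Serre's criterion $R_0+S_1\Rightarrow$ reduced, and the trace for a finite map onto a normal space. The paper's route instead buys completely explicit local models of the reduced fiber.

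Your Stein factorization step is harmless. With $X\times_\Delta\Delta_1$ irreducible and having connected general fibers, $\Delta''\to\Delta_1$ is an isomorphism, as the paper observes for its $Y$.
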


\begin{proof}
A base parameter acts injectively on $\cO_X$, so $f$ is flat.
First observe that if $g:Y\to\Delta_u$ is proper and has connected reduced central
fiber, $g_*\cO_Y=\cO_{\Delta_u}$, and $u$ is a non-zero-divisor on
$\cO_Y$, then pushing forward
\begin{align}\label{redseq}
 0\longrightarrow\cO_Y\xrightarrow{u}\cO_Y
 \longrightarrow\cO_{Y_0}\longrightarrow0
\end{align}
shows that multiplication by $u$ on $(R^1g_*\cO_Y)_0$ is injective:
the preceding map $\cO_{\Delta_u,0}\to H^0(Y_0,\cO_{Y_0})=\CC$ is
surjective. Thus $R^1g_*\cO_Y$ has no torsion at $0$.

To prove local freeness at an arbitrary base point, choose a local
coordinate in which this point is $0$.
Shrink the disc so that $f$ is smooth away from its center. By
Hironaka's embedded resolution theorem in the analytic category
(see \cite{AHV}), resolve the central divisor by
$\mu:\widetilde X\to X$, and write $h=f\mu$ and
$h^{-1}(0)=\sum m_iD_i$ with simple normal-crossing support.
Choose $M$ divisible by every $m_i$, make the base change $t=u^M$,
and normalize:
\begin{align*}
 Y=\operatorname{Norm}(\widetilde X\times_{\Delta_t}\Delta_u),
 \qquad g:Y\to\Delta_u,\qquad \pi:Y\to\widetilde X.
\end{align*}
The central fiber $Y_0$ is reduced. Indeed, locally the equation is
$u^M=z_1^{m_1}\cdots z_k^{m_k}$. Put $e_i=M/m_i$. Each component of
its normalization is the quotient of a polydisc by
\begin{align*}
 G_0=\{(\zeta_i):\zeta_i^{e_i}=1,\ \prod_i\zeta_i=1\},
 \qquad z_i=x_i^{e_i},\qquad u=\prod_i x_i,
\end{align*}
up to a constant root of unity in $u$. This finite normal model is
generically one-to-one onto the chosen component. If
$S=\CC\{x_1,\ldots,x_n\}$, averaging gives
\begin{align*}
 S^{G_0}/uS^{G_0}=(S/uS)^{G_0},
\end{align*}
which is reduced.
The normal space $Y$ has connected general fiber and no vertical
component. Its Stein factor is a normal curve of generic degree one
over $\Delta_u$, hence is $\Delta_u$. Thus $Y_0$ is connected and
$g_*\cO_Y=\cO_{\Delta_u}$. The observation above gives local freeness
of $\cB=R^1g_*\cO_Y$.

The deck group $G=\mu_M$ has quotient $Y/G=\widetilde X$.
Finite pushforward is exact and averaging splits off invariants, so
(see \cite[Section~5.2, p.~202]{GrothendieckTohoku})
\begin{align}\label{invariantR}
 R^1h_*\cO_{\widetilde X}
 =\bigl(b_*R^1g_*\cO_Y\bigr)^G,\qquad b(u)=u^M.
\end{align}
The averaging operator $|G|^{-1}\sum_{\gamma\in G}\gamma$ makes
the right side a direct summand of $b_*\cB$, which is locally free
because $b$ is finite and flat. Hence the invariant sheaf is locally free.
Finally, $\mu_*\cO_{\widetilde X}=\cO_X$, and the low-degree Leray
sequence gives an injection
$R^1f_*\cO_X\hookrightarrow R^1h_*\cO_{\widetilde X}$.
It is therefore torsion-free, hence locally free on the disc.
The degree-zero instance of Lemma~\ref{bcgeneral} now
identifies $H^0(X_y,\cO_{X_y})$ with $\CC$.
\end{proof}
We next obtain cohomological flatness in higher degrees after imposing restrictions on $X$ and the general fiber. 
\begin{theorem}\label{allfree}
If $f:X\to\PP^1$ is a surjective
holomorphic map with connected fibers, $X$ is diffeomorphic to $S^6$,
and the general fiber is a complex two-torus,
then the sheaves $R^jf_*\cO_X$ are locally free for $j=0,1,2$, of ranks
$1,2,1$, respectively. Furthermore, cup product induces an isomorphism
\begin{align}\label{cupiso}
 \Lambda^2(R^1f_*\cO_X)\simeq R^2f_*\cO_X .
\end{align}
Consequently, for every $y\in\PP^1$, including the critical values,
\begin{align}\label{fiberh}
 \bigl(h^0(\cO_{X_y}),h^1(\cO_{X_y}),h^2(\cO_{X_y})\bigr)
 =(1,2,1).
\end{align}
\end{theorem}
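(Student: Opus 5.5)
The first reductions are formal. By Theorem~\ref{raynaud}, $f_*\cO_X=\cO_{\PP^1}$ and $R^1f_*\cO_X$ is locally free. Its rank is $2$, read off at a general point where the fiber is a two-torus. All fibers have dimension $2$, so $R^jf_*\cO_X=0$ for $j\geq3$. The degree-two instance of Lemma~\ref{bcgeneral} then gives $(R^2f_*\cO_X)_y\otimes k(y)\simeq H^2(X_y,\cO_{X_y})$ for every $y$. The degree-one instance gives
\begin{align*}
0\to (R^1f_*\cO_X)_y\otimes k(y)\to H^1(X_y,\cO_{X_y})\to (R^2f_*\cO_X)_y[t]\to 0 .
\end{align*}
So the whole theorem reduces to one claim: $R^2f_*\cO_X$ has no torsion at the finitely many critical values. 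Once that holds, $R^2f_*\cO_X$ is a torsion-free sheaf on a curve, hence locally free, of rank $1$ (again read off at a general fiber). Lemma~\ref{bcgeneral} then yields \eqref{fiberh} at every $y$.

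To get torsion-freeness I will go through the cup product. Cup product gives a sheaf map $\Lambda^2(R^1f_*\cO_X)\to R^2f_*\cO_X$. The source is a line bundle. Over $B'$ the map is an isomorphism, since for a complex torus $\Lambda^2H^1(\cO)\simeq H^2(\cO)$. Hence its kernel is a torsion subsheaf of a line bundle, so it vanishes, and the map is injective. If I can show the map is surjective on stalks at each critical value $y$, then $R^2f_*\cO_X\simeq\Lambda^2(R^1f_*\cO_X)$. That gives local freeness and \eqref{cupiso} simultaneously. By Nakayama, surjectivity at $y$ reduces to showing that the composite
\begin{align*}
\Lambda^2\bigl((R^1f_*\cO_X)_y\otimes k(y)\bigr)\to \Lambda^2 H^1(X_y,\cO_{X_y})\to H^2(X_y,\cO_{X_y})
\end{align*}
is nonzero. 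Equivalently, the fiber cup product must already hit $H^2(X_y,\cO_{X_y})$ using classes that extend to nearby fibers.

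This surjectivity is the main obstacle, and this is the only place where the hypothesis that $X$ is diffeomorphic to $S^6$ should enter. My first attempt is to reuse the semistable reduction $g:Y\to\Delta_u$ from the proof of Theorem~\ref{raynaud}. There the central fiber is reduced with normal crossings in a finite quotient model. For such a $g$, I expect the Steenbrink/Deligne-type theory of the limit to give two facts: $R^qg_*\cO_Y$ is locally free, and cup product on it is compatible with the limiting Hodge filtration. The plan is then:
\begin{enumerate}
\item[(a)] prove $\Lambda^2R^1g_*\cO_Y\simeq R^2g_*\cO_Y$ on $\Delta_u$;
\item[(b)] take $G$-invariants as in \eqref{invariantR};
\item[(c)] compare with $X$ through the Leray maps $R^qf_*\cO_X\to R^qh_*\cO_{\widetilde X}$, using $R^q\mu_*\cO_{\widetilde X}$ supported on the central fiber.
\end{enumerate}

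As a backup, or as a consistency check, I will use a global Euler-characteristic count. Since $H^2(S^6;\ZZ)=0$, we have $c_1(X)=0$, so Riemann--Roch gives $\chi(\cO_X)=c_1c_2/24=0$. Leray over $\PP^1$ then gives
\begin{align*}
0=\chi(\cO_X)=1-\chi(R^1f_*\cO_X)+\chi(R^2f_*\cO_X).
\end{align*}
Write $R^2f_*\cO_X=\cL\oplus\tau$ with $\cL$ a line bundle and $\tau$ torsion. Compare $\deg\cL$ with $\deg\Lambda^2R^1f_*\cO_X$ through the injective cup-product map above. I hope this bounds $\operatorname{length}(\tau)$ by zero, after also using relative duality: $R^1f_*\omega_{X/\PP^1}$ detects the torsion of $R^2f_*\cO_X$, and $K_X$ is topologically trivial on $S^6$. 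I do not yet know which of these two routes closes cleanly. My expectation is that the local cup-product argument is the honest one and the global count mainly serves as confirmation.
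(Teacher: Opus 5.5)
Your preliminary reductions are fine, but the route you designate as the main one does not close. Step (a) rests on Steenbrink-type results: local freeness of $R^qg_*\cO_Y$ for the semistable model, and compatibility of cup product with a limiting Hodge filtration. These are proved under Kähler hypotheses, which are what make the logarithmic Hodge--de Rham spectral sequence degenerate. Here $X$ is not Kähler, and nothing in the hypotheses makes $f^{-1}(\Delta)$, or the components of the resolved central fiber, Kähler. So (a) has no available proof.

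More structurally, (a)--(c) is a purely local argument over a disc, so it cannot use the hypothesis that $X$ is diffeomorphic to $S^6$, even though you correctly sense that this hypothesis must enter. You would in effect be proving torsion-freeness of $R^2f_*\cO_X$ for arbitrary non-Kähler degenerations of two-tori, and you have no tool for that.

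There is also a circularity in your Nakayama step. If $R^2f_*\cO_X$ had torsion at $y$, then $H^2(X_y,\cO_{X_y})\simeq R^2f_*\cO_X\otimes k(y)$ would have dimension at least two. A map from the one-dimensional space $\Lambda^2\bigl((R^1f_*\cO_X)_y\otimes k(y)\bigr)$ into it could then be nonzero without being surjective. Nonvanishing of the composite suffices only once you already know $h^2(\cO_{X_y})=1$.

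Your ``backup'' is exactly the paper's proof, and you already have every ingredient. Relative duality and the topological triviality of $K_X$ are not needed. Let $\ell$ be the length of the torsion $\tau$ and $\cL=R^2f_*\cO_X/\tau$. The composite $\Lambda^2R^1f_*\cO_X\to R^2f_*\cO_X\to\cL$ is a generically isomorphic, hence nonzero, map of line bundles on $\PP^1$, so $\deg\cL\geq\deg R^1f_*\cO_X$. Riemann--Roch on $\PP^1$ gives $\chi(R^1f_*\cO_X)=2+\deg R^1f_*\cO_X$ and $\chi(R^2f_*\cO_X)=1+\deg\cL+\ell$. Your identity $0=1-\chi(R^1f_*\cO_X)+\chi(R^2f_*\cO_X)$ therefore becomes
\[
0=\bigl(\deg\cL-\deg R^1f_*\cO_X\bigr)+\ell,
\]
a sum of two nonnegative terms. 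Hence $\ell=0$ and the degrees agree. The injective cup product map you constructed is then a map of line bundles of equal degree, i.e.\ an isomorphism. This gives local freeness and \eqref{cupiso} at once, and Lemma~\ref{bcgeneral} then gives \eqref{fiberh}.

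The $S^6$ hypothesis enters only through $\chi(\cO_X)=\frac{1}{24}\int_X c_1c_2=0$. This is a global input, and no local analysis at the critical fibers replaces it.
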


\begin{proof}
The degree-zero and degree-one claims follow from
Theorem~\ref{raynaud}.
The generic ranks of $R^1f_*\cO_X$ and $R^2f_*\cO_X$ are two and one
because the general fiber is a complex two-torus. Let $\cT$ be the torsion subsheaf of $R^2f_*\cO_X$,
$\cL=(R^2f_*\cO_X)/\cT$, and $\ell=\operatorname{length}\cT$.
The torsion-free rank-one sheaf $\cL$ is a line bundle.
On a smooth torus,
$\Lambda^2 H^1(F,\cO_F)\to H^2(F,\cO_F)$ is an isomorphism.
Thus the composite map
\begin{align}\label{cupquot}
 \det(R^1f_*\cO_X)\xrightarrow{\smile}R^2f_*\cO_X\longrightarrow\cL,
\end{align}
where the second arrow is the quotient map, is a nonzero
homomorphism of line bundles. Consequently,
$\deg\cL-\deg(R^1f_*\cO_X)\geq0$.

Hirzebruch--Riemann--Roch gives
\begin{align}
 \chi(X,\cO_X)=\frac1{24}\int_X c_1(X)c_2(X)=0,
\end{align}
since $H^2(X,\ZZ)=H^4(X,\ZZ)=0$.
The Leray spectral sequence for $\cO_X$ is
\begin{align*}
 E_2^{p,q}=H^p(\PP^1,R^qf_*\cO_X)
 \ \Longrightarrow\ H^{p+q}(X,\cO_X).
\end{align*}
There are no coherent direct images in degrees greater than two, so
\begin{align*}
 \chi(X,\cO_X)
&=\sum_{p,q}(-1)^{p+q}
   \dim_{\CC}H^p(\PP^1,R^qf_*\cO_X)\\
&=\sum_{q=0}^2(-1)^q\chi(\PP^1,R^qf_*\cO_X)
\\
&=\chi(\cO_{\PP^1})-\chi(R^1f_*\cO_X)+\chi(R^2f_*\cO_X).
\end{align*}
The exact sequence
\begin{align*}
 0\longrightarrow\cT\longrightarrow R^2f_*\cO_X
 \longrightarrow\cL\longrightarrow0
\end{align*}
gives
\begin{align*}
 \chi(R^2f_*\cO_X)=\chi(\cL)+\chi(\cT)=\chi(\cL)+\ell,
\end{align*}
since $\cT$ has finite support and
$h^0(\PP^1,\cT)=\ell$, $H^1(\PP^1,\cT)=0$.
Applying Riemann--Roch on $\PP^1$ therefore yields
\begin{align}\label{rrdegree}
\begin{split}
 0&=\chi(\cO_{\PP^1})-\chi(R^1f_*\cO_X)+\chi(R^2f_*\cO_X)\\
 &=1-(2+\deg(R^1f_*\cO_X))+(1+\deg\cL+\ell)\\
 &=\deg\cL-\deg(R^1f_*\cO_X)+\ell.
\end{split}
\end{align}
Both terms on the last line are nonnegative. Hence $\ell=0$ and
$\deg\cL=\deg(R^1f_*\cO_X)$. Since $\cT=0$, we have $R^2f_*\cO_X=\cL$, so $R^2f_*\cO_X$ is locally free.
The cup product morphism
$\Lambda^2(R^1f_*\cO_X)\to R^2f_*\cO_X$ is a nonzero homomorphism of line bundles
of the same degree, and is therefore an isomorphism.
This proves \eqref{cupiso}.

Since $R^jf_*\cO_X$ is locally free for $j=0,1,2$ and
$R^3f_*\cO_X=0$, Lemma~\ref{bcgeneral} shows that the
base-change maps are isomorphisms in degrees zero, one,
and two. The fiber cohomology dimensions are therefore the corresponding
ranks, proving \eqref{fiberh}.
\end{proof}

\begin{remark}
The general local-freeness claim in \cite[Lemma~1.5]{CDP98} is
false, as already pointed out by Lehn--Rollenske--Schinko
\cite[Remark~3.7]{LRS}. The proof of \cite[Proposition~5.1]{CDP20} is also incomplete:
in the nonreduced-fiber case it assumes $h^0(X_y,\omega_{X_y})=1$,
which is the equality to be proved. Theorem~\ref{raynaud} and the
argument above establish the local freeness needed here independently.
\end{remark}

%%%%%%%%%%%%%%%%%%%%%%%%%%%%%%%%%%
\section{The splitting types}\label{coherent}
In the setting of Section~\ref{ray}, 
we know that $R^1 f_* \mathcal{O}_X = \mathcal{O}(E)$,
where $E$ is a holomorphic vector bundle on the base. 
The following lemma relates the integral cohomology of the fibers
to holomorphic sections of $E$.
\begin{lemma}\label{nowhere}
Let $f:X\to B$ be a surjective holomorphic map with connected fibers
from a complex manifold diffeomorphic to $S^6$ to a smooth connected curve.
Then $B\simeq\PP^1$ and $H^0(B,R^1f_*\ZZ)\simeq\ZZ$.
Let $E$ be the holomorphic vector bundle with
$\cE=R^1f_*\cO_X=\cO(E)$. Every nonzero section of $R^1f_*\ZZ$
maps to a nowhere-vanishing section of $E$. In particular, $E$ admits
such a section.
\end{lemma}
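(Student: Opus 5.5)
The plan has three steps: identify the base, compute $H^0(B,R^1f_*\ZZ)$ from the Leray spectral sequence, and show the image section in $E$ vanishes nowhere.

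\emph{The base is $\PP^1$.} $X$ is compact, so $B=f(X)$ is a compact Riemann surface. Since the fibers are connected, $f_*:\pi_1(X)\to\pi_1(B)$ is surjective. As $\pi_1(S^6)=1$, we get $\pi_1(B)=1$, hence $B\simeq\PP^1$.

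\emph{Computing $H^0(B,R^1f_*\ZZ)$.} Use the Leray spectral sequence $E_2^{p,q}=H^p(B,R^qf_*\ZZ)\Rightarrow H^{p+q}(X,\ZZ)$. Because the fibers are connected, $f_*\ZZ=\ZZ$, so $E_2^{p,0}=H^p(\PP^1,\ZZ)$. The low-degree exact sequence reads
\[
0\to H^1(B,\ZZ)\to H^1(X,\ZZ)\to H^0(B,R^1f_*\ZZ)\xrightarrow{d_2}H^2(B,\ZZ)\to \ker\bigl(H^2(X,\ZZ)\to E_2^{0,2}\bigr).
\]
Here $H^1(X,\ZZ)=H^2(X,\ZZ)=0$ and $H^2(B,\ZZ)\simeq\ZZ$. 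Hence $d_2:H^0(B,R^1f_*\ZZ)\to\ZZ$ is an isomorphism, giving $H^0(B,R^1f_*\ZZ)\simeq\ZZ$. The exactness claim at $H^2(B,\ZZ)$ uses only that the edge map $H^2(B)\to H^2(X)$ has kernel equal to the image of $d_2$, together with $E_2^{1,1}$ and $E_2^{0,2}$ not interfering. That holds because $E_\infty^{2,0}\subset H^2(X)=0$, so $E_2^{2,0}$ is killed entirely by $d_2$.

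\emph{The section is nowhere zero.} Let $\gamma$ be a nonzero section of $R^1f_*\ZZ$. It maps via $\ZZ\to\cO_X$ to a section $s$ of $R^1f_*\cO_X=\cO(E)$, which is locally free by Theorem~\ref{raynaud}. The fiber of $E$ at $y$ is identified with $H^1(X_y,\cO_{X_y})$ by Lemma~\ref{bcgeneral}, using local freeness of $R^1$ and $R^2$ (Theorem~\ref{allfree} in the torus setting; in general one needs only that the base-change map $\beta^1_y$ is injective, which always holds). By proper base change, $(R^1f_*\ZZ)_y=H^1(X_y,\ZZ)$, and $s(y)$ is the image of $\gamma_y$ under $H^1(X_y,\ZZ)\to H^1(X_y,\cO_{X_y})$.

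I will argue in two cases.

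\begin{itemize}
\item[(a)] \emph{Smooth fibers $y\in B'$.} Here $X_y$ is a compact Kähler (torus), or in general compact connected, manifold. The map $H^1(X_y,\ZZ)\to H^1(X_y,\cO)$ is injective modulo torsion: its kernel consists of classes represented by holomorphic $1$-forms $\alpha$ whose conjugate classes vanish. This forces a real integral class to be of the form $\alpha+\bar\alpha$ with $\bar\alpha$ exact, hence $0$. In particular this applies to torus fibers. Global sections of $R^1f_*\ZZ$ are torsion-free since $\ZZ$ is, so $\gamma_y\neq0$ for all $y$: a section of a local system vanishing at one point of the connected open set vanishes on it. The fibers may not be Kähler in general, so a non-Kähler alternative is needed. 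For that I would use the exponential sequence $H^1(X_y,\ZZ)\to H^1(\cO)\to \mathrm{Pic}$: the kernel of $H^1(\ZZ)\to H^1(\cO)$ is the image of $H^0(\cO^*)=\CC^*$ under the connecting map, which is $0$ because $H^0(\cO_{X_y})=\CC$ by \eqref{raygoal}. This argument works for all fibers, singular and nonreduced included, applied to the analytic space $X_y$.

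\item[(b)] \emph{Singular fibers.} The same exponential-sequence argument on $X_y$ gives injectivity of $H^1(X_y,\ZZ)\to H^1(X_y,\cO_{X_y})$, since $H^0(X_y,\cO^*_{X_y})=\CC^*$ by \eqref{raygoal}. Here $H^0$ of units is $\CC^*$ times $1+$nilpotents, and $\exp$ is surjective onto these.
\end{itemize}

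The main obstacle is showing $\gamma_y\neq0$ in $H^1(X_y,\ZZ)$ at a critical value. By proper base change, $(R^1f_*\ZZ)_y\simeq H^1(X_y,\ZZ)\simeq H^1(f^{-1}(U),\ZZ)$ for a small disc $U$. The restriction of a global section to the stalk is injective because $R^1f_*\ZZ$ has no sections supported at points. To see this, note that the stalk maps injectively into the monodromy invariants on a punctured neighborhood: $H^1(f^{-1}U)\to H^1(f^{-1}U^*)$ is injective, since the kernel comes from $H^1(f^{-1}U,f^{-1}U^*)$, which by Thom isomorphism/excision is $H^{-1}$ of the fiber, that is zero. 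Hence a section nonzero generically is nonzero at every stalk, and by the injectivity in (b), $s(y)\neq0$ everywhere.
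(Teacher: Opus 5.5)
Your base and Leray steps are fine; the $\pi_1$-surjectivity route to $B\simeq\PP^1$ validly replaces reading $H^1(B,\ZZ)\hookrightarrow H^1(X,\ZZ)=0$ off the five-term sequence. The fiberwise injectivity $H^1(X_y,\ZZ)\hookrightarrow H^1(X_y,\cO_{X_y})$, from the exponential sequence, $H^0(X_y,\cO_{X_y})=\CC$ and injectivity of $\beta^1_y$, is what the paper does.

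The gap is in the step you flag as the main obstacle: showing $\gamma_y\neq0$ at every $y$, i.e.\ that $R^1f_*\ZZ$ has no nonzero sections supported on $\Sigma$. Everything hinges on it, including the case $y\in B'$, since there too you need $\gamma$ not supported on $\Sigma$. You need the map from the stalk $(R^1f_*\ZZ)_y\simeq H^1(f^{-1}U,\ZZ)$ to $H^0(U^*,R^1f_*\ZZ)$ to be injective. What you actually prove is injectivity of $H^1(f^{-1}U,\ZZ)\to H^1(f^{-1}U^*,\ZZ)$, which is weaker. The Leray sequence over $U^*$ gives
\[
0\to H^1(U^*,\ZZ)\to H^1(f^{-1}U^*,\ZZ)\to H^0(U^*,R^1f_*\ZZ)\to 0,
\]
so the pulled-back angular class $f^*[d\theta/2\pi]$ dies in the monodromy invariants. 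Nothing in your argument excludes a class in $H^1(f^{-1}U,\ZZ)$ restricting to a nonzero multiple of $f^*[d\theta/2\pi]$, and such a class is exactly a section of $R^1f_*\ZZ$ supported at $y$. Separately, the Thom isomorphism is not available for a singular or nonreduced fiber. The vanishing of $H^1(f^{-1}U,f^{-1}U^*)$ should instead come from duality, using only that $X_y$ is compact of real codimension two.

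One topological fix is as follows. Suppose $c\in H^1(f^{-1}U,\ZZ)$ restricts to $k\,f^*[d\theta/2\pi]$. Take a small disc meeting a component of $X_y$ of multiplicity $m_i$ transversally at a general point, where $t=z_1^{m_i}$, and evaluate both classes on its boundary loop $\mu$. Since $\mu$ bounds in $f^{-1}U$, you get $0=c(\mu)=k\,m_i$, so $k=0$.

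The paper avoids local topology entirely. Push forward the exponential sequence and use $f_*\cO_X=\cO_B$, $f_*\cO_X^*=\cO_B^*$ to get an injection of sheaves $\alpha:R^1f_*\ZZ\hookrightarrow\cE$. If $\gamma_y=0$, then $\alpha(\gamma)$ vanishes near $y$. By the identity theorem for sections of the locally free $\cE$, it vanishes on all of $B$, so $\gamma=0$. This treats critical and regular values uniformly and uses only the local freeness you already invoke.
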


\begin{proof}
The curve $B$ is compact because $f$ is surjective.
Connectedness gives $f_*\ZZ=\ZZ$.
The five-term integral Leray sequence
(see \cite[p.~554, (1.7)]{HVF}) is
\begin{align*}
0 \longrightarrow H^1(B,\ZZ) \longrightarrow H^1(X,\ZZ)
\longrightarrow H^0(B,R^1f_*\ZZ)
 \xrightarrow{d_2}H^2(B,\ZZ)\longrightarrow H^2(X,\ZZ).
\end{align*}
Since $H^1(X,\ZZ)=0$, we see that $B\simeq\PP^1$,
and since also $H^2(X,\ZZ)=0$, the map
\begin{align}\label{transgression}
 H^0(\PP^1,R^1f_*\ZZ)\xrightarrow{\ d_2\ }
 H^2(\PP^1,\ZZ)\simeq\ZZ
\end{align}
is an isomorphism.

Write $\mathcal{Z}=R^1f_*\ZZ$. Use the exponential sequence
\begin{align}\label{exp}
0\longrightarrow\ZZ_X\longrightarrow\cO_X
 \xrightarrow{\ \exp(2\pi i\,\cdot)\ }\cO_X^*
 \longrightarrow1.
\end{align}
We have $f_*\cO_X=\cO_B$ and $f_*\cO_X^*=\cO_B^*$.
For the latter claim, apply $f_*\cO_X=\cO_B$ to a
nowhere-vanishing holomorphic function and its inverse.
The exponential $\cO_B\to\cO_B^*$ is surjective as a map of
sheaves, since units admit logarithms locally on $B$.
The direct-image sequence therefore yields an injection of
sheaves of abelian groups
\begin{align}\label{ainj}
0\longrightarrow\mathcal{Z}\xrightarrow{\alpha}\cE.
\end{align}
Choose $0\ne\tau\in H^0(B,\mathcal{Z})$ and put $s=\alpha(\tau)$.

First $\tau_y\ne0$ as a germ at every $y\in B$. Otherwise $\tau$
and hence $s$ would vanish on a neighborhood of $y$.
The identity theorem for sections of the vector bundle $E$ on
connected $B$ would give $s=0$, contradicting \eqref{ainj}.

Let $t$ be a local parameter at $y$, and put
$E(y)=\cE_y/t\cE_y$.
Proper base change for the constant sheaf gives
\begin{align}\label{constantbc}
 \mathcal{Z}_y\simeq H^1(X_y,\ZZ).
\end{align}
Here the topology ignores nilpotents. Equivalently, use the
deformation-retraction neighborhoods of the fiber as in
\cite[Proposition~3.1]{HVF}. On the full analytic fiber the
exponential sequence is still exact: a unit has a logarithm locally,
including its nilpotent part; see \cite[Lemma~54.3]{KaupKaup}.
Since
$H^0(X_y,\cO_{X_y})=\CC$ and
$H^0(X_y,\cO_{X_y}^*)=\CC^*$, its cohomology sequence gives an injection
\begin{align}\label{fiberexp}
 \alpha_y^F:H^1(X_y,\ZZ)\hookrightarrow H^1(X_y,\cO_{X_y}).
\end{align}

In the following diagram, the square commutes,
the two vertical maps are injective, and the top row is exact:
\begin{equation}\label{board}
\begin{tikzcd}[ampersand replacement=\&,column sep=1.5em,row sep=1.5em]
 \cE_y\arrow[r,"t"]\&\cE_y\arrow[r,"\mathrm{res}"]\&
 H^1(X_y,\cO_{X_y})\arrow[r]\&(R^2f_*\cO_X)_y[t]\arrow[r]\&0\\
 \&\mathcal{Z}_y\arrow[u,"\alpha"]\arrow[r,"\simeq"']\&
 H^1(X_y,\ZZ)\arrow[u,"\alpha_y^F"']
\end{tikzcd}
\end{equation}

By Lemma~\ref{bcgeneral}, the restriction arrow factors as
\begin{align}
 \cE_y\longrightarrow E(y)
 \xrightarrow[\text{injective}]{\ \beta_y^1\ }
 H^1(X_y,\cO_{X_y}).
\end{align}
Thus
$\beta_y^1(s(y))=\alpha_y^F(\tau_y)$.
The right-hand side is nonzero by \eqref{constantbc},
\eqref{fiberexp}, and $\tau_y\ne0$. Hence $s(y)\ne0$.
Since $y$ was arbitrary, $s$ is nowhere vanishing.
\end{proof}

\subsection{The splitting types}
Under the assumptions of Theorem~\ref{allfree}, Birkhoff--Grothendieck gives
\begin{align}\label{splittings}
 R^1f_*\cO_X=\cO_{\PP^1}(a)\oplus\cO_{\PP^1}(b),\quad a\leq b,\qquad
 R^2f_*\cO_X=\cO_{\PP^1}(c).
\end{align}
The Leray isomorphism \eqref{transgression} and
Lemma~\ref{nowhere} give a nowhere-vanishing section $s$ of $R^1f_*\cO_X$. Wedge product with $s$ and
\eqref{cupiso} give
\begin{align}\label{bundleextension}
0\longrightarrow\cO_{\PP^1}\xrightarrow{s}R^1f_*\cO_X
 \xrightarrow{s\smile-}R^2f_*\cO_X\longrightarrow0.
\end{align}

\begin{proposition}\label{degrees}
Under the same assumptions as in Theorem~\ref{allfree},
the integers in \eqref{splittings} satisfy
\begin{align}\label{degconstraints}
 c=a+b,\qquad c\geq-1.
\end{align}
Moreover, either $0\leq a\leq b$, or $(a,b)=(-1,0)$.
There are natural isomorphisms
\begin{align}
 H^1(X,\cO_X)\simeq H^0(\PP^1,R^1f_*\cO_X), \quad
 H^2(X,\cO_X) \simeq H^0(\PP^1,R^2f_*\cO_X),
\end{align}
and
\begin{align}\label{globalhodge}
 h^{0,1}(X)=a + b +2,\qquad h^{0,2}(X)= a + b +1,\qquad h^{0,3}(X)=0.
\end{align}
\end{proposition}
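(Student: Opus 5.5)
The plan is to combine \eqref{cupiso}, the extension \eqref{bundleextension}, and the Leray spectral sequence, together with one extra topological input: the vanishing $h^{0,3}(X)=0$. This vanishing comes from $H^3(X,\CC)=0$.

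\textbf{Step 1: $c=a+b$.} By \eqref{cupiso}, $\cO_{\PP^1}(c)\simeq\Lambda^2(\cO(a)\oplus\cO(b))=\cO(a+b)$, so $c=a+b$. This also follows by taking determinants in \eqref{bundleextension}.

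\textbf{Step 2: the Leray spectral sequence.} Next I would run the Leray spectral sequence $E_2^{p,q}=H^p(\PP^1,R^qf_*\cO_X)\Rightarrow H^{p+q}(X,\cO_X)$. Since the base is a curve, $E_2^{p,q}=0$ for $p\geq2$, so every differential $d_r$ with $r\geq2$ vanishes and the sequence degenerates at $E_2$. By Theorem~\ref{allfree}, $R^qf_*\cO_X=0$ for $q\ge3$ and the $R^qf_*\cO_X$ are locally free. Hence
\begin{align*}
 H^1(X,\cO_X)&\simeq H^0(R^1f_*\cO_X),\quad\text{using } H^1(\PP^1,\cO)=0,\\
 0\to H^1(\PP^1,R^1f_*\cO_X)\to H^2(X,\cO_X)&\to H^0(\PP^1,R^2f_*\cO_X)\to0,\\
 H^3(X,\cO_X)&\simeq H^1(\PP^1,\cO(c)).
\end{align*}

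\textbf{Step 3: $h^{0,3}(X)=0$, hence $c\geq-1$.} This is the key input and the main obstacle, since nothing in the fibration alone bounds $c$ from below. By Serre duality, $h^{0,3}(X)=h^0(X,K_X)$. Let $\omega$ be a holomorphic $3$-form. Then $\partial\omega$ is a $(4,0)$-form, so it vanishes, and $\bar\partial\omega=0$; thus $d\omega=0$. Because $H^3(X,\CC)=0$, we can write $\omega=d\eta$. Stokes then gives
\begin{align*}
 \int_X\omega\wedge\bar\omega=\int_X d(\eta\wedge\bar\omega)=0,
\end{align*}
using $d\bar\omega=0$. But $\int_X\omega\wedge\bar\omega$ is a nonzero constant multiple of $\int_X|\omega|^2\,dV$, so $\omega=0$. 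Therefore $h^{0,3}=0$, which gives $H^1(\PP^1,\cO(c))=0$, i.e.\ $c\geq-1$.

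\textbf{Step 4: the dichotomy.} By Lemma~\ref{nowhere} and \eqref{transgression}, $\cO(a)\oplus\cO(b)$ has a nowhere-vanishing section $s$.
\begin{itemize}
\item If $b<0$, there are no nonzero sections at all, a contradiction.
\item If $a<0\le b$, the section $s$ lies in the summand $\cO(b)$. A nowhere-vanishing section of a line bundle forces $b=0$. Then $c=a\ge-1$ gives $(a,b)=(-1,0)$.
\item Otherwise $0\le a\le b$.
\end{itemize}

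\textbf{Step 5: the Hodge numbers.} In both cases $a,b\ge-1$, so $H^1(\PP^1,R^1f_*\cO_X)=0$. Step 2 then yields the claimed natural isomorphisms, and
\begin{align*}
 h^{0,1}&=(a+1)+(b+1)=a+b+2,\\
 h^{0,2}&=h^0(\cO(c))=c+1=a+b+1,\\
 h^{0,3}&=0.
\end{align*}
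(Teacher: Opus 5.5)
Your proof is correct and follows the paper's argument step for step: $c=a+b$ from \eqref{cupiso}, degeneration of the coherent Leray spectral sequence over the curve, $H^3(X,\cO_X)=0$ forcing $c\geq-1$, the nowhere-vanishing section giving the dichotomy, and Riemann--Roch on $\PP^1$ for the Hodge numbers. The only difference is that you prove $H^0(X,K_X)=0$ yourself: a holomorphic $3$-form is closed, hence exact because $b_3=0$, and Stokes then forces $\int_X\omega\wedge\bar\omega=0$. The paper instead cites Angella for this vanishing.
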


\begin{proof}
Equation \eqref{cupiso} gives $c=a+b$.
The coherent Leray spectral sequence is
\begin{align*}
 E_2^{p,q}=H^p(\PP^1,R^qf_*\cO_X)
 \ \Longrightarrow\ H^{p+q}(X,\cO_X).
\end{align*}
Since the base is a curve and the direct images are coherent,
only the columns $p=0,1$ can be nonzero. Every differential from
$E_2$ onward therefore vanishes. The resulting filtration gives,
for $j\geq1$, natural exact sequences
\begin{align*}
 0\longrightarrow H^1(\PP^1,R^{j-1}f_*\cO_X)
 \longrightarrow H^j(X,\cO_X)
 \longrightarrow H^0(\PP^1,R^jf_*\cO_X)\longrightarrow0.
\end{align*}
Using $f_*\cO_X=\cO_{\PP^1}$ and $R^3f_*\cO_X=0$, we obtain the isomorphisms
\begin{align}
\label{e:lerayiso}
 H^1(X,\cO_X) \simeq H^0(\PP^1,R^1f_*\cO_X), \quad
 H^3(X,\cO_X) \simeq H^1(\PP^1,R^2f_*\cO_X),
\end{align}
and the exact sequence
\begin{align}
\label{e:lerayseq}
 0\longrightarrow H^1(\PP^1,R^1f_*\cO_X)\longrightarrow H^2(X,\cO_X)
 \longrightarrow H^0(\PP^1,R^2f_*\cO_X)\longrightarrow0.
\end{align}

For a complex structure on $S^6$, the vanishing $H^0(X,K_X)=0$
is discussed in \cite[Section~2.1]{Angella}. Serre duality therefore
gives $H^3(X,\cO_X)=0$.
The second Leray isomorphism in~\eqref{e:lerayiso} therefore gives
$H^1(\PP^1,\cO_{\PP^1}(c))=0$, equivalently
$c\geq-1$.

If $a<0$, every section of $R^1f_*\cO_X$ lies in the second summand
$\cO_{\PP^1}(b)$. A nowhere-vanishing section forces $b=0$, and
$-1\leq c=a<0$ gives $a=-1$. If $a\geq0$, both degrees are nonnegative.

In both alternatives $H^1(\PP^1,R^1f_*\cO_X)=0$, since
$H^1(\PP^1,\cO_{\PP^1}(k))=0$ for $k\geq-1$. The exact sequence \eqref{e:lerayseq}
therefore gives $H^2(X,\cO_X)\simeq H^0(\PP^1,R^2f_*\cO_X)$.
Riemann--Roch on $\PP^1$ now yields
\begin{align*}
 h^0(\PP^1,R^1f_*\cO_X)&=\chi(\PP^1,R^1f_*\cO_X)=a+b + 2,\\
 h^0(\PP^1,R^2f_*\cO_X)&=\chi(\PP^1,R^2f_*\cO_X)=a + b +1.
\end{align*}
Together with $H^3(X,\cO_X)=0$, these prove \eqref{globalhodge}.
\end{proof}

\begin{remark}
The  Alp\"oge--Claude complex structures on $S^6$ belong to the second case, $(a,b)=(-1,0)$; this is proved below in Proposition~\ref{ACHodge}. The purpose of
this paper is to explore the first case, $0\leq a\leq b$, and to
construct examples with $a+b$ arbitrarily large. Both of these cases were 
claimed to be excluded in
\cite[Theorem~2.2 and Corollary~2.3]{CDP20}; the examples below show
that their exclusion of the nonnegative splitting case is also incorrect.
\end{remark}

%%%%%%%%%%%%%%%%%%%%%%%%%%%%%%%%
\section{Period bundles}\label{s:periodbundles}

By the \emph{Hodge plane} of a complex two-torus $T$,
we mean the two-dimensional complex subspace
$H^{1,0}(T)\subset H^1(T,\CC)$ consisting of cohomology classes
represented by holomorphic $1$-forms. In this section, we describe the local
models near the critical fibers and extend the cohomology and Hodge
bundles over the whole base. We also discuss extensions of the natural tautological and quotient bundles.  

\subsection{The local fillings}\label{localmodels}
Let $f_0:X_0\to B=\PP^1$ be as in Theorem~\ref{t:main}. Let
$\Sigma\ne\varnothing$ be its set of critical values and put $B'=B\setminus\Sigma$.
For precision, the two types of analytic neighborhoods are described
below. At the hexagonal fiber we begin with multiplicative periods
and then take logarithms to describe the Hodge planes.

\smallskip
\noindent\emph{Case (i): the hexagonal translation quotient.}
The punctured fibers have the multiplicative description
\begin{align}\label{units}
 (\CC^*)^2/\langle(tu_{11}(t),u_{21}(t)),(u_{12}(t),tu_{22}(t))\rangle,
\end{align}
where the $u_{ij}$ are holomorphic units. The filling uses the
standard $\ZZ^2$-periodic unimodular triangulation of $\RR^2$.

Choose a disk $\Delta$ with coordinate $t$, centered at the
critical value. Each unit $u_{ij}$ in \eqref{units} has a
holomorphic logarithm on this disk. Set
\begin{align*}
 \ell(t)=\frac{\log t}{2\pi i},\qquad
 H(t)=\frac{1}{2\pi i}
 \begin{pmatrix}
  \log u_{11}(t)&\log u_{12}(t)\\
  \log u_{21}(t)&\log u_{22}(t)
 \end{pmatrix}=(h_{ij}(t)).
\end{align*}
Taking logarithms of the multiplicative periods gives
\begin{align}\label{tau}
 \tau(t)=\ell(t)I_2+H(t)
 =\begin{pmatrix}
 \ell(t)+h_{11}(t)&h_{12}(t)\\
 h_{21}(t)&\ell(t)+h_{22}(t)
 \end{pmatrix}.
\end{align}
The matrix $H$ is holomorphic on $\Delta$, whereas $\ell$
is single-valued only after choosing a branch on $\Delta^*$.
Writing $z_i=\exp(2\pi i v_i)$ in the multiplicative description
\eqref{units}, the angular loops give the two standard periods in
$\CC^2$, and the two multiplicative identifications give the
columns of $\tau$. Thus the marked fiber is
$\CC^2/(\ZZ^2+\tau(t)\ZZ^2)$.

Let $A_1,A_2$ be the angular homology classes, and let
$B_1,B_2$ be the classes corresponding to the columns of
$\tau$. On a simply connected part of $\Delta^*$, these give
a flat integral basis of homology. Let
$\alpha_1,\alpha_2,\beta_1,\beta_2$ be the dual cohomology basis.
The holomorphic forms $\omega_i=dv_i$ satisfy
\begin{align*}
 \int_{A_j}\omega_i=\delta_{ij},\qquad
 \int_{B_j}\omega_i=\tau_{ij}(t).
\end{align*}
Since a cohomology class is determined by these four periods,
\begin{align}\label{hodgeperiodrows}
 [\omega_i]=\alpha_i+\sum_{j=1}^2\tau_{ij}(t)\beta_j,
 \qquad i=1,2.
\end{align}
Thus the Hodge plane is the row span of $(I_2,\tau(t))$
in this cohomology basis.

Going once counterclockwise around $t=0$ changes $\ell$ to
$\ell+1$, so the transported homology basis changes by
\begin{align*}
 A_i\longmapsto A_i,\qquad B_i\longmapsto B_i+A_i.
\end{align*}
The transported dual cohomology basis therefore changes by
\begin{align*}
 \alpha_i\longmapsto\alpha_i-\beta_i,\qquad
 \beta_i\longmapsto\beta_i.
\end{align*}

The filling is the hexagonal quotient described in
Subsection~\ref{hex}. Its central fiber has normalization $dP_6$,
with opposite boundary curves identified.

\smallskip
\noindent\emph{Case (ii): the finite cyclic quotients of orders three and four.}
The multiple fibers have multiplicity $3$ or $4$.
Write each local model as the free quotient of a smooth family
$g:\widetilde X\to\Delta_w$ of two-tori by
$G=\langle\gamma\rangle$, where $|G|=n\in\{3,4\}$,
$\gamma$ covers $w\mapsto\zeta w$,
$\zeta=\exp(2\pi i/n)$, and $t=w^n$.
Let $\pi:\Delta_w\to\Delta_t$ be this map. 
The action is free on the total space, including the central torus.
Thus the quotient total space is smooth, while its central fiber
has multiplicity $n$; in the models considered here its reduction
is bielliptic. Using a flat identification of the cohomology of the
covering fibers, the action from the fiber over $w$ to the fiber over
$\zeta w$, inverse to pullback by $\gamma$, has eigenvalues
$1,1,\zeta,\zeta^{-1}$ on $H^1$ in both cases
\cite[Proposition~3.4]{Engel}.
The affine translation is part of this local model,
even though it is invisible in the action on cohomology.

\subsection{Periods}
\label{ss:period}
For the classical description of complex tori by lattices and period
matrices, see Griffiths--Harris \cite[Chapter~2, Section~6]{GH}.
Write $X'_0=f_0^{-1}(B')$ and $f'_0=f_0|_{X'_0}:X'_0\to B'$.
We use primes for the local systems, bundles, and period spaces
defined over $B'$; their extensions over $B$ will be unprimed.
Let $\mathcal{Z}'=R^1f'_{0*}\ZZ$ be the rank-four cohomology local
system on $B'$, and let
$\Lambda'=\operatorname{Hom}(\mathcal{Z}',\ZZ_{B'})$ be its dual
homology local system. Thus the fiber of $\Lambda'$ at $b$ is
$H_1(X_{0,b},\ZZ)$. Write
$\Lambda'_{\RR}=\Lambda'\otimes_{\ZZ}\RR$. Put
\begin{align*}
 \cH'=\mathcal{Z}'\otimes_{\ZZ}\OO_{B'}.
\end{align*}
This is a rank-four locally free sheaf. We use the same notation for
its associated rank-four holomorphic vector bundle. It is a flat
bundle, with its \emph{Gauss--Manin connection}
\begin{align*}
 \nabla':\cH'\longrightarrow\cH'\otimes\Omega_{B'}^1.
\end{align*}
For a holomorphic vector bundle $\cE$, write $\cE(b)$ for its
vector space fiber at $b$, rather than the stalk of its sheaf of
sections. In sheaf notation this fiber is
$\cE_b/\mathfrak m_b\cE_b$. Thus
\begin{align*}
 \cH'(b)=\mathcal{Z}'_b\otimes_{\ZZ}\CC
 =H^1(X_{0,b},\CC),\qquad b\in B'.
\end{align*}
The real local system is $\mathcal{R}'=\mathcal{Z}'\otimes_{\ZZ}\RR$.
Its fiber is $H^1(X_{0,b},\RR)$, and
$\cH'(b)=\mathcal{R}'_b\otimes_{\RR}\CC$. Complex conjugation on
this complexification fixes $\mathcal{R}'_b$ pointwise: it sends
$v\otimes z$ to $v\otimes\overline z$. The integral transition
matrices commute with conjugation, so this defines a fiberwise
conjugation independently of a local marking.

A period section is a rank-two holomorphic subbundle
$\cF'\subset\cH'$ such that
\begin{align}\label{D}
 \cF'(b)\cap\overline{\cF'(b)}=0
 \quad\hbox{in }\cH'(b),\qquad b\in B'.
\end{align}
Here the bar means the conjugation just defined. Equivalently,
$\cH'(b)=\cF'(b)\oplus\overline{\cF'(b)}$.
Define the Grassmannian bundle and its open period domain bundle by
\begin{align*}
 p':Y'&=\Gr_{B'}(2,\cH')\longrightarrow B',\\
 D'&=\bigcup_{b\in B'}D'_b\subset Y',\qquad
 D'_b=\{W\in\Gr(2,\cH'(b)):W\cap\overline W=0\}.
\end{align*}
Thus $\cF'$ is equivalently a holomorphic section
$\sigma':B'\to Y'$ with image in $D'$.
The original family determines the Hodge subbundle $\cF'_0$, with
$\cF'_0(b)=H^{1,0}(X_{0,b})$, and the corresponding section
$\sigma'_0$. The natural map from complex cohomology to
$\OO$-cohomology gives the holomorphic bundle isomorphism
\begin{align}\label{hodgequotient}
 R^1f'_{0*}\OO_{X'_0}\simeq\cH'/\cF'_0.
\end{align}
Fiberwise, this is $H^1(X_{0,b},\CC)/H^{1,0}(X_{0,b})
\simeq H^1(X_{0,b},\OO_{X_{0,b}})$.

\begin{proposition}\label{periodextension}
Under the local hypotheses of Subsection~\ref{localmodels}, the
bundle $\cH'$ extends to a rank-four holomorphic vector bundle
$\cH$ on $B$, and $\cF'_0$ extends to a rank-two holomorphic
subbundle $\cF_0\subset\cH$. The Gauss--Manin connection
$\nabla'$ extends as a meromorphic connection
\begin{align*}
 \nabla:\cH\longrightarrow\cH\otimes\Omega_B^1(\Sigma)
\end{align*}
with at most logarithmic poles along $\Sigma$. Consequently
\begin{align*}
 p:Y=\Gr_B(2,\cH)\longrightarrow B
\end{align*}
extends $p':Y'\to B'$, and $\sigma'_0$ extends to the holomorphic
section $\sigma_0:B\to Y$ defined by $\cF_0$. Furthermore, the smooth fivefold $Y$ is projective. 
\end{proposition}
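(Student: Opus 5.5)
The plan is to build each extension locally near every point $q\in\Sigma$ and then glue it to the data over $B'$. The natural choice is Deligne's canonical extension, done by hand in the explicit frames that the local models of Subsection~\ref{localmodels} supply. Since $\Sigma$ is finite, it is enough to produce, on a small disk $\Delta$ about each $q$, a holomorphic frame of $\cH'|_{\Delta^*}$. We want three properties of this frame: (a) the connection matrix is $A(t)\,dt/t$ with $A$ holomorphic; (b) $\cF'_0|_{\Delta^*}$ is spanned by two sections that extend holomorphically over $t=0$ and are pointwise independent there; (c) any two such frames over overlapping sets differ by a holomorphic invertible matrix on $\Delta^*$. Gluing these local extensions to $\cH'$ then gives $\cH$ with a logarithmic $\nabla$. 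The saturated span from (b) gives the subbundle $\cF_0$, and the Grassmannian bundle and section $\sigma_0$ extend accordingly.

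\emph{Case (i).} The monodromy is unipotent: $T=\begin{pmatrix} I&0\\ -I& I\end{pmatrix}$ in the basis $(\alpha,\beta)$, so $N=\log T$ satisfies $N\alpha_i=-\beta_i$, $N\beta_i=0$. Take the twisted frame $\tilde e=\exp(-\ell(t)N)e$, where $e$ is the multivalued flat frame. Concretely, $\tilde\alpha_i=\alpha_i+\ell(t)\beta_i$ and $\tilde\beta_i=\beta_i$. These sections are single valued on $\Delta^*$, and
\begin{align*}
\nabla\tilde\alpha_i=\frac{1}{2\pi i}\frac{dt}{t}\,\beta_i,\qquad \nabla\tilde\beta_i=0,
\end{align*}
which is logarithmic with nilpotent residue. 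By \eqref{hodgeperiodrows} and \eqref{tau},
\begin{align*}
[\omega_i]=\alpha_i+\ell\beta_i+\sum_j h_{ij}(t)\beta_j=\tilde\alpha_i+\sum_j h_{ij}(t)\tilde\beta_j.
\end{align*}
The $\log t$ terms cancel exactly, so $\cF'_0$ is the row span of $(I_2,H(t))$ with $H$ holomorphic on $\Delta$. This extends as a rank-two subbundle, since the $I_2$ block keeps it of rank two at $t=0$.

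\emph{Case (ii).} Work on the cover $\Delta_w$. The family $\widetilde X\to\Delta_w$ is smooth, so $R^1g_*\CC\otimes\OO_{\Delta_w}$ is trivial with flat frame $e$, and $\cF_w=H^{1,0}$ is an honest holomorphic subbundle over all of $\Delta_w$. The monodromy downstairs is semisimple with eigenvalues $1,1,\zeta,\zeta^{-1}$. Diagonalize it as $e_1,\dots,e_4$ with exponents $0,0,1/n,(n-1)/n$, and use the canonical frame $\tilde e_k=t^{\lambda_k}e_k$ with $\lambda_k\in[0,1)$. The connection becomes $\diag(\lambda_k)\,dt/t$, and pulling back to $\Delta_w$ identifies this frame with $w^{n\lambda_k}e_k$. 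The $G$-invariant holomorphic sections of $\cH_w$ then form exactly the $\OO_{\Delta_t}$-span of the $\tilde e_k$, because a $\gamma$-equivariant holomorphic section in the $\zeta^{j}$-eigenline must be $w^{j}$ times a holomorphic function of $t$. The same averaging principle as in \eqref{invariantR} says that $\cF_0$ should be $(\pi_*\cF_w)^G$, which is a direct summand of the extended $\cH$.

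I expect the main obstacle to be this last point: showing that $(\pi_*\cF_w)^G$ is a rank-two \emph{subbundle}, saturated with torsion-free quotient, and not merely a subsheaf. The eigenvalues of $\gamma$ on $H^{1,0}$ of the central torus could mix invariant and non-invariant eigenlines. To handle this, I would use the fact from \cite[Proposition~3.4]{Engel} that the central fiber's $H^{1,0}$ splits as one invariant line and one $\zeta^{\pm1}$ line. I would then check in the chosen frame that the rows spanning $\cF_0$ have a nonvanishing $2\times2$ minor at $t=0$. Failing that, I would simply take the saturation of $\cF'_0$ in $\cH$, which is automatic on a curve. That is all the proposition requires.

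Finally, for projectivity: $Y=\Gr_B(2,\cH)$ is a smooth Grassmannian bundle over $\PP^1$. The Plücker line bundle $\OO_Y(1)$ from $\Lambda^2\cH$, twisted by $p^*\OO(k)$ for $k\gg0$, is very ample, because $\PP(\Lambda^2\cH^\vee)$ is projective over $\PP^1$. So $Y$ is a smooth projective fivefold.
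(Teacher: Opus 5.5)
Your proposal is correct and follows essentially the same route as the paper: the single-valued logarithmic frame $\alpha_i+\ell(t)\beta_i,\ \beta_i$ with Hodge rows $(I_2,H(t))$ in case~(i); the $G$-invariant pushforward from the branched cover, with frame $c_1,c_2,wc_3,w^{n-1}c_4$ and residues $(0,0,1/n,(n-1)/n)$, in case~(ii); and the relative Pl\"ucker embedding for projectivity. The step you flag as the main obstacle needs neither the eigenline/minor check nor saturation, and this is exactly how the paper concludes: finite pushforward and taking $G$-invariants (by averaging) are exact and preserve local freeness, so applying them to $0\to\widetilde{\cF}_0\to\widetilde{\cH}\to\widetilde{\cH}/\widetilde{\cF}_0\to0$ yields $0\to\cF_0\to\cH\to(\pi_*(\widetilde{\cH}/\widetilde{\cF}_0))^G\to0$ with locally free quotient, which also shows that the invariant pushforward is already saturated, as is needed later in Proposition~\ref{finitequot}.
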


Here $\Omega_B^1(\Sigma)$ allows one-forms with at most simple
poles at $\Sigma$. Thus the bundle $\cH$ is holomorphic across
$\Sigma$, while the connection is allowed to have poles there.
In a local holomorphic frame near $t=0$, it has the form
$d+A(t)\,dt/t$, with $A(t)$ holomorphic at zero; this is the
meaning of ``at most logarithmic poles.'' The proof gives these
matrices explicitly.

\begin{proof}
It suffices to construct the extensions on a small disk about each
critical value, together with their identifications over the
punctured disk.

\smallskip
\noindent\emph{Case (i): the hexagonal translation quotient.}
Use the periods and flat cohomology basis introduced in case~(i)
of Subsection~\ref{localmodels}. On the punctured disk define
\begin{align}\label{logframe}
 e_i=\alpha_i+\ell(t)\beta_i,\qquad f_i=\beta_i,
 \qquad i=1,2.
\end{align}
The two changes cancel:
$(\alpha_i-\beta_i)+(\ell+1)\beta_i
=\alpha_i+\ell\beta_i$. Hence $e_i,f_i$ are single-valued
holomorphic sections of $\cH'|_{\Delta^*}$. They form a frame,
since their change-of-basis matrix from the flat frame is triangular
with identity diagonal blocks.

We now construct the extension, rather than assuming that these
sections already have values at the origin. Take a trivial rank-four
holomorphic bundle on $\Delta$, and identify its standard frame
over $\Delta^*$ with $e_1,e_2,f_1,f_2$. Gluing by this
identification extends $\cH'$ across the origin. Denote the
resulting bundle by $\cH$, and use the same symbols $e_i,f_i$
for its holomorphic frame on the whole disk.
Since the classes $\alpha_i,\beta_i$ are flat,
\begin{align*}
 \nabla e_i=f_i\otimes\frac{dt}{2\pi i t},\qquad
 \nabla f_i=0.
\end{align*}
Thus the connection has at most a logarithmic pole at the origin.

Substituting \eqref{tau} and \eqref{logframe} into
\eqref{hodgeperiodrows} gives
\begin{align}\label{extendedhodgeframe}
 [\omega_i]=e_i+\sum_{j=1}^2 H_{ij}(t)f_j.
\end{align}
The expressions on the right are holomorphic at $t=0$. They are
linearly independent there, and at every other point, because their
$e_1,e_2$ coefficients form the identity matrix. They therefore
span a rank-two subbundle $\cF_0\subset\cH$ extending
$\cF'_0$. More explicitly, in the frame $e_1,e_2,f_1,f_2$
the quotient is the free rank-two bundle given by
\begin{align*}
 \OO_\Delta^2\oplus\OO_\Delta^2\longrightarrow\OO_\Delta^2,
 \qquad (a,b)\longmapsto b-aH(t),
\end{align*}
where $a,b$ are row vectors. Its kernel is exactly the row span
of $(I_2,H(t))$.

\smallskip
\noindent\emph{Case (ii): the finite cyclic quotients.}
We give the details for the order-three point, where $t=w^3$ and
$\zeta=\exp(2\pi i/3)$. Use the covering family $g$ from
Subsection~\ref{localmodels}. On the whole covering disk there are
a rank-four flat holomorphic bundle and a rank-two Hodge subbundle
\begin{align*}
 \widetilde{\cH}=(R^1g_*\CC)\otimes_{\CC}\OO_{\Delta_w},
 \qquad \widetilde{\cF}_0\subset\widetilde{\cH}.
\end{align*}
The affine translations act trivially on cohomology. In a flat frame
$c_1,c_2,c_3,c_4$, the generator acts on $\widetilde{\cH}$ by
\begin{align*}
 \gamma(w,z)=(\zeta w,\rho z),\qquad
 \rho=\diag(1,1,\zeta,\zeta^2).
\end{align*}
These eigenvalues follow from the monodromy calculation in
\cite[Proposition~3.4]{Engel}. Here $\rho$ maps cohomology from the
fiber over $w$ to the fiber over $\zeta w$; it is the inverse of
pullback by the deck map.

Define sheaves on $\Delta_t$ by
\begin{align}\label{invext}
 \cH=(\pi_*\widetilde{\cH})^G,\qquad
 \cF_0=(\pi_*\widetilde{\cF}_0)^G.
\end{align}
Thus sections downstairs are equivariant holomorphic sections on
the covering disk, a correspondence called \emph{descent}.
Over the punctured disk these are precisely $\cH'$ and $\cF'_0$.
The equivariance conditions on a section $\sum_j a_j(w)c_j$ are
\begin{align*}
 a_1(\zeta w)&=a_1(w),& a_2(\zeta w)&=a_2(w),\\
 a_3(\zeta w)&=\zeta a_3(w),&
 a_4(\zeta w)&=\zeta^2a_4(w).
\end{align*}
Expanding in powers of $w$, every such section has the unique form
\begin{align*}
 b_1(t)c_1+b_2(t)c_2+w b_3(t)c_3+w^2b_4(t)c_4,
\end{align*}
where the $b_j$ are holomorphic at $t=0$. Hence $\cH$ has the
holomorphic frame represented upstairs by
\begin{align}\label{finiteframe}
 s_1=c_1,\qquad s_2=c_2,\qquad s_3=wc_3,\qquad s_4=w^2c_4.
\end{align}
Although the last two representatives vanish at $w=0$, they are
free generators of the invariant module. Their classes in its
fiber modulo $t$ are therefore nonzero.

We must also check that $\cF_0$ is a subbundle at $t=0$.
The sheaf $\pi_*\OO_{\Delta_w}$ is free over $\OO_{\Delta_t}$
with basis $1,w,w^2$. After shrinking the disk, every bundle upstairs
is trivial, so its finite pushforward is locally free. Taking
$G$-invariants is exact, by the averaging operator
$(1+\gamma+\gamma^2)/3$, and gives a direct summand of each such
pushforward. Applying these operations to the Hodge exact sequence
upstairs gives
\begin{align*}
 0\longrightarrow\cF_0\longrightarrow\cH\longrightarrow
 \bigl(\pi_*(\widetilde{\cH}/\widetilde{\cF}_0)\bigr)^G
 \longrightarrow0.
\end{align*}
All three terms are locally free, of ranks $2,4,2$, respectively.
Thus $\cF_0\subset\cH$ extends the original Hodge subbundle.

Since the $c_j$ are flat and $t=w^3$, the connection in
\eqref{finiteframe} is
\begin{align*}
 \nabla s_1=\nabla s_2=0,\qquad
 \nabla s_3=\frac13s_3\otimes\frac{dt}{t},\qquad
 \nabla s_4=\frac23s_4\otimes\frac{dt}{t}.
\end{align*}
This has the required logarithmic pole.

For order four, use $t=w^4$, $\zeta=i$, and
$\rho=\diag(1,1,i,-i)$. The same argument gives the frame
\begin{align}\label{finitefour}
 s_1=c_1,\qquad s_2=c_2,\qquad s_3=wc_3,\qquad s_4=w^3c_4,
\end{align}
and the connection is
\begin{align*}
 \nabla s_1=\nabla s_2=0,\qquad
 \nabla s_3=\frac14s_3\otimes\frac{dt}{t},\qquad
 \nabla s_4=\frac34s_4\otimes\frac{dt}{t}.
\end{align*}
The local-freeness argument uses the basis $1,w,w^2,w^3$ and
the averaging operator $(1+\gamma+\gamma^2+\gamma^3)/4$.

In both cases admissibility is checked on the covering disk.
The smooth central torus satisfies
\begin{align*}
 \widetilde{\cF}_0(0)\cap
 \overline{\widetilde{\cF}_0(0)}=0.
\end{align*}
This open condition persists for the lifted deformations in
Subsection~\ref{finitefill}.

Choose the disks around the critical values disjoint. Each local
construction agrees with $(\cH',\cF'_0)$ over its punctured
disk. Gluing along these identifications gives
$\cF_0\subset\cH$ over all of $B$. Taking the relative
Grassmannian gives $Y$, and the subbundle $\cF_0$ gives the
section $\sigma_0$, as claimed.

Finally, $\cH$ is algebraic by GAGA
\cite[Proposition~18]{SerreGAGA}, since $B=\PP^1$ is projective.
The relative Pl\"ucker embedding makes $Y$ projective over $B$,
so $Y$ is projective.

\end{proof}
We next record a little more information in Case~\textup{(i)} which will be needed later. 
\begin{proposition}\label{hexmonodromy}
At a critical value $b_0$ of type~\textup{(i)}, the logarithm
$N$ of the local cohomological monodromy extends to a holomorphic
endomorphism of $\cH$ near $b_0$. Its value $N_{b_0}$ satisfies
\begin{align}\label{hextransverse}
 \ker N_{b_0}=\im N_{b_0},\qquad
 \dim_{\CC}\ker N_{b_0}=2,\qquad
 \cH(b_0)=\cF_0(b_0)\oplus\ker N_{b_0}.
\end{align}
\end{proposition}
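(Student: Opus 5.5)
We work in the logarithmic frame $e_1,e_2,f_1,f_2$ of \eqref{logframe} from the proof of Proposition~\ref{periodextension}, case~(i). The local monodromy $T$ acts on the flat cohomology basis by $\alpha_i\mapsto\alpha_i-\beta_i$ and $\beta_i\mapsto\beta_i$. Hence $T=I+N_0$, where $N_0\alpha_i=-\beta_i$, $N_0\beta_i=0$, and $N_0^2=0$. It follows that $\log T=N_0$ exactly, since the logarithm series $\log(I+N_0)=N_0-N_0^2/2+\cdots$ terminates after one term.

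First we express $N=N_0$ in the holomorphic frame. Because $N$ is a constant endomorphism of the local system, it commutes with the flat structure. We have $N f_i=N\beta_i=0$. Also
\begin{align*}
 N e_i=N\alpha_i+\ell(t)N\beta_i=-\beta_i=-f_i .
\end{align*}
Thus in the frame $e_1,e_2,f_1,f_2$ the matrix of $N$ is constant. It is the nilpotent block matrix sending $e_i\mapsto -f_i$ and $f_i\mapsto 0$. A constant matrix in a holomorphic frame of $\cH$ on the whole disk defines a holomorphic endomorphism of $\cH$ near $b_0$, and this is the claimed extension. As a consistency check, the residue of $\nabla$ computed in the proof of Proposition~\ref{periodextension} is $-N/(2\pi i)$, which is the usual relation between residue and monodromy logarithm. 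Making this sign convention match is the only delicate bookkeeping point.

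Next we read off \eqref{hextransverse} at $t=0$. From the matrix, $\ker N_{b_0}=\Span(f_1(b_0),f_2(b_0))=\im N_{b_0}$, and this space has dimension $2$. For the transversality statement we use \eqref{extendedhodgeframe}: the fiber $\cF_0(b_0)$ is spanned by
\begin{align*}
 e_i(b_0)+\sum_j H_{ij}(0)f_j(b_0),\qquad i=1,2.
\end{align*}
Its projection to $\Span(e_1,e_2)$ along $\Span(f_1,f_2)$ is the identity matrix. Therefore $\cF_0(b_0)\cap\ker N_{b_0}=0$, and by dimension count $\cH(b_0)=\cF_0(b_0)\oplus\ker N_{b_0}$.

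No step is a real obstacle. The main point is to verify that $N$, defined on the punctured disk as the logarithm of the monodromy of the local system, agrees with the constant-matrix endomorphism in the glued extension $\cH$. This holds because the extension was defined by declaring $e_i,f_i$ to be a frame, and $N$ has constant coefficients in that frame.
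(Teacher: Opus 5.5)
Your proof is correct and takes essentially the same approach as the paper: $N=T-I$ because $(T-I)^2=0$, its matrix in the frame \eqref{logframe} is constant with $e_i\mapsto -f_i$ and $f_i\mapsto 0$, and transversality follows from the identity block of the Hodge frame \eqref{extendedhodgeframe}. Your residue consistency check, $\operatorname{Res}\nabla=-N/(2\pi i)$, is accurate but not needed for the argument.
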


\begin{proof}
Use the frame \eqref{logframe} constructed in case~(i) of the
proof of Proposition~\ref{periodextension}, with $t(b_0)=0$.
Let $T$ denote cohomological monodromy in the transported dual
basis $\alpha_i,\beta_i$ used there. Since $(T-I)^2=0$, its
logarithm is $N=\log T=T-I$, and
\begin{align*}
 N\alpha_i=-\beta_i,\qquad N\beta_i=0.
\end{align*}
Thus $N$ extends as a holomorphic endomorphism of $\cH|_\Delta$
with $Ne_i=-f_i$ and $Nf_i=0$. In each fiber,
\begin{align*}
 \ker N=\im N=\Span_{\CC}\{f_1,f_2\}.
\end{align*}
A vector in $\cF_0(0)$ has the form
$\sum_i a_i(e_i(0)+\sum_jH_{ij}(0)f_j(0))$.
If it lies in $\ker N$, its $e_i(0)$ coefficients vanish,
so $a_1=a_2=0$. Thus the limiting plane is transverse to
$\ker N$, and
\begin{align*}
 \cH(0)&=\cF_0(0)\oplus\ker N.\qedhere
\end{align*}
\end{proof}

Proposition~\ref{hexmonodromy} is used in
Subsection~\ref{hex} to write nearby Hodge planes as graphs and to
frame the Hodge quotient by the invariant classes $f_1,f_2$.

\subsection{The tautological and quotient bundles}
\label{tautquot}

Recall that $p':Y'=\Gr_{B'}(2,\cH')\to B'$ is the Grassmannian
bundle: a point of $Y'$ is a pair $(b,W)$, where
$W\subset\cH'(b)$ is a two-plane, and $p'(b,W)=b$.
The pullback bundle $(p')^*\cH'$ has fiber $\cH'(b)$ at this point.
Its \emph{tautological subbundle} $\mathcal{S}'$ has fiber $W$, and the
quotient bundle $\mathcal{Q}'$ has fiber $\cH'(b)/W$. These rank-two
bundles give the universal exact sequence
\begin{align}\label{univprime}
 0\longrightarrow \mathcal{S}'\longrightarrow (p')^*\cH'
 \longrightarrow \mathcal{Q}'\longrightarrow0.
\end{align}
Pulling this sequence back by a section $\sigma':B'\to Y'$
recovers the corresponding rank-two subbundle
$\cF'=(\sigma')^*\mathcal{S}'\subset\cH'$
and quotient $(\sigma')^*\mathcal{Q}'\simeq\cH'/\cF'$. In particular,
$\cF'_0=(\sigma'_0)^*\mathcal{S}'$. We next show that these bundles
and sequence \eqref{univprime} also extend to $Y$.

\begin{proposition}\label{univextension}
Under the extensions in Proposition~\ref{periodextension},
the bundles $\mathcal{S}'$ and $\mathcal{Q}'$ extend to rank-two
holomorphic bundles $\mathcal{S}$ and $\mathcal{Q}$ on $Y$,
respectively, and the exact sequence \eqref{univprime} extends to
\begin{align}\label{univ}
 0\longrightarrow \mathcal{S}\longrightarrow p^*\cH
 \longrightarrow \mathcal{Q}\longrightarrow 0.
\end{align}
Pulling back along a section $\sigma:B\to Y$ gives
\begin{align}
 0\longrightarrow\cF_\sigma\longrightarrow\cH
 \longrightarrow\cH/\cF_\sigma\longrightarrow0,
\end{align}
where $\cF_\sigma=\sigma^*\mathcal{S}$ is the rank-two subbundle
determined by $\sigma$ and
$\sigma^*\mathcal{Q}\simeq\cH/\cF_\sigma$.
\end{proposition}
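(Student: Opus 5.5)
The approach is to use the fact that the Grassmannian bundle $Y=\Gr_B(2,\cH)$ is built from the holomorphic rank-four bundle $\cH$ on all of $B$. Proposition~\ref{periodextension} supplies this bundle, and the universal sequence on a relative Grassmannian is intrinsic to it. So no local analysis at $\Sigma$ should be needed; everything comes from the general construction for $p:Y\to B$.

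\textbf{Construction of $\mathcal{S}$ and $\mathcal{Q}$.} Take $p^*\cH$ on $Y$, a rank-four holomorphic bundle, and define $\mathcal{S}\subset p^*\cH$ to be the subsheaf whose fiber at $(b,W)$ is $W$.

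To see that this is a holomorphic subbundle, work over a disk $U\subset B$ on which $\cH$ is trivialized. Over $U$ we then have $Y|_U\simeq U\times\Gr(2,\CC^4)$, and $\mathcal{S}$ is the pullback of the tautological bundle of $\Gr(2,\CC^4)$. In the standard affine charts, where $W$ is the row span of $(I_2,Z)$ up to a permutation of coordinates, the rows give a holomorphic frame of $\mathcal{S}$. A change of trivialization by $g(b)\in GL_4(\mathcal{O}(U\cap U'))$ acts on $Y$ by $(b,W)\mapsto(b,g(b)W)$ and maps tautological subbundles to each other. Hence these local subbundles glue to a global rank-two subbundle $\mathcal{S}\subset p^*\cH$.

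Define $\mathcal{Q}=p^*\cH/\mathcal{S}$. It is locally free of rank two, since the quotient of a bundle by a subbundle is locally free. This gives the exact sequence \eqref{univ}.

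\textbf{Agreement over $B'$.} Over $p^{-1}(B')=Y'$ we have $\cH|_{B'}=\cH'$, so the construction reproduces \eqref{univprime} verbatim. Therefore $\mathcal{S}$ and $\mathcal{Q}$ are extensions of $\mathcal{S}'$ and $\mathcal{Q}'$, and they are unique as subbundle and quotient of $p^*\cH$, by density of $Y'$ and closedness of the subbundle condition.

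\textbf{Pullback along a section.} Pulling back is exact on vector bundles, so applying $\sigma^*$ to \eqref{univ} gives an exact sequence
$$0\to\sigma^*\mathcal{S}\to\sigma^*p^*\cH\to\sigma^*\mathcal{Q}\to0.$$
Since $p\circ\sigma=\id$, the middle term is $\sigma^*p^*\cH=\cH$. By construction the fiber of $\sigma^*\mathcal{S}$ at $b$ is the plane $\sigma(b)\subset\cH(b)$. Thus $\cF_\sigma:=\sigma^*\mathcal{S}$ is a rank-two subbundle of $\cH$, and $\sigma^*\mathcal{Q}\simeq\cH/\cF_\sigma$.

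Conversely, a rank-two subbundle $\cF\subset\cH$ determines a holomorphic section $\sigma$ of $Y$ by the universal property of the relative Grassmannian. For $\sigma_0$ this recovers $\cF_0$ from Proposition~\ref{periodextension}.

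\textbf{Expected obstacle.} There is no serious obstacle. The only point needing care is to remember that over $\Sigma$ the fiber $\cH(b)$ is no longer $H^1$ of a fiber. It is simply the fiber of the extended bundle, for example spanned by $e_i,f_i$ or by $s_1,\ldots,s_4$. All statements are therefore about $\cH$ as an abstract bundle. In particular, the connection, which is where the poles live, plays no role, and nothing about the period-domain condition \eqref{D} is claimed at $\Sigma$.
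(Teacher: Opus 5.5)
Your proposal is correct and follows essentially the same route as the paper. You define $\mathcal{S}$ and $\mathcal{Q}$ as the tautological sub- and quotient bundles of $p^*\cH$ on $Y=\Gr_B(2,\cH)$, identify them locally with the universal bundles on $\Gr(2,\CC^4)$ via trivializations of $\cH$ and glue by the transition functions, restrict to $Y'$ using $\cH|_{B'}=\cH'$, and pull back the locally split sequence along $\sigma$ using $p\circ\sigma=\id_B$. Your extra remark on uniqueness as a subbundle of the fixed $p^*\cH$ is also correct, though the paper does not need it.
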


\begin{proof}
Let $\mathcal{S}$ and $\mathcal{Q}$ be the tautological subbundle
and quotient bundle on $Y=\Gr_B(2,\cH)$. Locally on $B$,
a trivialization of $\cH$ identifies these with the usual
tautological and quotient bundles on $\Gr(2,\CC^4)$, giving
the exact sequence \eqref{univ}. The transition functions of
$\cH$ induce compatible identifications of these subbundles and
quotients, so the local sequences glue. Since $\cH|_{B'}=\cH'$
and $p^{-1}(B')=Y'$,
the restriction to $Y'$ is \eqref{univprime}.
Finally, \eqref{univ} is locally split because $\mathcal{Q}$
is locally free, so its pullback by $\sigma$ is exact.
The identity $p\circ\sigma=\id_B$ gives the claimed sequence on $B$.
\end{proof}
Note that the proposition applied in particular to the original section $\sigma_0$ yields
\begin{align}
\cF_0=\sigma_0^*\mathcal{S}, \quad \sigma_0^*\mathcal{Q}\simeq\cH/\cF_0
\end{align} 
over all of $B$.

%%%%%%%%%%%%%%%%%%%%%%%%%%%%%%%%%%%%%%%%%%%
\section{Combs from twistor conics} \label{s:combs}
In this section, we discuss the existence of rational curves called \textit{twistor conics}, define \textit{combs}, and then investigate their deformations. Using this, we construct new period sections from the original $2$-torus fibration. 

\subsection{Rational curves in the period domain}\label{conics}
We briefly recall how rational curves in the period domain arise from the classical twistor
construction; see \cite[14.10--14.11]{Besse}. Choose a Euclidean
inner product compatible with the given complex structure $I$
on a real four-dimensional vector space, an orthogonal complex
structure $J$ anticommuting with $I$, and put $K=IJ$.
The quaternionic relations give
\begin{align*}
 (xI+yJ+zK)^2=-\id
 \qquad\text{if }x^2+y^2+z^2=1.
\end{align*}
These complex structures descend to the quotient by any lattice,
since they are translation invariant. Identifying the parameter
sphere with $\PP^1$, their Hodge planes form a holomorphic rational
curve in the period domain, which we call a \emph{twistor conic}
since its Pl\"ucker degree is two; see
\cite[Corollaries~2.8 and~5.2]{BuskinIzadi} and the calculation below.
The following lemma states the elementary bundle and deformation
properties needed for our application.

\begin{lemma}\label{twistor}
Through any point $x\in D'_b$ there is a twistor conic
$C\subset D'_b$ such that, after identifying $C$ with $\PP^1$,
\begin{align*}
 \mathcal{S}|_C\simeq\OO_{\PP^1}(-1)^{\oplus2},\qquad
 \mathcal{Q}|_C\simeq\OO_{\PP^1}(1)^{\oplus2},\qquad
 N_{C/Y_b}\simeq\OO_{\PP^1}(2)^{\oplus3}.
\end{align*}
The local space of embedded deformations of $C$ containing $x$
is smooth of complex dimension six. Sufficiently small such
deformations are smooth conics in $D'_b$ with the same bundle
splittings, and their tangent lines at $x$ fill a nonempty
complex-open subset of $\PP(T_xY_b)$.
\end{lemma}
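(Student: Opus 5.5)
Fix $b\in B'$ and write $V=\cH'(b)=V_\RR\otimes\CC$ with $V_\RR=H^1(X_{0,b},\RR)$, a real $4$-dimensional space. The plan is to make the twistor construction completely explicit. We then read off the restrictions of $\mathcal{S}$ and $\mathcal{Q}$ from linear algebra, and the normal bundle from the tangent sequence. Openness and the deformation statements then come from standard deformation theory of curves with unobstructed normal bundle.

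\emph{Construction and splittings.} Let $x=W\in D'_b$, so $V=W\oplus\overline W$. Choose a Hermitian form $h$ on $W$; equivalently, choose a Euclidean metric on $V_\RR$ compatible with the complex structure $I$ for which $W=V^{1,0}_I$. We also fix a quaternionic triple $I,J,K$ as in the twistor construction. For $\lambda\in\PP^1$ we write the Hodge plane of the complex structure parametrized by $\lambda$ in the affine chart as
\begin{align*}
 W_\lambda=\{w+\lambda\,\phi(w): w\in W\},
\end{align*}
where $\phi:W\to\overline W$ is the conjugate-linear map determined by $J$ (up to the standard identification), and we set $W_\infty=\overline W$. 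Each $W_\lambda$ lies in $D'_b$, since it is the $(1,0)$-space of a genuine complex structure on $V_\RR$. The map $\lambda\mapsto W_\lambda$ is therefore a holomorphic rational curve $C$ through $x=W_0$. The frame $w_k+\lambda\phi(w_k)$, $k=1,2$, exhibits $\mathcal{S}|_C$ as the subbundle of the trivial bundle $V\otimes\OO$ spanned by sections linear in homogeneous coordinates $[\mu:\lambda]$, namely $\mu w_k+\lambda\phi(w_k)$. This gives $\mathcal{S}|_C\simeq\OO(-1)^{\oplus2}$, and $C$ has Pl\"ucker degree $-\deg\mathcal{S}|_C=2$. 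From $0\to\mathcal{S}|_C\to V\otimes\OO\to\mathcal{Q}|_C\to0$ we get $\deg\mathcal{Q}|_C=2$. In the same chart $\mathcal{Q}|_C$ is globally generated with quotient frame $\overline W\otimes\OO$ near $\lambda=0$ and $W\otimes\OO$ near $\infty$. The transition is multiplication by $\lambda$ on each factor, so $\mathcal{Q}|_C\simeq\OO(1)^{\oplus2}$. This uses that $\phi$ is invertible, which holds because $J$ anticommutes with $I$.

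\emph{Normal bundle.} We have $T_{Y_b}|_C=\sHom(\mathcal{S},\mathcal{Q})|_C\simeq\OO(2)^{\oplus4}$ and $T_C=\OO(2)$. The normal bundle is therefore a rank-$3$ quotient of $\OO(2)^{\oplus4}$ of degree $6$. Any quotient of a bundle of the form $\OO(2)^{\oplus4}$ has all summands of degree at least $2$, so with total degree $6$ and rank $3$ the normal bundle is forced to be $N_{C/Y_b}\simeq\OO(2)^{\oplus3}$. Hence $H^1(N)=0$ and $h^0(N)=9$. Embedded deformations of $C$ in $Y_b$ are unobstructed, with a $9$-dimensional smooth Hilbert scheme germ. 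Requiring the curve to pass through $x$ imposes $\dim Y_b-1=3$ conditions. These are transverse because $N\otimes\mathfrak m_x=\OO(1)^{\oplus3}$ has $H^1=0$, which leaves a smooth germ of dimension $h^0(N(-x))=6$.

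\emph{Openness and tangent lines.} Small deformations stay inside the open set $D'_b$ because $C$ is compact. Smoothness, degree and the splitting types are open conditions, since $\OO(a)^{\oplus r}$ is rigid by upper semicontinuity of $h^1$ of twists. The tangent-line statement is the main point requiring care. I would show that the differential of the map from the $6$-dimensional germ to $\PP(T_xY_b)$ is surjective at $C$. The relevant space is $H^0(N(-x))\to N(-x)|_x$, and this is onto because $N(-2x)=\OO^{\oplus3}$ has $H^1=0$. The image of this map projects onto $T_x\PP(T_xY_b)$ modulo the tangent direction. A submersion has open image, which gives a nonempty open set of tangent directions. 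If a cleaner route is wanted, one can instead vary the choices of $h$ and $J$ directly. Varying $J$ over the $2$-sphere of compatible structures, together with the metrics, already moves $d\lambda$ over an open set in $\Hom(W,\overline W)\simeq T_xY_b$, since the tangent vector is $\phi$ itself.
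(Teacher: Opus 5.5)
Your main line is correct and is essentially the paper's proof. The graph family $W_\lambda$ is the quaternionic twistor description, and your frames give $\mathcal S|_C\simeq\OO(-1)^{\oplus2}$ and $\mathcal Q|_C\simeq\OO(1)^{\oplus2}$. The normal bundle is the quotient of $\sHom(\mathcal S,\mathcal Q)|_C\simeq\OO(2)^{\oplus4}$ by $T_C\simeq\OO(2)$; your degree argument replaces the paper's remark that this inclusion is a nonzero constant vector, with the same result. Then $H^1(N(-x))=0$ gives the smooth six-dimensional germ, and $H^1(N(-2x))=0$ makes the tangent-direction map a submersion at $C$. Two things should be fixed, however.

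First, $\phi$ must be the $\CC$-linear map $J|_W:W\to\overline W$, i.e. the complexified $J$. It lands in $\overline W$ because $JI=-IJ$, and it is invertible because $J^2=-\id$. With a conjugate-linear $\phi$, the set $\{w+\lambda\phi(w)\}$ is only a real subspace, not a complex plane.

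Second, and more substantively, discard the proposed ``cleaner route'' of varying $h$ and $J$: it is false. Twistor conics through $x$ form only a real three-dimensional family, and their tangent directions are the classes $[J|_W]$. Write $J|_W(w)=\overline{\psi(w)}$ with $\psi:W\to W$ conjugate-linear. Then the reality of $J$ together with $J^2=-\id$ forces $\psi^2=-\id$. This cuts out a real three-dimensional subset of $\PP(T_xY_b)\simeq\PP^3$, which has real dimension six.

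So twistor conics alone never sweep an open set of tangent directions. The openness genuinely requires the non-twistor complex deformations in the six-dimensional germ, which is exactly what your $H^1(N(-2x))=0$ argument provides. This is also why the paper, and the comb construction, use small complex deformations of twistor conics rather than twistor conics themselves.
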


\begin{proof}
Choose a positive definite real inner product compatible with the
complex structure at $x$. One component of its maximal isotropic
complex two-planes is the twistor conic just described, holomorphically
embedded as a conic in $\Gr(2,\CC^4)$. A plane in this curve cannot
meet its conjugate: otherwise their intersection supplies a nonzero
real isotropic vector. In the quaternionic description the tautological
bundle is $\OO_{\PP^1}(-1)\otimes\CC^2$, and the quotient is
$\OO_{\PP^1}(1)\otimes\CC^2$. This also gives its Pl\"ucker degree $2$.

Since $T_{\Gr}=\mathcal{S}^*\otimes \mathcal{Q}$, its restriction is $\OO_{\PP^1}(2)^{\oplus4}$.
The tangent subbundle of $C$ is $\OO_{\PP^1}(2)$; its inclusion is given
by a nonzero constant vector. Hence its quotient is
$N_{C/Y_b}\simeq\OO_{\PP^1}(2)^{\oplus3}$.

Embedded deformations containing $x$ have tangent space
$H^0(C,N_{C/Y_b}(-x))$ and obstruction space
$H^1(C,N_{C/Y_b}(-x))$. The normal-bundle calculation gives
\begin{align*}
 h^0(C,N_{C/Y_b}(-x))
 &=h^0(\PP^1,\OO_{\PP^1}(1)^{\oplus3})=3\cdot2=6,\\
 H^1(C,N_{C/Y_b}(-x))&=0.
\end{align*}
Thus this deformation space is smooth of complex dimension six.
The curves remain smooth conics, and the three bundle splittings
persist under sufficiently small deformations. Since $C$ is compact
inside the open set $D'_b$, these deformations remain in $D'_b$.
Finally,
\begin{align*}
 H^1(C,N_{C/Y_b}(-2x))=0
\end{align*}
implies that normal sections vanishing at $x$ can have arbitrary
first derivative there. The tangent-direction map from the deformation
space is therefore a submersion at $C$, proving the last statement.
\end{proof}

The twistor conics through a fixed point form a real manifold
of dimension three; see \cite[Corollary~2.5]{BuskinIzadi}.
By Lemma~\ref{twistor}, allowing general complex deformations
through that point gives a smooth space of complex dimension six.
These deformed curves need not themselves be twistor conics.
Their tangent directions fill a complex-open set, providing the
freedom needed in the smoothing argument below.

\begin{definition}
\label{d:comb}
Fix $m$ distinct points $b_i\in B'$,
and attach a small complex deformation $C_i\subset Y_{b_i}$ of a twistor conic through $\sigma_0(b_i)$. The nodal curve
\begin{align*}
 C =\sigma_0(B)\cup C_1\cup\cdots\cup C_m
\end{align*}
is called a \textit{comb} of arithmetic genus zero.  The \textit{handle} is the 
section $\sigma_0(B)$ and the \textit{teeth} are the curves $C_i$. 
\end{definition}

The following calculation is the key motivation behind this definition and our main construction.  Let us suppose that $C$ could be perturbed to a smooth section 
$\sigma_s : B \rightarrow Y$ for $s \neq 0$. Then consider the rank $2$ bundles  
\begin{align}
E_s :=  \sigma_s^* \mathcal{Q}.
\end{align}
Then we have 
\begin{align}
\deg (E_s) = \int_B c_1 (\sigma_s^* \mathcal{Q}) 
= \int_{\sigma_s(B)} c_1 ( \det \mathcal{Q})
= c_1 ( \det \mathcal{Q}) \cdot [ \sigma_s(B)],
\end{align}
where $\cdot$ denotes the natural pairing between cohomology and homology. 
Since $\sigma_s(B)$ is homologous to $C$, we have 
\begin{align}
\deg (E_s) = c_1 ( \det \mathcal{Q}) \cdot  [\sigma_0(B)] +
\sum_{i = 1}^m   c_1 ( \det \mathcal{Q}) \cdot  [C_i].
\end{align}
Lemma \ref{twistor} tells us, upon identifying $C_i$ with $\PP^1$, that 
\begin{align}
\mathcal{Q}|_{C_i}\simeq \mathcal{O}_{\PP^1}(1) \oplus \mathcal{O}_{\PP^1}(1),
\quad \det(\mathcal{Q})|_{C_i} \simeq \mathcal{O}_{\PP^1}(2). 
\end{align}
We conclude that 
\begin{align}
\deg (E_s) = \deg(E_0) + 2m. 
\end{align}
Thus we can make the degree as large as we want by choosing a comb with many teeth.
An ordinary deformation of $\sigma_0$ would preserve its curve class
and hence this degree; attaching the teeth changes the class before smoothing.
To obtain the desired examples, it therefore remains to carry out three steps:
\begin{enumerate}
\item For $m$ sufficiently large, smooth the comb to sections $\sigma_s:B\to Y$ for $s$ sufficiently small and nonzero, agreeing with $\sigma_0$ at $\Sigma$.
\item
Use $\sigma_s$ to construct a smooth complex threefold $X_s$, diffeomorphic to $X_0$, with a holomorphic fibration $f_s:X_s\to B$ whose general fiber is a complex $2$-torus. 
\item Prove that $E_s\simeq R^1f_{s*}\mathcal{O}_{X_s}$ over all of $B$, including the critical values, so that no additional degree corrections occur there. 
\end{enumerate}
These steps will give examples in the first case of Proposition~\ref{degrees}
with $a+b>0$ for $m$ sufficiently large. 
The first step will be dealt with in the next subsection; the local fillings
and bundle comparison are proved in Section~\ref{fillings}, and the global
construction and diffeomorphism in Section~\ref{glue}. 
\subsection{Smoothing the comb}
\label{combsmooth}

The following proposition applies a standard comb-smoothing
argument to our period bundle; see
\cite[Section~II.7]{Kollar} and \cite[Section~2.1]{GHS}.
We use the normal-sheaf formulation with fixed marked points
described in \cite[Lemma~20 and Proposition~24]{HT}.
The resulting section agrees with $\sigma_0$ at every critical
value in $\Sigma$.

\begin{proposition}\label{comb}
For all sufficiently large $m$, the teeth can be chosen so that $C$
has a deformation to sections $\sigma_s:B\to Y$, $s\neq0$,
agreeing with $\sigma_0$ at $\Sigma$. Their curve class is
\begin{align*}
 [\sigma_s(B)]=[C]=[\sigma_0(B)]+\sum_{i=1}^m[C_i]
 \quad\text{in }H_2(Y,\ZZ).
\end{align*}
For every compact subset $K\subset B'$, the images $\sigma_s(K)$ lie
in the admissible period domain for all sufficiently small $s\neq0$,
where the bound on $|s|$ may depend on $K$.
The smoothings can also be chosen so that
\begin{align}\label{normalvan}
 H^1\bigl(B,N_{\sigma_s(B)/Y}(-\Sigma)\bigr)=0,
\end{align}
where $\Sigma$ is considered as a reduced divisor.
\end{proposition}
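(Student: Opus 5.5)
We will run the standard comb-smoothing argument of \cite[Section~II.7]{Kollar} and \cite[Lemma~20, Proposition~24]{HT} in $Y$, with the marked points $\Sigma$ held fixed. The deformation theory of the nodal curve $C$ in $Y$, with $C\cap p^{-1}(\Sigma)$ constrained to stay at $\sigma_0(\Sigma)$, is governed by the normal sheaf $N_C=N_{C/Y}$ twisted down by $\Sigma$. Unobstructedness means
\begin{align*}
H^1\bigl(C,N_C(-\Sigma)\bigr)=0 .
\end{align*}
A smoothing exists once this holds and, at every node $x_i=\sigma_0(b_i)$, some global section of $N_C(-\Sigma)$ is nonzero in the node's $T^1$ (the $\mathcal{E}xt^1$ skyscraper). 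Any flat deformation of $C$ inside $Y$ to a smooth irreducible curve of the same class, meeting each fiber of $p$ with total degree one, is the image of a section $\sigma_s$, because $p_*[C]=[B]$. So smoothing the comb produces the sections $\sigma_s$, and they satisfy $\sigma_s(\Sigma)=\sigma_0(\Sigma)$. The class identity is automatic from the flat family.

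The key steps run as follows. First, on each tooth Lemma~\ref{twistor} gives $N_{C_i/Y}\simeq N_{C_i/Y_{b_i}}\oplus\OO\simeq\OO(2)^{3}\oplus\OO$; the trivial summand is the pullback of $T_{b_i}B$. On the handle, $N_{\sigma_0(B)/Y}\simeq\sigma_0^*T_{Y/B}=\cF_0^*\otimes(\cH/\cF_0)$, which is some fixed rank-four bundle $N_0$ on $\PP^1$, possibly quite negative. The restriction of $N_C$ to the handle is the elementary modification $N_0^{+}$ of $N_0$ at each $b_i$ in the direction of the tangent line $T_{x_i}C_i\subset T_{x_i}Y_{b_i}=N_0(b_i)$. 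Second, we use the freedom from Lemma~\ref{twistor}: the tangent lines of the teeth fill a nonempty open subset of $\PP(N_0(b_i))$. The standard lemma \cite[Lemma~II.7.9]{Kollar}, \cite[Lemma~20]{HT} then says that for $m$ large and general points $b_i$ with general directions in such open sets, we get $H^1\bigl(\PP^1,N_0^{+}(-\Sigma-\textstyle\sum_i b_i)\bigr)=0$. Indeed, each modification raises the degree by one, and generic directions make the bundle balance out; an open subset of directions suffices because the vanishing is an open condition and the locus of good directions is Zariski-dense, being nonempty for sufficiently spread-out choices. Third, we glue via the normalization sequence. The teeth have $H^1(C_i,N_C|_{C_i}(-x_i))=0$, since $\OO(1)^3\oplus\OO(-1)$ has no $H^1$; note that $N_C|_{C_i}$ also gets a positive modification at $x_i$ in the handle's direction. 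Combined with the handle vanishing from the second step, this gives $H^1(C,N_C(-\Sigma))=0$ and surjectivity onto each node's $T^1$, so $C$ smooths with $\Sigma$ fixed.

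For the remaining assertions, admissibility on a compact $K\subset B'$ follows because $\sigma_0(K)\cup\bigcup C_i$ is a compact subset of the open set $D'$, and $\sigma_s(K)$ converges to it in the Hausdorff sense as $s\to0$. For \eqref{normalvan}, $H^1$ vanishing is open in flat families, so $H^1(C,N_C(-\Sigma))=0$ gives $H^1(B,N_{\sigma_s(B)/Y}(-\Sigma))=0$ for small $s$ by upper semicontinuity; this is \cite[Proposition~24]{HT}. The main obstacle is the second step: we must check that the genericity needed for the elementary modifications is achieved using only the open set of directions available from twistor-conic deformations, rather than arbitrary directions. We must also check that $m$ must exceed roughly the total negativity of $N_0(-\Sigma)$ (which explains the later explicit bound $m\ge12$). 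I would first try the Kollár argument: one general modification at a time lowers $h^1$ by one until it vanishes, and that condition is Zariski-open in the direction.
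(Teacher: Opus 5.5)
Your overall route is the paper's: positive elementary modifications of $N_0=N_{\sigma_0(B)/Y}$ at the nodes in the tooth directions, a nonempty Zariski-open set of good directions in $\prod_i\PP(N_0(b_i))$ meeting the complex-open set of conic tangent directions from Lemma~\ref{twistor}, vanishing on the teeth, and semicontinuity for \eqref{normalvan}. The step that fails is the handle vanishing you ask for, $H^1\bigl(\PP^1,N_0^{+}(-\Sigma-\sum_i b_i)\bigr)=0$. Here $\deg N_0^{+}=\deg N_0+m$, and you twist a rank-four bundle by a divisor of degree $r+m$. Riemann--Roch therefore gives $\chi=4+\deg N_0-4r-3m$, which is negative for large $m$. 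So this $H^1$ is nonzero for exactly the $m$ you need; in the application of Subsection~\ref{smoothingbound}, where $\deg N_0=0$ and $r=3$, it fails for every $m$. For this degree reason, no lemma of the Koll\'ar/Hassett--Tschinkel type can allow twisting down by the modification locus itself. Such lemmas only allow twists of degree bounded independently of $m$. Relatedly, you never need $H^0$ to surject onto all $m$ node fibers simultaneously.

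What is actually needed, and what the paper proves, is that $N_0^{+}(-\Sigma)$ is globally generated with vanishing $H^1$. On $\PP^1$ this is the condition $H^1\bigl(\PP^1,N_0^{+}(-\Sigma)\otimes\OO_{\PP^1}(-1)\bigr)=0$. Feed this into your sequence $0\to\bigoplus_i N_C|_{C_i}(-x_i)\to N_C(-\Sigma)\to N_0^{+}(-\Sigma)\to0$, together with the tooth vanishing. (Note $N_C|_{C_i}(-x_i)\simeq\OO(1)^{3}\oplus\OO$, not $\OO(1)^3\oplus\OO(-1)$, though neither has $H^1$.) This gives $H^1(C,N_C(-\Sigma))=0$ and surjectivity of $H^0(C,N_C(-\Sigma))\to H^0(\PP^1,N_0^{+}(-\Sigma))$. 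Global generation at each $b_i$ separately makes each map $H^0(N_0^{+}(-\Sigma))\to\CC_{b_i}$ a nonzero functional. A general section then avoids all $m$ kernels and smooths every node. The paper shows the good locus is nonempty concretely: it places $m_j\geq\max(0,r-n_j)$ directions in the $j$-th summand of $N_0\simeq\bigoplus_j\OO(n_j)$. This makes $N_0^{+}(-\Sigma)\simeq\bigoplus_j\OO(n_j+m_j-r)$ with all degrees nonnegative, which is the bound \eqref{bound} behind $m\geq12$. Your one-general-modification-at-a-time heuristic recovers the same bound if you run it on $N_0(-\Sigma)\otimes\OO(-1)$, whose $h^1$ equals $\sum_j\max(0,r-n_j)$, rather than on the twist by $-\sum_i b_i$.
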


\begin{proof}
The nodal comb is a local complete intersection in $Y$. Its normal
sheaf is the locally free sheaf
$N_{C/Y}=\sHom(\mathcal{I}_C/\mathcal{I}_C^2,\mathcal{O}_C)$,
where $\mathcal{I}_C$ is its ideal sheaf.
For the embedded deformation theory used here, see
\cite[Sections~2,~6, and~9]{HartshorneDef}.
Write
\begin{align*}
 N_{\sigma_0(B)/Y}\simeq\bigoplus_{j=1}^4\OO_{\PP^1}(n_j),
 \qquad r=\deg\Sigma=|\Sigma|.
\end{align*}
The restriction of $N_{C/Y}$ to the handle fits into the exact sequence
\cite[Section~4.3]{HT}
\begin{align}\label{mod}
 0\longrightarrow N_{\sigma_0(B)/Y}
 \longrightarrow N_{C/Y}|_B
 \longrightarrow\bigoplus_{i=1}^m\CC_{b_i}\longrightarrow0.
\end{align}
Here each $\CC_{b_i}$ is a one-dimensional sheaf supported at the node.
The middle bundle allows a simple pole at $b_i$ in the normal
direction given by the tangent to $C_i$; this is called a positive
elementary modification. In that direction a local generator $e$
is replaced by $e/t$, where $t$ vanishes at $b_i$.
For example, choosing
$m_j\geq\max(0,r-n_j)$ modification directions in the $j$-th split
summand makes the modified bundle, after tensoring with $\OO_{\PP^1}(-\Sigma)$,
globally generated with vanishing $H^1$. Thus a sufficient bound for
generic modification directions is
\begin{align}\label{bound}
 m\geq\sum_{j=1}^4\max(0,r-n_j).
\end{align}
Let $N_0=N_{\sigma_0(B)/Y}$. The choices of modification
directions form $\prod_{i=1}^m\PP(N_0(b_i))$, using the
identification $N_0(b_i)\simeq T_{\sigma_0(b_i)}Y_{b_i}$.
The choices for which the modified bundle, after twisting by
$-\Sigma$, is globally generated with vanishing $H^1$ form
a Zariski-open subset of this product. The preceding example
shows that this subset is nonempty. Since a proper algebraic subset
has empty complex-analytic interior, this Zariski-open subset meets
the product of the nonempty complex-open sets furnished by
Lemma~\ref{twistor}.
Hence the modifications can be realized by conics with image
entirely inside the period domain. 

On each tooth, identified with $\PP^1$, there is an exact sequence
\begin{align*}
 0\longrightarrow\OO_{\PP^1}(2)^{\oplus3}
 \longrightarrow N_{C/Y}|_{C_i}
 \longrightarrow\OO_{\PP^1}(1)\longrightarrow0 .
\end{align*}
It splits, since $H^1(\OO_{\PP^1}(1))=0$. The $\OO_{\PP^1}(1)$ is the modification
at the node of the trivial direction normal to the Grassmannian fiber.
Here $\Sigma$ on $C$ denotes the divisor of smooth handle points
$\sigma_0(\Sigma)$, disjoint from all nodes.
Therefore $N_{C/Y}(-\Sigma)$ is globally generated and has vanishing $H^1$
on every component, and hence on the comb $C$ by
\cite[Lemma~21]{HT}.

Embedded deformations containing the fixed points of the handle over
$\Sigma$ are unobstructed. Global generation allows a
normal section whose image in each of the finitely many node-smoothing
spaces is nonzero. Integrating such a deformation smooths every node
and retains the prescribed values. This is the usual comb-smoothing
argument; compare \cite[Lemma~22 and Proposition~24]{HT}.

These flat embedded deformations preserve the class of the entire
comb in $H_2(Y,\ZZ)$. The smoothed curve has genus zero and degree
one over $B$, so it is the image of a section $\sigma_s$ with the
stated curve class. Since it contains every fixed point
$\sigma_0(b)$ for $b\in\Sigma$, this section satisfies
$\sigma_s(b)=\sigma_0(b)$ there.
Near each point of the handle away from the
nodes, the family is holomorphic in $(b,s)$, including $s=0$.
Finally, the portion of the entire comb over a fixed compact subset
of $B'$ is compact inside the open period domain. Sufficiently small
deformations remain in that domain there.

To prove \eqref{normalvan}, we show that the normal-bundle vanishing persists under smoothing. The nodal comb is a local complete intersection in the
smooth variety $Y$. After shrinking the smoothing base, its relative
conormal sheaf is locally free and commutes with restriction to each
fiber, and the same holds for its dual. The fixed points over $\Sigma$
are disjoint from the nodes and give a relative Cartier divisor
on the family of curves. Twisting the relative normal bundle by the
negative of this divisor gives a sheaf flat over the smoothing base.
Semicontinuity, applied to the vanishing on $C$ proved above, gives
\eqref{normalvan}.
\end{proof}

\begin{remark}\label{combconv}
On every compact subset of $B\setminus\{b_1,\ldots,b_m\}$, the sections
$\sigma_s$ converge to $\sigma_0$ in $C^\infty$ as $s\to0$.
In local holomorphic coordinates, their component functions and all
derivatives converge uniformly. This follows from the holomorphic
graph description near the handle away from the nodes. At the $b_i$,
the limiting curve contains the entire tooth $C_i$, so this convergence
to the original section does not hold there. We will next prove that $\sigma_s(B')\subset D'$ over all of $B'$ for sufficiently small $s\ne0$, using particular properties of the local fillings. 
\end{remark}

%%%%%%%%%%%%%%%%%%%%%%%%%%%%%%%%%%%%%%%%%%%%
\section{Local fillings}\label{fillings}
Near each critical value, the period sections obtained in
Section~\ref{s:combs} form a holomorphic family $\sigma_s$ with
$\sigma_s(0)=\sigma_0(0)$. Write $\cF_s=\sigma_s^*\mathcal{S}$.
We use the same local fillings as in the original construction
\cite{Mumford,Engel}, in the order of the two cases in
Subsection~\ref{localmodels}. We first recall each filling, then
prove that it extends to the perturbed periods and retains the
required smooth attachments. Each local proposition also identifies
the first coherent direct image with $\cH/\cF_s$.
Subsection~\ref{compare} shows the compatibility of these
identifications for the global construction.

\subsection{The hexagonal filling}\label{hex}

We recall the toric model used in the hexagonal filling
\cite[Proposition~2.8]{Engel}. Start with the square lattice
$\ZZ^2\subset\RR^2$, and divide every unit square by the
diagonal from $(a,b)$ to $(a+1,b+1)$. For each triangle with
vertices $v_1,v_2,v_3$, take the cone in $\RR^3$ generated by
$(v_1,1),(v_2,1),(v_3,1)$. These cones and their faces form a fan.
Each triple of generators is a basis of $\ZZ^3$, so the toric
charts are copies of $\CC^3$, glued by the corresponding
monomial coordinate changes. The resulting toric manifold has a
holomorphic function $t$ whose expression in every such chart is
\begin{align}\label{semistable}
 t=x_1x_2x_3.
\end{align}
Let $\widetilde{\mathcal X}$ denote its restriction to a small
disk $\Delta_t$. Over $\Delta_t^*$ it is
$(\CC^*)^2\times\Delta_t^*$, with coordinates $(z_1,z_2,t)$.

The central components are indexed by the lattice vertices, double
curves by edges, and triple points by triangles. Each component is
$dP_6$: its fan has rays $\pm(1,0),\pm(0,1),\pm(1,1)$.
Quotienting by
the two lattice translations, with the torus multiplications
specified by the units in the periods, gives the hexagonal filling.
Its central fiber has normalization $dP_6$, with opposite boundary
curves identified. The following proposition verifies that this
construction works for the deformed periods as well.

\begin{proposition}\label{hexfill}
In case~\textup{(i)}, let $\sigma_s$ be a holomorphic family of
local period sections with $\sigma_s(0)=\sigma_0(0)$.
After shrinking the base and parameter disks, the corresponding
torus families over $\Delta_t^*$ admit a proper flat filling
\begin{align*}
 \mathcal X_{\rm hex}\longrightarrow\Delta_t\times\Delta_s
\end{align*}
with smooth total space and submersive projection to $\Delta_s$.
For each $s$, the map $f_s:X_s\to\Delta_t$ has the original
hexagonal central fiber. The fillings are diffeomorphic over the
base, compatibly with the smooth boundary attachments. Moreover,
the natural Hodge quotient identification over $\Delta_t^*$
extends to an isomorphism
\begin{align}\label{heximage}
 R^1f_{s*}\OO_{X_s}\simeq\cH/\cF_s=\sigma_s^*\mathcal{Q}
 \qquad\hbox{on }\Delta_t.
\end{align}
\end{proposition}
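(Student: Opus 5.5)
The plan is to reduce everything to the original Mumford-type construction by showing that the deformed period matrices have exactly the same shape \eqref{tau}: $\tau_s(t)=\ell(t)I_2+H_s(t)$, with $H_s$ holomorphic in $(t,s)$ on $\Delta_t\times\Delta_s$ and $H_0=H$.

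\emph{Period matrices.} Since $\sigma_s(0)=\sigma_0(0)=\cF_0(0)$, Proposition~\ref{hexmonodromy} gives $\cH(0)=\cF_0(0)\oplus\ker N_{0}$, and $\ker N=\Span\{f_1,f_2\}$. Transversality to $\Span\{f_1,f_2\}$ is an open condition. So after shrinking both disks, each plane $\cF_s(t)$ is the graph of a unique $2\times2$ matrix over $\Span\{e_1,e_2\}$. Concretely, $\cF_s$ is the row span of $(I_2,H_s(t))$ in the frame $e_1,e_2,f_1,f_2$, with $H_s$ holomorphic in both variables because $\sigma_s$ is. Undoing \eqref{logframe} and \eqref{hodgeperiodrows}, the corresponding torus over $t\neq0$ is $\CC^2/(\ZZ^2+\tau_s(t)\ZZ^2)$, with $\tau_s=\ell I_2+H_s$. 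Exponentiating, the multiplicative periods take the form \eqref{units} with new units $u_{ij}(t,s)=\exp(2\pi i\,H_{s,ij}(t))$, holomorphic and nonvanishing on $\Delta_t\times\Delta_s$.

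One point to check is that $\sigma_s(\Delta_t^*)\subset D'$, i.e.\ that $\operatorname{Im}\tau_s$ is positive definite. I expect to obtain this from $\operatorname{Im}\tau_s=-\frac{\log|t|}{2\pi}I_2+\operatorname{Im}H_s$, after shrinking $\Delta_t$ uniformly in $s$ using a uniform bound on $H_s$. This also supplies the missing part of Remark~\ref{combconv} near this critical value.

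\emph{Filling.} I then run the toric construction of Subsection~\ref{hex} in families. Take the toric manifold $\widetilde{\mathcal X}$ restricted to $\Delta_t$, times $\Delta_s$, and let $\ZZ^2$ act by the two lattice translations of the fan composed with torus multiplication by $(u_{11}(t,s),u_{21}(t,s))$ and $(u_{12}(t,s),u_{22}(t,s))$. This is the part I expect to be the main obstacle: showing that this action is properly discontinuous and free on the whole total space, including the central fiber, uniformly in $s$. Here I would follow the proof for $s=0$ \cite{Mumford,Engel}. That argument uses only that the units are holomorphic and nonvanishing near $t=0$, and that the translations act on the fan cocompactly and freely; both hold uniformly after shrinking. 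The quotient $\mathcal X_{\rm hex}$ is then smooth, and $t$ is proper and flat. Projection to $\Delta_s$ is submersive since $(z,t,s)\mapsto s$ is a coordinate in each chart. Each $s$-slice has the same central fiber, because the central fiber of the toric quotient does not depend on the units up to isomorphism. Ehresmann's theorem (relative version, for the proper submersion to $\Delta_s$ compatible with $t$) then gives diffeomorphisms over $\Delta_t$ that restrict on boundary neighborhoods to the smooth attachments of the torus bundles.

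\emph{Direct image.} $R^1f_{s*}\OO_{X_s}$ is locally free of rank $2$ by Theorem~\ref{raynaud}. On $\Delta_t^*$ it agrees with $\cH/\cF_s$ via \eqref{hodgequotient}. To extend across $0$, I would frame both sides. For $\cH/\cF_s$, the classes of $f_1,f_2$ give a frame by the graph description. For $R^1f_{s*}\OO$, the images of the monodromy-invariant integral classes $\beta_1,\beta_2$ under the map \eqref{ainj} are nowhere vanishing on $\Delta_t$, by the argument of Lemma~\ref{nowhere} applied locally with \eqref{constantbc}. Their values at $0$ are linearly independent, which follows from $H^1$ of the central fiber containing $\ZZ^2$ injecting via \eqref{fiberexp}. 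Since the identification sends $f_j=\beta_j$ to these sections, it maps a frame to a frame and hence extends as an isomorphism, giving \eqref{heximage}.
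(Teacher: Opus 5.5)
Your overall route is the paper's: the graph matrix $H_s$, the new units $u_{ij}(t,s)=\exp(2\pi i H_{ij,s}(t))$, the toric quotient run in families, and a frame of the Hodge quotient by the invariant classes $f_j=\beta_j$. The paper makes proper discontinuity explicit using $r=(\log|z_1|,\log|z_2|)/\log|t|$, on which $\lambda$ acts by $r\mapsto r+(I+U/\log|t|)\lambda$. Deferring to the $s=0$ argument is acceptable.

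Your reason for the central fiber being unchanged is not right, however. Opposite boundary curves of $dP_6$ are glued by torus-equivariant maps whose scalings are determined by the values $u_{ij}(0,s)$. The torus of $dP_6$ commutes with these gluings, so it cannot normalize them away, and in general different values give non-isomorphic surfaces. The correct reason uses your hypothesis $\sigma_s(0)=\sigma_0(0)$. It gives $H_s(0)=H_0(0)$, hence $u_{ij}(0,s)=u_{ij}(0,0)$, so the central gluings agree exactly.

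\textbf{The direct-image step has a genuine gap.}
\begin{itemize}
\item Injectivity of $\alpha_0^F\colon H^1(R,\ZZ)\to H^1(R,\OO_R)$ from \eqref{fiberexp} only shows that nonzero \emph{integral} combinations of $\beta_1,\beta_2$ have nonzero image. It does not give $\CC$-linear independence; on a smooth fiber, $\ZZ^4$ injects into $\CC^2$.
\item $\CC$-independence at $t=0$ is exactly what decides the outcome. Either the extended map $\bar f_j\mapsto\alpha(\beta_j)$ is an isomorphism, or its determinant vanishes at $0$. That vanishing is precisely what \eqref{heximage} must exclude.
\item The missing input is that $H^1(R,\CC)\to H^1(R,\OO_R)$ is injective. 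The paper proves it is an isomorphism in \eqref{centralcomparison}. The normalization strata of $R$ are $dP_6$, three rational double curves and two triple points. These have vanishing $H^1(\cdot,\CC)$ and vanishing higher $\OO$-cohomology. So both groups equal $\ker D$ for the same complex $\CC\xrightarrow{0}\CC^3\xrightarrow{D}\CC^2$.
\end{itemize}

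You also assume without argument that $\beta_1,\beta_2$ extend to sections of $R^1f_*\ZZ$ over all of $\Delta_t$, with independent specializations in $H^1(R,\ZZ)$. The paper obtains both from the two characters of the deck group $\ZZ^2$ of $\widetilde{\mathcal X}\to X$. On smooth fibers these restrict to $\beta_1,\beta_2$. On $R$ they restrict to an integral basis of $H^1(R,\ZZ)$, because the infinite quilt of toric surfaces is simply connected.

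With these two facts, Grauert's base change with $h^1(\OO_R)=2$ shows the images form a frame at $t=0$, which gives \eqref{heximage}.
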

\begin{proof}
At a hexagonal point, the transversality in
Proposition~\ref{hexmonodromy}, equation~\eqref{hextransverse},
persists for nearby sections. In the frame \eqref{logframe},
projection of each such plane onto $\Span_{\CC}\{e_1,e_2\}$
along $\ker N=\Span_{\CC}\{f_1,f_2\}$ is an isomorphism.
Each plane is therefore the graph of a unique holomorphic matrix
$H_s(t)$.
In this frame the plane has matrix $(I,H_s(t))$. Returning to
the flat basis of Subsection~\ref{localmodels}, its period matrix is
\begin{align*}
 \tau_s(t)=\frac{\log t}{2\pi i}I+H_s(t).
\end{align*}
The matrices $H_s$ are bounded uniformly for small $s,t$.
Hence
\begin{align*}
 \Im\tau_s(t)=-\frac{\log|t|}{2\pi}I+\Im H_s(t)
\end{align*}
is invertible for sufficiently small $t\neq0$, uniformly for small $s$.

Set $u_{ij}(t,s)=\exp(2\pi i H_{ij,s}(t))$. On the torus
$(\CC^*)^2$ take the commuting transformations
\begin{align}
 g_1(z_1,z_2,t,s)&=
 (t u_{11}(t,s)z_1,\ u_{21}(t,s)z_2,\ t,s),\label{g1}\\
 g_2(z_1,z_2,t,s)&=
 (u_{12}(t,s)z_1,\ t u_{22}(t,s)z_2,\ t,s).\label{g2}
\end{align}
The monomial maps $(z_1,z_2,t)\mapsto(tz_1,z_2,t)$ and
$(z_1,z_2,t)\mapsto(z_1,tz_2,t)$ induce the lattice maps
\begin{align*}
 (a,b,c)\longmapsto(a+c,b,c),\qquad
 (a,b,c)\longmapsto(a,b+c,c).
\end{align*}
On the plane $c=1$, these translate the triangulation by $(1,0)$
and $(0,1)$, respectively. They preserve the fan and therefore extend
to automorphisms of $\widetilde{\mathcal X}$. Multiplication by the
units $u_{ij}(t,s)$ also extends, by the torus action on this toric
manifold. These maps preserve $(t,s)$, so \eqref{g1}--\eqref{g2}
extend to commuting automorphisms of
$\widetilde{\mathcal X}\times\Delta_s$.

We check that their action is free and properly discontinuous, and
that the quotient is proper over $\Delta_t\times\Delta_s$.
For a point with $t\neq0$, put
\begin{align*}
 r(z,t)=\frac{(\log|z_1|,\log|z_2|)}{\log|t|}\in\RR^2.
\end{align*}
In the chart corresponding to a triangle with vertices $v_1,v_2,v_3$,
\begin{align*}
 r=\sum_{j=1}^3
 \frac{\log|x_j|}{\log|t|}\,v_j,\qquad
 \sum_{j=1}^3\frac{\log|x_j|}{\log|t|}=1.
\end{align*}
On a compact subset of the chart the $|x_j|$ are bounded above.
As $t\to0$, the displayed coefficients therefore have a uniform
lower bound tending to zero, and consequently a uniform upper bound.
It follows that $r$ stays bounded on a fixed compact subset of
$\widetilde{\mathcal X}$ as $t\to0$.

The action of $\lambda\in\ZZ^2$ on these real coordinates is
\begin{align}\label{tropicalaction}
 r\longmapsto r+A(t,s)\lambda,\qquad
 A(t,s)=I+\frac{U(t,s)}{\log|t|},
\end{align}
where $U$ is the bounded real matrix of logarithms of the absolute
values of the units in \eqref{g1}--\eqref{g2}. After shrinking the
disk, $A$ and $A^{-1}$ are uniformly bounded.
If a sequence of group translates carries a fixed compact set back
to meet itself at points with $t\to0$, boundedness of $r$ and
\eqref{tropicalaction} force the corresponding $\lambda$'s to be
bounded. Away from $t=0$ the same conclusion is the usual properness
of a full lattice action on logarithmic radii.

On the central fiber, strata are indexed by the cells of the
triangulation. A nonzero lattice translation fixes no cell; compact
sets meet only finitely many central strata. This proves freeness
there and gives the analogous proper-discontinuity claim on the
central fiber. On a noncentral fiber, $A\lambda=0$ implies
$\lambda=0$, proving freeness there as well.

To check properness of the quotient over the base, translate $r$
into a fundamental parallelogram for the lattice $A(t,s)\ZZ^2$.
These parallelograms lie in a fixed bounded region. Only finitely many
triangles of the fixed triangulation meet that region. Choosing a
triangle containing the translated $r$, its barycentric coordinates
are nonnegative, so the corresponding toric coordinates satisfy
$|x_j|\leq1$. Intersect these finitely many closed unit polydisks
with the inverse image of a smaller closed $t$-disk, and let $s$
range over a smaller closed parameter disk. This gives compact
representatives for all noncentral fibers over these disks.
Representatives for the central quotient are compact as well:
one fundamental collection of toric components and their boundary
strata suffices.

A free properly discontinuous action on this Hausdorff toric manifold
has a Hausdorff manifold quotient. This gives a proper analytic quotient with smooth total space
\begin{align}\label{hexquotient}
 \mathcal X_{\rm hex}
 =(\widetilde{\mathcal X}\times\Delta_s)/\langle g_1,g_2\rangle
 \longrightarrow\Delta_t\times\Delta_s .
\end{align}
Its local map to the base is $(x_1,x_2,x_3,s)\mapsto
(x_1x_2x_3,s)$, or a lower-codimension version of this expression.
It is therefore flat, and its projection to $\Delta_s$ is a
submersion. Over $t\neq0$, taking logarithms in
\eqref{g1}--\eqref{g2} recovers the periods $\tau_s$.

Since the period section is fixed at $t=0$, $H_s(0)=H_0(0)$.
The unit values defining the central gluings therefore agree exactly,
so the central fiber is unchanged.
In local quotient charts the equation for $t$ is independent of $s$.
Local lifts of a vector field in the $s$-direction can consequently
be chosen to annihilate $t$, then patched by a smooth partition of
unity. On compact inverse images of closed $t$-disks, their flows
give diffeomorphisms of the fillings preserving the map to $t$.
On a regular boundary collar these lifts can be chosen to agree with
the prescribed smooth identifications of the torus families.

It remains to prove \eqref{heximage}. Fix $s$ and write
$f:X\to\Delta$ and $\cF=\cF_s$. The transversality used above shows that $\ker N$ maps isomorphically to
$\cH/\cF$. Its constant sections therefore give a frame for
this quotient.
We compare them with coherent cohomology at the central fiber $R$.

The normalization of $R$ is $dP_6$, with three normalized double
curves $\PP^1$ and two triple points. Choose the three oriented
edges of the unit-square triangulation to be $a,b,c$, where $c$
is the diagonal. The degree-zero rows in both the constant and
holomorphic normalization resolutions are
\begin{align}\label{normalizationmatrix}
 \CC\xrightarrow{\,0\,}\CC^3
 \xrightarrow{\ D\ }\CC^2,\qquad
 D=\begin{pmatrix}1&1&-1\\-1&-1&1\end{pmatrix}.
\end{align}
The normalization strata have vanishing first singular cohomology and
vanishing higher $\OO$-cohomology. Thus $H^1(R,\OO_R)=\ker D$ has dimension
two, $H^2(R,\OO_R)=\coker D$ has dimension one, and the natural map
in degree one is an isomorphism:
\begin{align}\label{centralcomparison}
 H^1(R,\CC)\xrightarrow{\ \sim\ }H^1(R,\OO_R).
\end{align}

Use the homology basis $(A_1,A_2,B_1,B_2)$ and dual
cohomology basis of case~(i) in Subsection~\ref{localmodels}.
The invariant integral cohomology classes are generated by
$\beta_1,\beta_2$. We construct their extensions over the filled
neighborhood directly from the covering space.

The covering $\widetilde{\mathcal X}\to X$ of the entire local filling
has deck group $\ZZ^2$. Its two characters define integral
cohomology classes on the entire neighborhood. On a smooth fiber
they are exactly $\beta_1,\beta_2$: angular loops lift to closed
loops, and the two $B_i$ lift with the corresponding deck
translations. On the central fiber the cover is the infinite quilt
of rational toric surfaces. Its components and intersections are
simply connected and its dual complex is the plane; van Kampen gives
simple connectedness of this quilt. Thus the characters restrict to
an integral basis of $H^1(R,\ZZ)$.
Equivalently, in \eqref{normalizationmatrix} their representatives
are $(1,0,1)$ and $(0,1,1)$.

This constructs the extending integral classes directly. It also
proves that specialization identifies $H^1(R,\ZZ)$ with the full
primitive invariant lattice. Since $f_i=\beta_i$ in the frame
\eqref{logframe}, these are
exactly the classes whose images frame $\cH/\cF$. The same
statement holds for each nearby period section by the transversality
used above.

The map from the constant sheaf to the structure sheaf sends these
classes to holomorphic sections of $R^1f_*\OO_X$.
By Grauert, applied to $h^1(\OO_R)=2$ and the nearby tori,
\eqref{centralcomparison} shows that these two invariant classes
give a coherent frame at $t=0$. On the punctured disk that frame is
the image of the same classes in the Hodge quotient. Thus the
isomorphism over the punctured disk extends across the origin,
without a zero or pole in its determinant.
\end{proof}

\subsection{Finite quotients}\label{finitefill}

In case~\textup{(ii)}, the original filling is the quotient of a
smooth family of tori over $\Delta_w$ by a free cyclic group
$G$ of order $n=3$ or $4$, acting affinely on the fibers and
covering $w\mapsto\zeta w$, where $\zeta=\exp(2\pi i/n)$.
The quotient map on the base is $\pi:\Delta_w\to\Delta_t$,
$t=w^n$. The central fiber downstairs has multiplicity $n$,
and its reduction is the quotient of the central torus by $G$.
To deform this filling, we lift the new Hodge planes to the whole
covering disk and retain the translation part of the affine action.

After changing the origin by a holomorphic section, the original
generator can be written in real torus coordinates as
\begin{align}\label{affine}
 (w,x)\longmapsto(\zeta w,T x+v/n),
 \qquad T^n=I,\quad v\in\widetilde\Lambda^T,\quad n\in\{3,4\}.
\end{align}
Here $\widetilde\Lambda$ is the homology lattice of the smooth
covering family, extending the pullback of $\Lambda'$ across $w=0$,
and $\widetilde\Lambda_{\RR}=\widetilde\Lambda\otimes_{\ZZ}\RR$.
The translation $v/n$ is viewed in
$\widetilde\Lambda_{\RR}/\widetilde\Lambda$.
To see this normalization, lift the translation term to a holomorphic
section of the Lie bundle on the disk. Composing the generator
$n$ times gives a constant lattice translation $v$, invariant
under $T$. Subtracting the flat section $v/n$ leaves a translation
cocycle whose $n$ translates sum to zero. Averaging over the
$n$ group elements makes it a coboundary and removes it by a
holomorphic translation.

\begin{proposition}\label{finitequot}
In case~\textup{(ii)}, let $\sigma_s$ be a holomorphic family of
local period sections with $\sigma_s(0)=\sigma_0(0)$.
For $n=3$ or $4$, after shrinking the base and parameter disks,
the corresponding torus families over $\Delta_t^*$ admit a
proper flat filling with smooth total space and submersive
projection to $\Delta_s$. Each map $f_s:X_s\to\Delta_t$
has central fiber of multiplicity $n$ with the original reduced
surface. The fillings are diffeomorphic over the base, compatibly
with the smooth boundary attachments. Moreover, the natural
Hodge quotient identification over $\Delta_t^*$ extends to
\begin{align}\label{finiteimage}
 R^1f_{s*}\OO_{X_s}\simeq\cH/\cF_s=\sigma_s^*\mathcal{Q}
 \qquad\hbox{on }\Delta_t.
\end{align}
\end{proposition}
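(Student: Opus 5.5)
The plan is to work entirely on the covering disk $\Delta_w$ and descend, in parallel with case~(ii) of Proposition~\ref{periodextension}. First, the period section $\sigma_s$ near the critical value corresponds to a rank-two subbundle $\cF_s\subset\cH$ over $\Delta_t$. Using the descent description \eqref{invext}, I pull $\cF_s$ back along $\pi$. This gives a $G$-equivariant subbundle $\widetilde{\cF}_s\subset\widetilde{\cH}=\pi^*\cH$ away from $w=0$. The point is that $\widetilde{\cF}_s$ extends as a subbundle of the \emph{flat} bundle $(R^1g_*\CC)\otimes\OO_{\Delta_w}$ across $w=0$, and is $G$-stable for the action $\rho$.

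To see this extension, note that $\sigma_s(0)=\sigma_0(0)$ and that $\sigma_s$ depends holomorphically on $(t,s)$. Equivariant subbundles of $\widetilde{\cH}$ correspond to $G$-stable subbundles whose fiber at $w=0$ is $\rho$-stable. The fiber at $w=0$ is fixed, equal to $\widetilde{\cF}_0(0)$, so I would define $\widetilde{\cF}_s$ through the invariant-module description. Concretely, write the plane in the frame \eqref{finiteframe} (respectively \eqref{finitefour}) as a graph over $\widetilde{\cF}_0$. Converting back to the flat frame $c_j$ multiplies the coefficients by $w$ or $w^{n-1}$, which stays holomorphic. At $w=0$ the resulting plane is the fiber of $\widetilde{\cF}_0$, by the fixed-value condition, and it is equivariant by construction.

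Since $\widetilde{\cF}_s(0)=\widetilde{\cF}_0(0)$ satisfies the open condition $W\cap\overline W=0$, shrinking the disks makes $\widetilde{\cF}_s(w)$ admissible on all of $\Delta_w\times\Delta_s$. It therefore defines a smooth family of tori $\widetilde g_s:\widetilde{\mathcal X}_s\to\Delta_w$. To build it, take $\widetilde\Lambda$ and the Lie bundle $\widetilde{\cH}^\vee/\widetilde{\cF}_s^{\perp}$, and form the quotient of the Lie bundle by the lattice. Now $T$ preserves $\widetilde\Lambda$ and the period plane, by equivariance, so the affine map \eqref{affine}, with the same flat translation $v/n$, is a holomorphic automorphism of $\widetilde{\mathcal X}_s\times\Delta_s$ covering $w\mapsto\zeta w$.

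Freeness away from $w=0$ is automatic. On the central torus, $\widetilde{\cF}_s(0)$ is the original one, so the central fiber and the action on it are literally unchanged. The action is therefore free there, and the quotient over $w=0$ is the original bielliptic reduced fiber with multiplicity $n$. Taking the quotient gives the filling $X_s\to\Delta_t$, which is proper, flat, and has smooth total space. Submersivity over $\Delta_s$ follows because the covering family is a smooth torus family over $\Delta_w\times\Delta_s$. For the diffeomorphism statement, I would trivialize the covering family smoothly in $s$ by an equivariant connection, obtained by averaging lifts of $\partial_s$ over $G$ and taking lifts annihilating $w$. Its flow descends and preserves $t$, and can be matched to the given boundary attachment on a collar.

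For \eqref{finiteimage}, upstairs $R^1\widetilde g_{s*}\OO=\widetilde{\cH}/\widetilde{\cF}_s$ is a smooth torus family. The map $\Delta_w\to\Delta_t$ is finite and the $G$-action on the total space is free. Hence $R^1f_{s*}\OO_{X_s}=(\pi_*R^1\widetilde g_{s*}\OO)^G$, by the same invariants argument as in \eqref{invariantR}. By exactness of invariants this equals $(\pi_*\widetilde{\cH})^G/(\pi_*\widetilde{\cF}_s)^G=\cH/\cF_s$, compatibly with the Hodge quotient map over $\Delta_t^*$.

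The main obstacle is the first step, namely showing that the descended $\cF_s$ really comes from a holomorphic equivariant subbundle of the flat bundle upstairs, rather than one with poles or degenerations at $w=0$. The twisted frame $s_3=wc_3$, $s_4=w^{n-1}c_4$ is precisely where this could fail. I would handle it by the graph-coordinate computation above, exploiting that $\sigma_s$ and $\sigma_0$ agree at $t=0$.
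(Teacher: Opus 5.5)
Your overall route is the paper's: lift to the cover $\Delta_w$, show that the perturbed Hodge planes extend across $w=0$ with the original central plane, keep the affine action \eqref{affine}, take the free quotient, and descend $R^1$ using exactness of finite pushforward and of $G$-invariants. The gap is in the step you yourself identify as the crux. The justification you give for it does not work.

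In the frames $s_j$ and $c_j$, the natural map $\pi^*\cH\to\widetilde{\cH}$ is $D(w)=\diag(1,1,w,w^{n-1})$. This map drops rank at $w=0$, so $\widetilde{\cH}\neq\pi^*\cH$ across the center. Consequently ``multiplying the coefficients by $w$ or $w^{n-1}$'' produces holomorphic sections that need not span a plane at $w=0$.

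Concretely, $\widetilde{\cF}_0(0)$ is $\rho$-stable and admissible, so it contains an eigenvector $c_3$ or $c_4$. The corresponding generator of $\cF_0$ becomes $w\,c_3+\cdots$ or $w^{n-1}c_4+\cdots$ upstairs. Hence the images of a frame of $\cF_s$ span only a line at $w=0$. Holomorphy is automatic; what you must control is the saturation.

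To do that, you have to divide by powers of $w$ up to $w^{n-1}$. Equivalently, as in the paper, write $\cF_s=\Psi_s\cF_0$ with $\Psi_s(t)=I+O(t)$, and pass to the conjugate $\Psi_s^\sharp(w)=D(w)\Psi_s(w^n)D(w)^{-1}$ of \eqref{lift} and \eqref{liftfour}. The poles of order at most $n-1$ coming from $D^{-1}$ are cancelled because $\Psi_s(w^n)-I$ vanishes to order $n$ in $w$. That vanishing is exactly where $\sigma_s(0)=\sigma_0(0)$ is used. It gives $\Psi_s^\sharp=I+O(w)$, the subbundle $\widetilde{\cF}_s=\Psi_s^\sharp\widetilde{\cF}_0$ with unchanged central plane, and $(\pi_*\widetilde{\cF}_s)^G=\cF_s$.

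Exact agreement at $t=0$ is essential; closeness is not enough. Take the generator whose upstairs image vanishes at $w=0$, and add $\epsilon v'$ to it, with $v'\in\Span(s_1,s_2)$ outside the line onto which $\cF_0(0)$ projects. Then the upstairs central plane jumps to the $\rho$-fixed plane $\Span(c_1,c_2)$. That plane is conjugation invariant, so the central torus degenerates.

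With this step repaired, the rest of your argument agrees with the paper. For the diffeomorphism, the paper simply notes that the smooth model is literally the same quotient of $\Delta_w\times(\widetilde\Lambda_{\RR}/\widetilde\Lambda)$ by the same affine map. Your equivariant flow is therefore harmless but unnecessary.
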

\begin{proof}
We give the details at the order-three point, where $t=w^3$.
The frame \eqref{finiteframe} is represented on the covering disk
by $c_1,c_2,wc_3,w^2c_4$. Put
\begin{align*}
 D(w)=\diag(1,1,w,w^2).
\end{align*}
We check that the perturbed Hodge planes downstairs lift
holomorphically across the center of this cover.

Because $\sigma_s-\sigma_0=O(t)$, there is a local holomorphic
automorphism $\Psi_s(t)=I+O(t)$ of $\cH$ taking $\cF_0$ to $\cF_s$.
Upstairs its expression is
\begin{align}\label{lift}
 \Psi_s^\sharp(w)=D(w)\Psi_s(w^3)D(w)^{-1}.
\end{align}
The entries of $\Psi_s(w^3)-I$ vanish to order at least three, whereas
$D(w)^{-1}$ has poles of order at most two. Consequently
$\Psi_s^\sharp(w)=I+O(w)$ extends holomorphically across $w=0$.
Since $D(\zeta w)=\rho D(w)$, this lift is equivariant.
Thus $\widetilde{\cF}_s=\Psi_s^\sharp\widetilde{\cF}_0$ is a Hodge
subbundle on the whole covering disk with exactly the original
central Hodge plane. On a sufficiently small closed disk, for
sufficiently small $s$, it satisfies \eqref{D} uniformly and
produces a smooth family of complex tori.

A flat real vector projects to a holomorphic section of
$(\widetilde{\cF}_s)^*$. Thus \eqref{affine} defines a holomorphic
deck action on every new family, with $n=3$. Its third power is the lattice
translation $v$, hence the identity on the torus. At $w=0$, both
the complex torus and the affine action are unchanged, so freeness
is preserved. At $w\ne0$ freeness follows from the base action.

At the order-four point, use the frame \eqref{finitefour} and put
$D_4(w)=\diag(1,1,w,w^3)$. The corresponding lift is
\begin{align}\label{liftfour}
 \Psi_s^\sharp(w)=D_4(w)\Psi_s(w^4)D_4(w)^{-1}.
\end{align}
Here $\Psi_s(w^4)-I$ vanishes to order at least four and $D_4^{-1}$
has poles of order at most three, so again $\Psi_s^\sharp=I+O(w)$.
The same argument preserves the affine action \eqref{affine}
with $n=4$.

Taking the free finite quotient supplies the new local filling,
holomorphically in the parameter $s$. It is proper because the
covering torus family and $\pi$ are proper. The quotient has smooth
total space, and its projection to $\Delta_s$ is a submersion.
Locally the map to $\Delta_t\times\Delta_s$ is $(w,y_1,y_2,s)
\mapsto(w^n,s)$, since the finite group acts freely. In particular,
this map is flat. Its underlying smooth model
is the same quotient of $\Delta_w\times(\widetilde\Lambda_{\RR}/\widetilde\Lambda)$
by the same affine action. In particular, the multiplicity, either
three or four, and the central reduced surface are retained.

As in the hexagonal case, these smooth identifications can be
chosen to match the smoothly varying boundary-collar
identifications used in Section~\ref{glue}, starting with the
original attachment.

To identify the direct image, fix $s$ and let
$g_s:\widetilde X_s\to\Delta_w$ denote the resulting smooth
torus family. The equivariant lifts \eqref{lift} and
\eqref{liftfour} give
\begin{align*}
 (\pi_*\widetilde{\cF}_s)^G
 =\Psi_s(\pi_*\widetilde{\cF}_0)^G=\cF_s.
\end{align*}
Finite pushforward is exact, and taking $G$-invariants is exact
by averaging. Applying these operations to the Hodge quotient
identity for the covering family gives
\begin{align*}
 R^1f_{s*}\OO_{X_s}
 &\simeq \bigl(\pi_*R^1g_{s*}\OO_{\widetilde X_s}\bigr)^G\\
 &\simeq \bigl(\pi_*(\widetilde{\cH}/\widetilde{\cF}_s)\bigr)^G
 \simeq\cH/\cF_s.
\end{align*}
Here the first isomorphism uses the free finite quotient
$\widetilde X_s\to X_s$, and the last uses \eqref{invext}
and the preceding identity. On the punctured disk this is the
natural Hodge quotient identification. The calculation holds
also at the branch point and includes the scheme-theoretic
multiple fiber.

\end{proof}

\subsection{Compatibility of the direct-image identifications}
\label{compare}

The local propositions identify the extensions over the entire
disks around the critical values, not just their restrictions to the punctured
disks. This is what determines the global quotient bundle and
its degree.

\begin{proposition}\label{localquotient}
Suppose the local fillings of Propositions~\ref{hexfill}
and~\ref{finitequot} are glued to the torus family over $B'$
by holomorphic identifications inducing the prescribed
identifications of the cohomology local system.
For the resulting map $f_s:X_s\to B$, the natural Hodge quotient
identification over $B'$ extends uniquely to an isomorphism
\begin{align*}
 R^1f_{s*}\OO_{X_s}\simeq\cH/\cF_s=\sigma_s^*\mathcal{Q}.
\end{align*}
\end{proposition}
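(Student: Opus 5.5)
The isomorphism is built on $B'$ and on each disk around a point of $\Sigma$, and then glued. The gluing works because all the pieces are restrictions of one canonical map on the overlaps.

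\emph{Step 1 (over $B'$).} Over $B'$ the fibers of $f_s$ are smooth complex tori whose Hodge planes are $\cF_s(b)$, by construction from the period section $\sigma_s$. The natural map $\cH'=R^1f'_{s*}\CC\otimes\OO_{B'}\to R^1f'_{s*}\OO_{X'_s}$ comes from $\CC_X\to\OO_X$. It is surjective with kernel $\cF'_s$, exactly as in \eqref{hodgequotient}. This gives the identification $\phi'\colon R^1f'_{s*}\OO\simeq\cH'/\cF'_s$. Here we use the hypothesis that the gluing maps induce the prescribed identifications of $\mathcal{Z}'$. Under this hypothesis, the local system used to form $\cH'$ and $\cF'_s$ agrees with the actual $R^1f'_{s*}\ZZ$ of the glued family, so $\phi'$ is intrinsic to $X_s$ and independent of the chosen charts.

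\emph{Step 2 (near $\Sigma$).} On each disk $\Delta$ around a critical value, Propositions~\ref{hexfill} and~\ref{finitequot} give isomorphisms $\phi_\Delta\colon R^1f_{s*}\OO_{X_s}|_\Delta\simeq(\cH/\cF_s)|_\Delta$. They also state that over $\Delta^*$ these maps agree with the natural Hodge-quotient map. We must check that the extension $\cH|_\Delta$ built in Proposition~\ref{periodextension} is glued to $\cH'$ by the same identification the filling induces on $R^1f_*\ZZ$. This holds because the local fillings were glued by holomorphic maps respecting the cohomology local system. Hence $\phi_\Delta|_{\Delta^*}=\phi'|_{\Delta^*}$.

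\emph{Step 3 (gluing and uniqueness).} Since the two maps agree on the overlaps $\Delta^*$, $\phi'$ and the $\phi_\Delta$ glue to a global isomorphism of locally free sheaves on $B$. Uniqueness follows because $R^1f_{s*}\OO_{X_s}$ is locally free by Theorem~\ref{raynaud}, so it is torsion-free. Two homomorphisms into the vector bundle $\cH/\cF_s$ that agree on the dense open set $B'$ therefore agree everywhere, by the identity theorem. The equality $\cH/\cF_s=\sigma_s^*\mathcal{Q}$ is Proposition~\ref{univextension}.

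\emph{Main obstacle.} The delicate point is Step 2: checking that "the natural identification over $\Delta^*$" in the local propositions is really the same map as the global $\phi'$. This requires the local framings ($e_i,f_i$ in case (i), and the descended $s_j$ in case (ii)) to be defined from the same marking of $\mathcal{Z}'$ that the gluing uses. A different marking, for example one twisted by a monodromy element or by a nontrivial automorphism of the local system, would change $\cH|_\Delta$ by an elementary modification. It could then introduce a zero or pole of the determinant at the critical value. I would handle this by tracing the monodromy-invariant classes: $\beta_1,\beta_2$ at the hexagonal point, and $c_1,c_2$ together with the eigen-classes at the finite points. These classes are specified integrally by the gluing hypothesis, so they pin down the lattice extension uniquely. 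The comparison then reduces to the explicit frames in those propositions.
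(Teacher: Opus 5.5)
Your proposal is correct and is essentially the paper's proof: it uses the identification \eqref{hodgequotient} over $B'$, the local isomorphisms \eqref{heximage} and \eqref{finiteimage} on the critical disks, gluing because these agree with the natural map on the punctured disks, and uniqueness from agreement on the dense open set $B'$. The marking compatibility you flag as the main obstacle is supplied by the hypothesis of the statement, and the local extensions of $\cH$ are the canonical logarithmic ones (residue eigenvalues in $[0,1)$), so a flat automorphism of the local system over the punctured disk could not produce the elementary modification you worry about; also, uniqueness only needs the target $\cH/\cF_s$ to be torsion-free.
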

\begin{proof}
Over $B'$ this is the Hodge quotient identification
\eqref{hodgequotient}. On each disk around a critical value, use
\eqref{heximage}
or \eqref{finiteimage}. These maps agree
with the natural identification on the punctured disk, so they
glue to the claimed isomorphism.
Two such extensions agree on the dense open set $B'$ and hence
everywhere.
\end{proof}

%%%%%%%%%%%%%%%%%%%%%%%%%%%%%%%%%%%%%%%%%%%%%%%%%%%%
\section{Global extension}\label{glue}

We first describe the smooth torus families over $B'=B\setminus\Sigma$
and explain why the original family admits a holomorphic section there.
Recall that $\Lambda'=\operatorname{Hom}(\mathcal{Z}',\ZZ_{B'})$
is the homology local system from Subsection~\ref{ss:period}.
We use $\cF'$ below for an arbitrary admissible Hodge subbundle
of $\cH'$. This includes the original $\cF'_0$ and the bundles
$\cF'_s=\cF_s|_{B'}$ of the smoothed period sections. The cohomology
bundle $\cH'$ and homology local system $\Lambda'$ remain fixed
as $s$ varies.
For such a $\cF'$, evaluation gives
\begin{align*}
 \Lambda'_{\RR,b}\longrightarrow\cF'(b)^*,\qquad
 \lambda\longmapsto\bigl(\omega\mapsto\omega(\lambda)\bigr).
\end{align*}
This real-linear map is injective: if a real class annihilates
$\cF'(b)$, it also annihilates its conjugate, and hence all of
$\cH'(b)$ by \eqref{D}. Both sides have real dimension four, so
evaluation is an isomorphism. Thus $\Lambda'_b$ maps to a lattice,
and the associated family of complex tori is
\begin{align}\label{Jac}
 \mathscr T'_{\cF'}=(\cF')^*/\Lambda'\longrightarrow B'.
\end{align}
As with $\cH'$, we use $\Lambda'_{\RR}$ also for its associated
flat real vector bundle on $B'$, with fiber
$\Lambda'_b\otimes_{\ZZ}\RR$. Taking the quotient in each fiber gives
a smooth bundle of real four-tori
\begin{align}\label{fixedreal}
 \mathscr M'=\Lambda'_{\RR}/\Lambda'\longrightarrow B',\qquad
 \mathscr M'_b=(\Lambda'_b\otimes_{\ZZ}\RR)/\Lambda'_b
 \cong\RR^4/\ZZ^4.
\end{align}
Its transition maps are induced by the integral monodromy matrices.

\begin{remark}
The two formulas \eqref{Jac} and \eqref{fixedreal} describe the
complex and smooth aspects of the same torus family. The bundle
$\mathscr M'$ uses only the integral homology local system and
is fixed throughout the construction. Evaluation identifies $\Lambda'_{\RR}$ with the underlying
real vector bundle of $(\cF')^*$, so a choice of Hodge planes equips $\mathscr M'$ with the complex structure
in \eqref{Jac}. For the original Hodge bundle $\cF'_0$, this
complex torus family is the relative Albanese of $f'_0:X'_0\to B'$:
its fiber over $b$ is
$H^0(X_{0,b},\Omega^1_{X_{0,b}})^*/H_1(X_{0,b},\ZZ)$.
The holomorphic vector bundle
$(\cF')^*$ is the tangent bundle along the zero section in the
fiber directions; the first coherent direct image is
$\cH'/\cF'$, by \eqref{hodgequotient}.
\end{remark}

The original torus fibers have no chosen origins. The following
lemma provides a holomorphic section over $B'$ and hence identifies
the original family with its relative Albanese.

\begin{lemma}\label{sections}
For every admissible $\cF'$, the torus family
$\mathscr T'_{\cF'}\to B'$ in \eqref{Jac} has the holomorphic
zero section $b\mapsto[0]\in\cF'(b)^*/\Lambda'_b$.
The original map $f'_0:X'_0\to B'$ also admits a holomorphic
section. Choosing such a section as origin gives an isomorphism
\begin{align*}
 X'_0\simeq\mathscr T'_{\cF'_0}
\end{align*}
over $B'$.
\end{lemma}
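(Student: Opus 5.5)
The zero section claim is immediate, so the plan concentrates on the original family. First I will check that $b\mapsto[0]$ is a holomorphic section of $\mathscr T'_{\cF'}$ for every admissible $\cF'$. The quotient map $(\cF')^*\to(\cF')^*/\Lambda'$ is a holomorphic covering over $B'$, and the zero section of the holomorphic vector bundle $(\cF')^*$ is holomorphic. Its image is therefore a holomorphic section.

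For $f'_0$, I will work with the relative Albanese torsor structure. Each fiber $X_{0,b}$ is a torsor under its Albanese torus $\operatorname{Alb}(X_{0,b})=\cF'_0(b)^*/\Lambda'_b$. So $X'_0$ is a holomorphic principal homogeneous space under the group family $\mathscr T'=\mathscr T'_{\cF'_0}$ over $B'$. Such torsors are classified by $H^1(B',\mathcal O(\mathscr T'))$, where $\mathcal O(\mathscr T')$ is the sheaf of holomorphic sections. This sheaf sits in the exact sequence
\begin{align*}
0\longrightarrow\Lambda'\longrightarrow\cO((\cF'_0)^*)\longrightarrow\cO(\mathscr T')\longrightarrow0.
\end{align*}
The vector bundle term is coherent on the noncompact curve $B'$, which is Stein because $\Sigma\neq\varnothing$. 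Hence its higher cohomology vanishes, and we get $H^1(B',\cO(\mathscr T'))\hookrightarrow H^2(B',\Lambda')$. Since $B'$ is a noncompact Riemann surface, it is homotopy equivalent to a one-dimensional CW complex. Therefore $H^2(B',\Lambda')=0$ for any local system, and the torsor class vanishes. So $f'_0$ admits a holomorphic section $\eta$.

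Having chosen $\eta$ as origin, I will define $X'_0\to\mathscr T'$ fiberwise by the Albanese–Abel–Jacobi map $x\mapsto\int_{\eta(b)}^{x}$. Here the integral is taken over holomorphic $1$-forms, that is, over elements of $\cF'_0(b)$, modulo periods $\Lambda'_b$. This map is holomorphic in $(b,x)$, since the $1$-forms vary holomorphically and the path can be chosen locally continuously. On each fiber it is the classical isomorphism of a complex torus with its Albanese. A fiberwise biholomorphic holomorphic map over $B'$ is a biholomorphism, which gives the claimed isomorphism.

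The main obstacle is the torsor step: making sure $X'_0$ really is a holomorphic torsor under $\mathscr T'$. This requires a holomorphic action map $\mathscr T'\times_{B'}X'_0\to X'_0$, which I would build from the translation action of $\operatorname{Alb}$ on each fiber via flows of the holomorphic vector fields dual to $\cF'_0$. If that construction proves awkward, the fallback is to obtain local holomorphic sections from submersivity of $f'_0$. Two local sections differ by a local section of $\cO(\mathscr T')$, obtained by taking the Abel–Jacobi difference. This produces the Čech cocycle directly, and the $H^2$ vanishing above then lets us correct the local sections to a global one.
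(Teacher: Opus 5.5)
Your proposal is correct and follows the paper's argument. The paper uses the same sequence $0\to\Lambda'\to\cL'\to\cT'\to0$, kills $H^1(B',\cL')$ by Stein-ness and $H^2(B',\Lambda')$ because $B'$ has the homotopy type of a graph, and then corrects local holomorphic origins from the submersion theorem using $H^1(B',\cT'_0)=0$. That is exactly your fallback, so the action map you flag as the main obstacle is never needed, and your explicit Abel--Jacobi map simply spells out what the paper means by ``choosing these as origins identifies the local families with $\mathscr T'_{\cF'_0}$.''
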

\begin{proof}
The zero section of $\mathscr T'_{\cF'}$ is the image of the
zero section of $(\cF')^*$ under the quotient map \eqref{Jac}.
To obtain a section of $f'_0$, let $\cT'$ be the sheaf of
holomorphic sections of $\mathscr T'_{\cF'}\to B'$, and let
$\cL'$ be the sheaf of holomorphic sections of $(\cF')^*$.
For $\cF'=\cF'_0$ these sheaves will be denoted by $\cT'_0$
and $\cL'_0$; for $\cF'=\cF'_s$ they will be denoted by
$\cT'_s$ and $\cL'_s$. For any of these choices, the quotient
map gives an exact sequence of sheaves of abelian groups on $B'$:
\begin{align}\label{toruseq}
 0\longrightarrow\Lambda'\longrightarrow\cL'
 \longrightarrow\cT'\longrightarrow0.
\end{align}
Surjectivity here is local: a holomorphic section of the torus
family lifts locally to its vector bundle. Since $B'$ is Stein
and has the homotopy type of a graph, Cartan's theorem B and
the cohomology sequence give
\begin{align*}
 H^1(B',\cL')=0,\qquad H^2(B',\Lambda')=0,
 \qquad H^1(B',\cT')=0.
\end{align*}

Apply this to $\cF'_0$. By the holomorphic submersion theorem,
$f'_0:X'_0\to B'$ has holomorphic sections locally. Choosing these as origins
identifies the local families with $\mathscr T'_{\cF'_0}$.
On overlaps, two choices differ by a holomorphic translation,
and these translations form a \v{C}ech cocycle with values in
$\cT'_0$. The vanishing of $H^1(B',\cT'_0)$ allows us to
adjust the local origins so that they agree. This gives the
required section and isomorphism.
\end{proof}

We now use these choices of origin to attach the local fillings
and compare the resulting fibration with the original one.

\begin{proposition}\label{topology}
For every sufficiently small $s\ne0$ in the comb smoothing,
there is a proper holomorphic fibration $f_s:X_s\to B$ with
connected fibers and a diffeomorphism
\begin{align*}
 \Phi_s:X_0\longrightarrow X_s,\qquad f_s\circ\Phi_s=f_0.
\end{align*}
The integral monodromy and the isotopy classes of the affine
attachments are unchanged, as are the critical values,
multiplicities, and reduced critical fibers. In particular,
\begin{align*}
 \pi_1(X_s)\simeq\pi_1(X_0),\qquad
 H_k(X_s;\ZZ)\simeq H_k(X_0;\ZZ)\quad\hbox{for every }k.
\end{align*}
The integral Leray spectral sequences of $f_s$ and $f_0$, including
their cup products, torsion, and differentials, are identified.
Thus $X_s$ is diffeomorphic to $S^6$ whenever $X_0$ is.
\end{proposition}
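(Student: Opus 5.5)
The construction is glued from three pieces. Over a compact core $K\Subset B'$ we use the torus family $\mathscr T'_{\cF'_s}$ of \eqref{Jac}. Over disks around each critical value we use the fillings of Propositions~\ref{hexfill} and~\ref{finitequot}. These are glued along annular collars. Throughout, the underlying smooth model stays fixed and only the complex structure varies with $s$.

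\textbf{Step 1: choose the decomposition.} Cover $B$ by closed disks $\overline\Delta_j$ around the points of $\Sigma$ and the complement $K=B\setminus\bigcup_j\Delta_j$. By Proposition~\ref{comb}, for small $s\ne0$ the section $\sigma_s(K)$ lies in $D'$. On each disk, $\sigma_s(b_j)=\sigma_0(b_j)$, so the local fillings exist after shrinking the $s$-disk uniformly. Only finitely many disks occur, so one bound on $|s|$ serves all of them.

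\textbf{Step 2: build a smooth trivialization over $K$.} The key point is that $\mathscr M'$ in \eqref{fixedreal} is independent of $s$. Evaluation identifies $\Lambda'_\RR$ with the real bundle underlying $(\cF'_s)^*$. This gives a canonical diffeomorphism $\mathscr M'|_K\to\mathscr T'_{\cF'_s}|_K$ commuting with the projection and sending zero section to zero section. Composing with Lemma~\ref{sections}, which identifies $X_0'\simeq\mathscr T'_{\cF'_0}$, gives $\Phi_s$ over $K$. This map respects $f$ and fixes the monodromy, since the lattice $\Lambda'$ is literally unchanged.

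\textbf{Step 3: glue holomorphically on the collars.} On an annulus $A_j\subset\Delta_j\cap K$ the filling restricts to a torus family with the same periods $\cF_s$. The two families are therefore isomorphic up to a choice of origin, that is, up to a holomorphic section of $\cT'_s|_{A_j}$. I would do the gluing in two stages. First, fix a holomorphic section of each local filling over $A_j$; for the finite quotients this comes from the affine normalization \eqref{affine}, and for the hexagonal model from the identity point of $(\CC^*)^2$. Then use the vanishing $H^1(B',\cT'_s)=0$, proved in Lemma~\ref{sections} for any admissible $\cF'$, together with a Mayer--Vietoris/\v{C}ech argument to adjust origins so that the pieces match. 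The result is a complex manifold $X_s$ with proper $f_s$, and the gluing induces the prescribed identification of $\Lambda'$.

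\textbf{Step 4: assemble the global diffeomorphism and read off the conclusions.} Propositions~\ref{hexfill} and~\ref{finitequot} give, on each disk, a diffeomorphism from the original filling to the new one over the base, compatible with the boundary collars. Two points need care:
\begin{itemize}
\item The translations used to adjust origins depend continuously on $s$ and are trivial at $s=0$, in the sense that they deform from the original attachments.
\item Their isotopy classes are therefore unchanged: a translation section of $\cT'$ over an annulus is smoothly isotopic to the zero section within its connected component.
\end{itemize}
Matching the two diffeomorphisms on the collars via such isotopies gives a global $\Phi_s$ with $f_s\circ\Phi_s=f_0$. All the listed topological invariants then follow because $\Phi_s$ is a fibre-preserving diffeomorphism; in particular the integral Leray spectral sequences are identified by pullback.

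\textbf{Main obstacle.} The hardest step is the compatibility of origins and isotopy classes in Step 3, particularly at the multiple fibres. There the translation $v/n$ in \eqref{affine} is a discrete invariant, and a careless choice of gluing could change the topology, which is exactly the logarithmic-transform ambiguity. The approach I would try first is to construct everything for the family in $s$ on a connected parameter disk including $s=0$ near the handle. Discrete invariants are locally constant in a continuous family, so they agree with those of $X_0$. The convergence in Remark~\ref{combconv} justifies working on collars away from the teeth points $b_i$, provided the disks are chosen to avoid them.
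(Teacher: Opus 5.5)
Your overall architecture matches the paper's: the fixed smooth model $\mathscr M'$ over the core, the fillings' own diffeomorphisms $V_{j,0}\to V_{j,s}$, holomorphic gluing on annuli via a choice of origin, and an isotopy extension on the collars. The gap is the step you flag as the main obstacle: it is never carried out, and Step~3 as written does not handle it.

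Choosing origins at a fixed $s$ and invoking $H^1(B',\cT'_s)=0$ produces \emph{some} holomorphic gluing. It gives no control over which component the annular translation lies in. Modulo the connected group coming from $H^0(A_j,\cL'_s)$, translations are classified by their image under the connecting map $\delta$ into $H^1(A_j,\Lambda')$. Changing that class is precisely the logarithmic-transform regluing you worry about. The vanishing of $H^1(B',\cT'_s)$ is beside the point anyway: the annuli are disjoint, so there is no cocycle condition to meet. Step~4 then asserts that the translations depend continuously on $s$ and reduce to the original attachments at $s=0$. Your closing remark that discrete invariants are locally constant presupposes exactly such a continuous family of holomorphic annular identifications through $s=0$, and its existence is what has to be proved.

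The missing argument, as in the paper, runs over $A_j\times\Delta_s$. Away from the teeth, the periods and fillings there are holomorphic through $s=0$. The product is Stein and retracts onto a circle, so each filling carries a holomorphic origin on it. That origin must then be corrected by a holomorphic translation $a$ on the product with $a|_{s=0}=a_0$, where $a_0$ is the translation realizing the original attachment.

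Since $H^1(\cL')=0$, the map $\delta:H^0(A_j\times\Delta_s,\cT')\to H^1(A_j\times\Delta_s,\Lambda')\cong H^1(A_j,\Lambda')$ is surjective. So you can choose $a$ with $\delta(a)$ restricting to $\delta(a_0)$. Then $a_0-a|_{s=0}$ lifts to a section $\ell_0$ of $\cL'|_{s=0}$. This extends to the product by Cartan's theorem~B applied to $\mathcal{I}\cL'$, and adding its image to $a$ gives the required family. Only then are the attachments $\varphi_{j,s}$ a smooth family with $\varphi_{j,0}$ the original one, which is what your isotopy-matching step needs.

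A smaller point concerns the multiple fibres. There the zero section upstairs is not $G$-equivariant because of the translation $v/n$ in \eqref{affine}. So the normalization does not by itself give a section downstairs over $A_j$. Existence of some section comes instead from the same Stein/cohomology argument.
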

\begin{proof}
Fix a section of $f'_0$ as in Lemma~\ref{sections}, and write
$\mathscr T'_0=\mathscr T'_{\cF'_0}$. For each smoothed period
section, write $\mathscr T'_s=\mathscr T'_{\cF'_s}$. Its
distinguished section is the zero section $b\mapsto[0]$ of
Lemma~\ref{sections}.

Choose small disjoint disks around the critical values and annular
overlaps $A_i$ with $B'$, away from the comb teeth. Over
$A_i\times\Delta_s$, the new period sections and local fillings
extend holomorphically through $s=0$. On this product, let
$\cT'$ and $\cL'$ denote the section sheaves of the entire
family and its vector bundle; at a fixed parameter $s$ they are
the sheaves $\cT'_s$ and $\cL'_s$ just defined. We also write
$\Lambda'$ for the pulled-back homology local system.
The product is Stein and retracts onto a circle, so the proof of
Lemma~\ref{sections} again gives a holomorphic choice of origin in each local
filling over this product. It therefore identifies the filling with
the associated torus family on the overlap.

We must choose these identifications to agree with the original
attachments at $s=0$. Let $a_0$ be the translation over
$A_i\times\{0\}$ needed to obtain that agreement. The connecting
homomorphism in \eqref{toruseq} is surjective:
\begin{align*}
 \delta:H^0(A_i\times\Delta_s,\cT')\longrightarrow
 H^1(A_i\times\Delta_s,\Lambda').
\end{align*}
Restriction identifies the group on the right with
$H^1(A_i,\Lambda')$. Choose a translation $a$ on the product
whose image under $\delta$ restricts to $\delta(a_0)$. Then
$a_0-a|_{s=0}$ lifts to a section $\ell_0$ of $\cL'|_{s=0}$.
Cartan's theorem B, applied to $\mathcal I\cL'$ with
$\mathcal I$ the ideal of $s=0$, extends $\ell_0$ to a section
$\ell$ on the product. Adding the image of $\ell$ to $a$ gives
a holomorphic family of translations restricting exactly to $a_0$.
Using this translation makes the annular identifications agree
with the original ones at $s=0$.

Now glue $\mathscr T'_s$ to the perturbed local fillings by these
holomorphic identifications. One may use nested disks and annuli,
with gluing performed on collars; this gives a Hausdorff complex
manifold. Compactness follows by covering $B$ by a compact part
of $B'$ and smaller closed disks around the critical values, over
which the pieces are proper. The resulting map $f_s:X_s\to B$
is holomorphic, proper, and has connected fibers.

To construct the diffeomorphism, let $B^\circ$
be the complement of the interiors of smaller disks around the
critical values. By \eqref{fixedreal} and the chosen section of
$f'_0$, the parts of the original and new fibrations lying over
$B^\circ$ are smoothly identified with the same compact manifold
with boundary
\begin{align*}
 W=\mathscr M'|_{B^\circ}.
\end{align*}
Let $V_{i,s}$ denote the local filled closed disks. The constructions
above give diffeomorphisms
\begin{align*}
 a_{i,s}:V_{i,0}\longrightarrow V_{i,s},\qquad a_{i,0}=\id,
\end{align*}
preserving the maps to the base. Such trivializations also follow
directly by integrating lifts of parameter vector fields that
annihilate $t$; compactness ensures the required flows exist.
They can be prescribed on boundary collars, where the fibers are smooth.

After these identifications, write the boundary attachments as
\begin{align*}
 \varphi_{i,s}:\partial V_{i,0}\longrightarrow\partial_i W.
\end{align*}
They are a smooth family of diffeomorphisms, including at $s=0$,
and $\varphi_{i,0}$ is precisely the original attachment.
For a short path from $0$ to $s$, the maps
$\varphi_{i,s}\varphi_{i,0}^{-1}$ form an isotopy of $\partial_i W$
starting at the identity. Extend it to a diffeomorphism $K_s$ of
$W$, supported in disjoint boundary collars. This extension can
preserve the base: transport the boundary isotopy along the radial
coordinate of each collar and cut off its isotopy parameter there.
Now $K_s$ on $W$ and $a_{i,s}$ on each $V_{i,0}$ fit together,
because on a boundary point $p$ they satisfy
\begin{align*}
 K_s(\varphi_{i,0}(p))=\varphi_{i,s}(p).
\end{align*}
They define a diffeomorphism $\Phi_s:X_0\to X_s$ over $B$.

The fixed homology local system identifies the integral monodromy.
The local filling propositions preserve the critical values,
multiplicities, and reduced critical fibers, and the boundary
isotopy above preserves the isotopy classes of the affine
attachments. The claims about fundamental groups and integral
homology follow from $\Phi_s$. Since $f_s\circ\Phi_s=f_0$, this
diffeomorphism also identifies the derived direct images of the
constant sheaf, and hence their Leray spectral sequences with
products. This proves all the claims.
\end{proof}

\begin{remark}
There is no global holomorphic family of compact manifolds joining
$X_s$ to $X_0$ in this argument. At $s=0$, the global period
object is the comb, with entire conics over the $b_i$, rather than
a section. The identification with $X_0$ is a smooth one; it does
not imply constancy of the coherent direct-image degree.
\end{remark}

For each sufficiently small $s\ne0$, Proposition~\ref{localquotient}
now gives the global identification
\begin{align}\label{EisQ}
 E_s:=R^1f_{s*}\OO_{X_s}\simeq\sigma_s^*\mathcal{Q}=\cH/\cF_s .
\end{align}
This also holds for the original fibration when $s = 0$, where $\sigma_0$ is the original period section (not the comb). 
The calculation in Subsection~\ref{conics} now proves the degree change formula
\begin{align} 
\label{degreechange}
 \deg E_s =\deg E_0+2m,
\end{align}
which will be used below in Section~\ref{applications}.

%%%%%%%%%%%%%%%%%%%%%%%%%%%%%%%%%%%%%%%%%%%5
\section{Deformations}\label{s:deformations}

Fix a sufficiently small smooth section $\sigma_m$ obtained from the
positive comb in Proposition~\ref{comb}. In this section the base point
of the deformation is this smoothed section, not the nodal comb.

\begin{lemma}\label{normalbundle}
Defining $\cF_m=\sigma_m^*\mathcal{S}$ and $E_m=\sigma_m^*\mathcal{Q}$, the normal
bundle of $\sigma_m(B)$ satisfies
\begin{align}\label{parambundles}
 N_m:=N_{\sigma_m(B)/Y}\simeq\cF_m^*\otimes E_m.
\end{align}
The same identification holds for every holomorphic section of
$Y\to B$, including the original section $\sigma_0$.
\end{lemma}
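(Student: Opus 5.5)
The plan is the standard computation of the normal bundle of a section of a projective (Grassmannian) bundle. For any holomorphic section $\sigma:B\to Y$ of the smooth morphism $p:Y\to B$, we have $p\circ\sigma=\id_B$. The tangent bundle of $Y$ along $\sigma(B)$ then fits into the relative tangent sequence
\begin{align*}
 0\longrightarrow T_{Y/B}\longrightarrow T_Y\longrightarrow p^*T_B\longrightarrow0.
\end{align*}
The differential $d\sigma:T_B\to\sigma^*T_Y$ splits the pullback of this sequence, since $dp\circ d\sigma=\id$. Its image is exactly $T_{\sigma(B)}$, identified with $T_B$. Hence the normal bundle satisfies
\begin{align*}
 N_{\sigma(B)/Y}=\sigma^*T_Y/d\sigma(T_B)\simeq\sigma^*T_{Y/B}.
\end{align*}
Concretely, the composite $\sigma^*T_{Y/B}\to\sigma^*T_Y\to N_{\sigma(B)/Y}$ is an isomorphism. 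It is injective because $T_{Y/B}\cap d\sigma(T_B)=0$, as $dp$ kills the first and is injective on the second, and the ranks agree (both are four).

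The next step is to identify the relative tangent bundle of the relative Grassmannian. Using the universal sequence \eqref{univ} of Proposition~\ref{univextension}, the standard identification $T_{\Gr(2,V)}\simeq\sHom(\mathcal S,\mathcal Q)=\mathcal S^*\otimes\mathcal Q$ globalizes to
\begin{align*}
 T_{Y/B}\simeq\mathcal S^*\otimes\mathcal Q.
\end{align*}
I would check this locally over $B$: a trivialization of $\cH$ identifies $Y$ with $\Gr(2,\CC^4)$ times a disk, and $T_{Y/B}$ with the Grassmannian tangent bundle. The identification with $\sHom(\mathcal S,\mathcal Q)$ is intrinsic, via the derivative of a curve of planes composed with the quotient map. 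It is therefore compatible with the transition functions of $\cH$ and glues over all of $B$. This step uses only that $\cH$ is a holomorphic vector bundle on all of $B$. The logarithmic poles of $\nabla$ at $\Sigma$ play no role, which is why the statement holds across the critical values.

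Pulling back by $\sigma$ and using Proposition~\ref{univextension}, namely $\sigma^*\mathcal S=\cF_\sigma$ and $\sigma^*\mathcal Q\simeq\cH/\cF_\sigma$, gives
\begin{align*}
 N_{\sigma(B)/Y}\simeq\cF_\sigma^*\otimes\cH/\cF_\sigma.
\end{align*}
Specializing to $\sigma=\sigma_m$ yields \eqref{parambundles}, and specializing to $\sigma=\sigma_0$ yields the second claim.

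No step is a serious obstacle. The only point needing care is the intrinsic, gluing-compatible description of $T_{Y/B}$ as $\sHom(\mathcal S,\mathcal Q)$ over the critical values. This is routine once one notes that the construction is functorial in the vector bundle $\cH$.
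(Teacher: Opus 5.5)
Your proposal is correct and follows essentially the same route as the paper. Both arguments use the splitting of the tangent sequence by $d\sigma$ to get $N_{\sigma(B)/Y}\simeq\sigma^*T_{Y/B}$, then the intrinsic identification $T_{Y/B}\simeq\sHom(\mathcal{S},\mathcal{Q})$ on the relative Grassmannian, and finally pull back along $\sigma$.
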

\begin{proof}
Let $T_{Y/B}=\ker(dp)$ be the tangent bundle along the fibers
of the Grassmannian bundle $p:Y=\operatorname{Gr}_B(2,\cH)\to B$.
At a plane $W\subset\cH(b)$, nearby planes can be written as
graphs of linear maps from $W$ to a chosen complement. Thus the
tangent space to the Grassmannian at $W$ is naturally
$\Hom(W,\cH(b)/W)$. As $W$ varies, these identifications give
$T_{Y/B}\simeq\sHom(\mathcal{S},\mathcal{Q})$. For any holomorphic section
$\sigma:B\to Y$, the identity $p\circ\sigma=\id_B$ shows
that $d\sigma$ splits the tangent sequence along its image.
Consequently,
\begin{align*}
 N_{\sigma(B)/Y}\simeq\sigma^*T_{Y/B}
 \simeq(\sigma^*\mathcal{S})^*\otimes\sigma^*\mathcal{Q}.
\end{align*}
Taking $\sigma=\sigma_m$ proves \eqref{parambundles}.
\end{proof}

As before, we regard $\Sigma$ as a reduced divisor. In this section
it is the set of critical values, with no additional regular values.

\begin{lemma}\label{paramspace}
The germ $T_m$ of sections of $Y\to B$ agreeing with $\sigma_m$ at
$\Sigma$ is smooth at $\sigma_m$, with tangent space
\begin{align}\label{paramtangent}
 V_m=H^0(B,N_m(-\Sigma)).
\end{align}
Writing $c_m=\deg E_m$, $h=\deg\cH$, and $r=|\Sigma|$, its
dimension is
\begin{align}\label{paramdim}
 d_m=4+4c_m-2h-4r.
\end{align}
After shrinking to a polydisc centered at $\sigma_m$, these sections
give a proper holomorphic submersion $\pi:\mathscr X\to T_m$ with
central fiber $X_m$, and a holomorphic map $F:\mathscr X\to B$.
Its restrictions $f_u:X_u\to B$ are the fibrations of the
construction, with critical values $\Sigma$.
\end{lemma}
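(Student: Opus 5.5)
The plan is to treat $T_m$ as the germ at $\sigma_m$ of the space of holomorphic sections of the smooth projective morphism $p:Y\to B$ with the prescribed values at $\Sigma$. This is the relative Hom/Douady space of $\sigma_m(B)$ in $Y$ with fixed marked points. Its tangent space at $\sigma_m$ is $H^0(B,N_m(-\Sigma))$, where by Lemma~\ref{normalbundle} we have $N_m\simeq\sigma_m^*T_{Y/B}$. Its obstructions lie in $H^1(B,N_m(-\Sigma))$, which vanishes by \eqref{normalvan} in Proposition~\ref{comb}. Hence $T_m$ is smooth with tangent space $V_m$. Concretely, one can realize $T_m$ by the implicit function theorem: write sections near $\sigma_m$ as exponentials of sections of $N_m(-\Sigma)$ in tubular charts, using that $\sigma_m(B)$ is a smooth rational curve. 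The vanishing of $H^1$ makes the relevant linearized operator surjective.

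For the dimension, I would compute $\chi$ by Riemann--Roch on $\PP^1$. Since $H^1(B,N_m(-\Sigma))=0$, we have $d_m=\chi(N_m(-\Sigma))=\operatorname{rank}+\deg$, and the rank is $4$. From the sequence $0\to\cF_m\to\cH\to E_m\to0$ of Proposition~\ref{univextension}, $\deg\cF_m=h-c_m$. Therefore
\begin{align*}
 \deg(\cF_m^*\otimes E_m)=2c_m-2(h-c_m)=4c_m-2h.
\end{align*}
Twisting by $-\Sigma$ on a rank-four bundle subtracts $4r$. This gives $d_m=4+4c_m-2h-4r$.

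For the family, I would rerun the constructions of Sections~\ref{fillings} and~\ref{glue} with the holomorphic parameter $u\in T_m$ in place of $s$, and this is the step I expect to be the main obstacle.
\begin{enumerate}
\item Every $u$ agrees with $\sigma_m$ at $\Sigma$, so Propositions~\ref{hexfill} and~\ref{finitequot} apply verbatim to the holomorphic family $\sigma_u$ over small disks, giving fillings holomorphic in $u$. This uses the uniform invertibility of $\Im\tau$ in the hexagonal case and the equivariant lift \eqref{lift} in the finite case.
\item On the compact complement $B^\circ$, the image $\sigma_m(B^\circ)$ lies in the open period domain. After shrinking $T_m$ to a polydisc, all $\sigma_u(B^\circ)$ stay in $D'$, so the tori $\mathscr T'_{\cF'_u}$ form a holomorphic family over $B'\times T_m$ near $B^\circ$.
\item Gluing over the annuli $A_i\times T_m$ uses the argument of Proposition~\ref{topology}. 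These products are Stein and retract to circles, so Lemma~\ref{sections} and Cartan's theorem B give holomorphic choices of origin extending those at $u=\sigma_m$.
\end{enumerate}

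The result is a complex manifold $\mathscr X$ with maps $\pi$ and $F$. The map $\pi$ is proper after shrinking, because the pieces are proper over compact pieces of $B$. It is a submersion because each piece is: the fillings are submersive over the parameter space, and the smooth part is locally a product. Its fibers are the fibrations $f_u$ with critical values $\Sigma$, and the fiber over $\sigma_m$ is $X_m$.
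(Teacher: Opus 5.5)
Your proposal is correct and follows essentially the same route as the paper: smoothness from the vanishing \eqref{normalvan} of the obstruction space $H^1(B,N_m(-\Sigma))$, the dimension via Riemann--Roch using $\deg\cF_m=h-c_m$, and the family obtained by rerunning the local fillings with a parameter polydisc and the annular gluing of Proposition~\ref{topology} over the Stein products $A_i\times T_m$. The only addition is your explicit implicit-function-theorem realization of the section germ, which the paper leaves implicit.
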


\begin{proof}
Infinitesimal deformations of a section with its values fixed at $\Sigma$
are sections of $N_m(-\Sigma)$, and their obstructions lie in
$H^1(B,N_m(-\Sigma))$. The latter group vanishes by
\eqref{normalvan}. Thus the section germ is smooth with the stated
tangent space. This is a space of sections, not of parametrized
curves, so no reparametrizations of the handle are counted.
The exact sequence
\begin{align*}
 0\longrightarrow\cF_m\longrightarrow\cH
 \longrightarrow E_m\longrightarrow0
\end{align*}
gives $\deg\cF_m=h-c_m$. Hence
\begin{align*}
 \rank N_m=4,\qquad
 \deg N_m=2c_m-2(h-c_m)=4c_m-2h.
\end{align*}
Riemann--Roch on $B=\PP^1$, together with the vanishing above, gives
$h^0(N_m(-\Sigma))=4+\deg N_m-4r$, proving \eqref{paramdim}.

The local fillings of Propositions~\ref{hexfill} and~\ref{finitequot}
work with a parameter polydisc in place of a disk. Indeed, the
period matrices depend holomorphically on all parameters and are
uniformly bounded after shrinking. The values at $\Sigma$ are fixed,
so the lifts \eqref{lift} and \eqref{liftfour} still extend over the
covering disks. The admissibility and properness estimates in those
proofs are uniform on a smaller polydisc. On the compact complement
of the critical disks, admissibility follows from openness. The local
equations $t=x_1x_2x_3$ and $t=w^n$ are independent of the
parameters, so the filled spaces project submersively to $T_m$.

The gluing proof of Proposition~\ref{topology} also applies with
$T_m$ in place of $\Delta_s$: each annular product $A_i\times T_m$
is Stein and retracts onto a circle. The argument with
\eqref{toruseq} therefore gives holomorphic annular identifications
extending the chosen ones for $X_m$ at $u=0$. Gluing on nested
collars gives a Hausdorff holomorphic family, and the finite cover
of $B$ by a compact part of $B'$ and closed critical disks proves
properness over a smaller parameter polydisc. The projection is a
submersion both over $B'$ and in the filled charts.
\end{proof}

\begin{proposition}\label{effective}
If $r\geq3$, the family in Lemma~\ref{paramspace} gives
\begin{align}\label{mainmoduli}
 h^1(X_m,\Theta_{X_m})\geq
 \dim\operatorname{Def}(X_m)\geq d_m.
\end{align}
\end{proposition}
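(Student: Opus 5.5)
The strategy is to show that the Kodaira--Spencer map of the family $\pi:\mathscr X\to T_m$ from Lemma~\ref{paramspace} is injective at $u=0$. Write it as
\begin{align*}
 \kappa: V_m=T_0T_m\longrightarrow H^1(X_m,\Theta_{X_m}).
\end{align*}
Since $T_m$ is a smooth germ of dimension $d_m$, versality gives a classifying map $T_m\to\operatorname{Def}(X_m)$ whose differential is $\kappa$ composed with the Zariski tangent space inclusion. Injectivity of $\kappa$ then makes the classifying map an immersion of germs. Its image is a $d_m$-dimensional germ in $\operatorname{Def}(X_m)$, so $\dim\operatorname{Def}(X_m)\geq d_m$. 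The first inequality in \eqref{mainmoduli} is the general fact that the Kuranishi germ embeds in $H^1(X_m,\Theta_{X_m})$. It therefore remains to prove injectivity of $\kappa$; to be safe I would also show that distinct nearby parameters give non-biholomorphic fibers, up to a finite ambiguity, which yields the same dimension count.

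To prove injectivity I would recover the period section from the complex structure of $X_u$.

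First, the fibration $f_u$ must be intrinsic. The fibration is the algebraic reduction, or at least all fibrations of nearby $X_u$ onto curves are compatible. Concretely, the fibers are the connected components of the level sets of meromorphic functions, and for small deformations $f_m$ deforms uniquely to $f_u$, because $H^0$ of the normal bundle of a general fiber, which is trivial, is one-dimensional, matching the base. An infinitesimal deformation $v\in V_m$ with $\kappa(v)=0$ therefore preserves the fibration up to an automorphism of $B=\PP^1$. That automorphism must fix the critical value set $\Sigma$, and the hypothesis $r\geq3$ is used exactly here: a projective automorphism fixing three or more points is trivial, even infinitesimally, so the induced map on $B$ is the identity.

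Second, with the base fixed pointwise, the relative period map over $B'$ is determined by $X_u$. At each $b\in B'$ the Hodge plane $H^{1,0}(X_{u,b})\subset\cH(b)$ is canonical. The local system $\cH'$ is fixed and its identification is rigid, since monodromy-compatible changes of marking are discrete. So a trivial first-order deformation of $X_m$ compatible with $f$ induces the zero first-order variation of $\sigma_u|_{B'}$. That variation is exactly the image of $v$ in $H^0(B',N_m)$, and since $N_m(-\Sigma)\subset N_m$ is torsion-free, $v=0$.

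The main obstacle is the first step: showing rigorously that the Kodaira--Spencer class controls the fibration, so that a trivial deformation of $X_m$ induces a trivial deformation of the pair $(X_m,f_m)$ modulo $\operatorname{Aut}(B,\Sigma)$. I would argue as follows.
\begin{enumerate}
\item Since $f_m^*\cO(1)$ has sections and $H^1(X_m,\cO_{X_m})$ may obstruct nothing here, the fibration deforms along any deformation; more simply, use $f_{m*}\cO_{X_m}=\cO_B$ and Horikawa-type deformation theory of maps.
\item The obstruction to lifting a vector field on $X_m$ versus $B$ lies in $H^1(X_m,f_m^*T_B)\simeq H^1(B,T_B\otimes f_*\cO)\oplus H^0(B,T_B\otimes R^1f_*\cO)$, by the Leray isomorphisms used in Proposition~\ref{degrees}.
\end{enumerate}
If this map-theoretic route gets stuck, I would fall back on showing that $\kappa(v)=0$ forces the family to be locally trivial, hence biholomorphisms $X_u\simeq X_m$. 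These biholomorphisms preserve the algebraic reduction and therefore induce automorphisms of $B$ fixing $\Sigma$, and $r\geq3$ again rigidifies them.
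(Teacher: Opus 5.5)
Your primary route is different from the paper's. It can be made to work, and it proves more, but step~1 needs to be restated. The paper never passes to first order. It applies Proposition~\ref{analytic} to every $f_u$, so a biholomorphism $X_u\simeq X_m$ covers one of the finitely many automorphisms of $(B,\Sigma)$; this is where $r\geq3$ enters. Such a biholomorphism also induces one of countably many isomorphisms of the fixed integral local systems, and each such pair determines $\sigma_u$, hence $u$. Thus the fiber over $0$ of the classifying map $T_m\to\operatorname{Def}(X_m)$ is a countable analytic germ, hence zero-dimensional, and the dimension inequality for holomorphic maps gives $\dim\operatorname{Def}(X_m)\geq d_m$. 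This is essentially your ``to be safe'' argument, with countable rather than finite ambiguity, which suffices. Your Kodaira--Spencer argument instead makes the classifying map an immersion, so $V_m$ injects into $H^1(X_m,\Theta_{X_m})$. It trades the paper's global inputs (algebraic reduction of each $X_u$, discreteness of markings) for a first-order computation.

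Step~1 as written leans on the wrong statement. You neither need nor have justified that $f_m$ extends along arbitrary deformations of $X_m$. By Leray, the obstruction group is $H^1(X_m,f_m^*T_B)\simeq H^0(B,E_m\otimes T_B)\neq0$, and whether every small deformation keeps a fibration is left open in Section~\ref{future}. Since $\mathscr X$ already carries $F$, trivializing the first-order family along $v$ turns $F$ into a first-order deformation of the map $f_m$ with fixed source and target. These form $H^0(X_m,f_m^*T_B)=H^0(B,T_B\otimes f_{m*}\cO_{X_m})=H^0(B,T_B)$, so $F_\epsilon=(\id+\epsilon\xi)\circ f_m$.

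That $\xi$ vanishes on $\Sigma$ needs a scheme-theoretic argument. Write $\xi=g(t)\,\partial_t$ near $b\in\Sigma$. The parameter-independent local equations $t=x_1x_2x_3$ and $t=w^n$ put $t\circ F$ in the ideal of the relative critical subscheme. For $(t+\epsilon g(t))\circ f_m$ this subscheme is the same nonempty one as for $t\circ f_m$, since $1+\epsilon g'$ is a unit, and this forces $g(0)=0$. Then $\xi\in H^0(\PP^1,\OO(2-r))=0$.

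Step~2 then goes through: the trivialization is topologically the identity, so it respects the fixed marking, and each fiber's period derivative is the cup product with its zero Kodaira--Spencer class. Discard the fallback, however: vanishing of the Kodaira--Spencer class at a single point does not make a family locally trivial.
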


\begin{proof}
We first show that only countably many parameters $u\in T_m$
can give a manifold biholomorphic to $X_m$. Every $X_u$ is
diffeomorphic to $X_m$, hence to $S^6$, by Ehresmann's theorem.
The proof of Proposition~\ref{analytic} uses only
$H^2(X_u,\ZZ)=0$, connected fibers, and a general fiber that is a
complex torus. It therefore applies to every $f_u$, showing that
$f_u$ is the algebraic reduction.

Consequently, a biholomorphism $X_u\simeq X_m$ induces an
automorphism $\alpha$ of $B$ preserving $\Sigma$. There are only
finitely many such $\alpha$, since $r\geq3$. It also induces an
isomorphism of the integral cohomology local systems over $B'$,
covering $\alpha$. Under the fixed markings of the construction,
there are only countably many such isomorphisms: each is determined
by its value on one fiber, an integral invertible matrix. For each
pair consisting of $\alpha$ and a local-system isomorphism, preservation
of the Hodge planes determines the period section $\sigma_u$ on
$B'$, and hence on $B$. Since $T_m$ parametrizes sections, this
determines at most one $u$. This proves the countability claim.

The holomorphic family supplies a classifying map of germs
\begin{align*}
 \kappa:(T_m,0)\longrightarrow
 (\operatorname{Def}(X_m),0).
\end{align*}
Its fiber over $0$ is an analytic subset consisting of parameters
for manifolds biholomorphic to $X_m$. It is countable by the
preceding argument, so it has dimension zero. The dimension
inequality for holomorphic maps gives
$\dim\operatorname{Def}(X_m)\geq\dim T_m=d_m$.
Finally, the Kuranishi germ is an analytic subspace of
$H^1(X_m,\Theta_{X_m})$, proving the other inequality
in \eqref{mainmoduli}.
\end{proof}

%%%%%%%%%%%%%%%%%%%%%%%%%%%%%%%%%%%%%%%%%
\section{Applications}\label{applications}
We now apply the construction to the Alp\"oge-Claude configuration 
\cite{AC,Engel}. We first compute the bundles needed for the
comb-smoothing bound and obtain the sufficient bound $m\geq12$.
We then establish the Hodge numbers and deformation estimate in
Theorem~\ref{t:main}. We also describe the curves and divisors,
identify the algebraic reduction, and prove that all plurigenera vanish
before proving the blow-up corollary for $\CP^3$.

We recall the construction from 
\cite[Sections~2--3]{Engel}.
Let $\pi:S\to B=\PP^1$ be the rational elliptic surface with
singular fiber configuration $\mathrm{IV}^*\,\mathrm{III}\,\mathrm{I}_1$, 
together with zero section $O$ and a generator $P$ of its Mordell--Weil group.
The complement $M^\times$ of the zero section in
$M=\OO_S(P-O)$ is a principal $\CC^*$-bundle over $S$.
On a smooth elliptic fiber $S_b$, the restriction of $M$ has
degree zero. The corresponding surface $M^\times|_{S_b}$ has
universal cover $\CC^2$ and a rank-three period lattice.
A fiberwise linear lift of translation by $-6P$ 
gives a cyclic action
on $M^\times|_{\pi^{-1}(B')}$. After multiplying the lift by a
sufficiently small nonzero scalar, its quotient is the smooth
two-torus family $f'_0:X'_0\to B'$: the cyclic quotient adds
the fourth period. The local fillings then extend this family
over $\Sigma$.

The positively oriented circle in each $\CC^*$ fiber gives a
nonzero homology class $\delta_b\in H_1(X_{0,b},\ZZ)$.
Multiplication by a nonzero complex number preserves this circle
class, and the lifted translations commute with the principal
$\CC^*$ action. Thus these classes are preserved by parallel
transport and define a single section $\delta$ of the homology
local system $\Lambda'$ over $B'$. This is the circle class used
below. Holomorphic differentials on $S_b$ pull back to
$M^\times|_{S_b}$ and descend through the cyclic quotient,
because translation on an elliptic curve preserves its
holomorphic differentials. Their integrals around $\delta_b$
are zero.

\subsection{The initial bundles}\label{boundexplicit}

Integration around the circle class defines the holomorphic map
\begin{align*}
 \cF'_0\longrightarrow\OO_{B'},\qquad
 \omega\longmapsto\int_{\delta}\omega.
\end{align*}
It is nonzero on every fiber over $B'$. Indeed, if all holomorphic
one-forms integrated to zero on the real class $\delta_b$, their
conjugates would also do so. Since these two spaces span
$H^1(X_{0,b},\CC)$, this would imply $\delta_b=0$.
The map is therefore surjective, and its kernel is precisely the
line of elliptic differentials described above.

We next check its extension across the critical values. Near a
multiple fiber, the branched cover $t=w^n$ gives a smooth torus
family, as in case~(ii) of Proposition~\ref{periodextension}.
The class $\delta$ extends as a nonzero homology class on this
cover and is fixed by the cyclic action. The preceding argument
therefore gives a local holomorphic relative one-form whose integral on
$\delta$ is one. Averaging this form over the cyclic group gives
an invariant form with the same integral, since the action fixes
$\delta$. Consequently integration defines a surjection to
$\OO_{\Delta_t}$ on the invariant extension $\cF_0$.

At the hexagonal fiber, the monodromy fixes exactly the span of
the angular classes $A_1,A_2$ in Subsection~\ref{localmodels}.
Thus $\delta=a_1A_1+a_2A_2$ for constants $a_1,a_2$ not both
zero. On the punctured disk, the two Hodge generators in
\eqref{extendedhodgeframe} have constant integrals $a_1,a_2$
on $\delta$. These values define the extended evaluation map
at the central value and prove surjectivity there
as well. 
We obtain a surjective map
$\cF_0\to\OO_{\PP^1}$; computing the extension of its kernel
will determine~$\cF_0$.

The usual Hodge line of the rational elliptic surface is
$\pi_*\omega_{S/B}=\OO_{\PP^1}(1)$; see
\cite[Theorems~12.1 and~12.3]{KodairaIII}. The extension of its
differentials as sections of $\cF_0$ differs at the two critical
values with multiple fibers, corresponding to $\mathrm{IV}^*$ and $\mathrm{III}$ on $S$.
At $t=0$, take $t=w^3,\ x=w^4X,\ y=w^6Y$. The good-reduction
equation and differential are
\begin{align*}
 Y^2=X^3-3w(w^3-1)X+2(w^3-1)^2,\qquad
 \frac{dx}{y}=w^{-2}\frac{dX}{Y}.
\end{align*}
The deck generator $\gamma$, covering $w\mapsto\zeta_3w$,
satisfies $\gamma^*(dX/Y)=\zeta_3^2dX/Y$. Multiplication
by $w$ therefore gives the invariant generator
$w\,dX/Y=t\,dx/y$.
At $t=1$, take $t-1=w^4,\ x=w^2X,\ y=w^3Y$, obtaining
\begin{align*}
 Y^2=X^3-3(1+w^4)^3X+2w^2(1+w^4)^4,\qquad
 \frac{dx}{y}=w^{-1}\frac{dX}{Y}.
\end{align*}
Here $\gamma^*(dX/Y)=i\,dX/Y$, so the invariant generator
is $w^3dX/Y=(t-1)dx/y$.
At the nodal elliptic fiber underlying the hexagonal model, the
elliptic differential has its usual nonzero logarithmic extension,
so no additional modification occurs.
Consequently the kernel line is
$\OO_{\PP^1}(1)(-[0]-[1])=\OO_{\PP^1}(-1)$, and
\begin{align*}
 0\longrightarrow\OO_{\PP^1}(-1)\longrightarrow\cF_0
 \longrightarrow\OO_{\PP^1}\longrightarrow0
\end{align*}
splits because $H^1(\OO_{\PP^1}(-1))=0$.

We use the flat connection on $\cH'$ to compute the degree of
$\cH$. In the frame \eqref{logframe} at the hexagonal point,
\begin{align*}
 \nabla e_i=f_i\frac{dt}{2\pi i t},\qquad \nabla f_i=0,
\end{align*}
so the coefficient of $dt/t$ has trace zero. At the order-three
and order-four points, the explicit connections in the proof of
Proposition~\ref{periodextension} have diagonal coefficients
\begin{align*}
 (0,0,1/3,2/3),\qquad (0,0,1/4,3/4),
\end{align*}
respectively.
If $R_p$ denotes the coefficient of $dt/t$ at $t=0$
in a local frame near $p$, the degree formula is
\begin{align*}
 \deg\cH=-\sum_{p\in\Sigma}\tr R_p.
\end{align*}
Indeed, the induced connection on $\det\cH$ has coefficient
$\tr R_p$. In a meromorphic section of this line bundle, a zero
or pole of order $k$ adds $k\,dt/t$ to its connection form.
The residue theorem therefore gives the displayed formula.
Here the hexagonal point contributes zero and each of the other
two critical values contributes one. Thus $\deg\cH=-2$, and
$\deg E_0=\deg\cH-\deg\cF_0=-1$.
The Leray isomorphism \eqref{transgression} and
Lemma~\ref{nowhere} supply a
nowhere-vanishing section of $E_0$. It follows that
\begin{align*}
 E_0=\OO_{\PP^1}\oplus\OO_{\PP^1}(-1),\qquad
 \cF_0=\OO_{\PP^1}\oplus\OO_{\PP^1}(-1).
\end{align*}

These calculations also prove the initial Hodge numbers 
of $X_0$. 
\begin{proposition} 
\label{ACHodge}
For the Alp\"oge-Claude complex structure, we have
\begin{align}\label{ACHodgenum}
 h^{0,1}(X_0)=1,\quad h^{0,2}(X_0)=0,\quad h^{0,3}(X_0)=0,
\quad  \deg(E_0) = -1.
\end{align}
\end{proposition}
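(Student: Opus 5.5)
The plan is to combine the bundle computation just completed with Proposition~\ref{degrees} and the global identification \eqref{EisQ} at $s=0$. That identification, applied to the original section $\sigma_0$, gives
\begin{align*}
 R^1f_{0*}\OO_{X_0}\simeq\sigma_0^*\mathcal{Q}=\cH/\cF_0=E_0.
\end{align*}
(Equivalently, one can glue the local identifications of Propositions~\ref{hexfill} and~\ref{finitequot} with $s=0$ via Proposition~\ref{localquotient}.) So the degree of the first direct image is the degree of $E_0$ computed above. That computation gave $\deg\cH=-2$ from the residue formula; the three local contributions are $0$, $1/3+2/3$ and $1/4+3/4$. It also gave $\deg\cF_0=-1$ from the extension $0\to\OO_{\PP^1}(-1)\to\cF_0\to\OO_{\PP^1}\to0$. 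Hence $\deg E_0=\deg\cH-\deg\cF_0=-1$.

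Next I pin down the splitting type. Lemma~\ref{nowhere} supplies a nowhere-vanishing section of $E_0$, so $E_0$ has a trivial subbundle $\OO_{\PP^1}$. The quotient is a line bundle of degree $-1$, and the extension splits since $H^1(\OO_{\PP^1}(1))=0$. Thus $E_0=\OO_{\PP^1}\oplus\OO_{\PP^1}(-1)$, so $(a,b)=(-1,0)$ in \eqref{splittings}. This is exactly the second alternative of Proposition~\ref{degrees}. Since $X_0$ is diffeomorphic to $S^6$ with general fiber a complex two-torus, Theorem~\ref{allfree} and Proposition~\ref{degrees} apply. Formula \eqref{globalhodge} then gives $h^{0,1}=a+b+2=1$, $h^{0,2}=a+b+1=0$, and $h^{0,3}=0$.

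The main point needing care is the identification $R^1f_{0*}\OO_{X_0}\simeq\cH/\cF_0$ across the critical values, with no extra zeros or poles there. This is exactly what the local propositions give: at the hexagonal point, the invariant classes $\beta_1,\beta_2$ frame both sides by \eqref{centralcomparison}; at the multiple fibers it comes from invariant descent. I take them as established.

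A second delicate input is the computation of $\deg\cF_0$. It rests on two facts: the elliptic Hodge line $\pi_*\omega_{S/B}=\OO_{\PP^1}(1)$ loses one order at each of the two multiple-fiber points, and the evaluation on $\delta$ is surjective at all three critical values. Both were verified in the preceding computation.

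As a consistency check, $c=a+b=-1$, which is compatible with $c\geq-1$ and $H^3(X_0,\OO_{X_0})=0$. Also $\chi(\OO_{X_0})=1-1+0-0=0$, in agreement with Hirzebruch--Riemann--Roch.
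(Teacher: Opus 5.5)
Your proposal is correct and is essentially the paper's argument. The paper likewise takes $\deg E_0=-1$ and $E_0=\OO_{\PP^1}\oplus\OO_{\PP^1}(-1)$ from the preceding computation, with $E_0=R^1f_{0*}\cO_{X_0}$ via \eqref{EisQ} at $s=0$, and then reads off the Hodge numbers from $R^2f_{0*}\cO_{X_0}\simeq\det E_0$, $H^1(\PP^1,E_0)=0$, and the Leray isomorphisms \eqref{e:lerayiso}--\eqref{e:lerayseq}, which is exactly what \eqref{globalhodge} packages (and since that formula depends only on $a+b$, your splitting-type step is harmless but not needed for the Hodge numbers).
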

\begin{proof}
Indeed, Theorem~\ref{allfree} gives
\begin{align*}
 R^2f_{0*}\cO_{X_0}\simeq\det E_0
 \simeq\OO_{\PP^1}(-1).
\end{align*}
Since $E_0=\OO_{\PP^1}\oplus\OO_{\PP^1}(-1)$ and
$H^1(\PP^1,E_0)=0$, the Leray isomorphisms
\eqref{e:lerayiso} and exact sequence \eqref{e:lerayseq} yield \eqref{ACHodgenum}.
The degree formula follows from this. 
\end{proof}
The degree formula \eqref{degreechange} therefore gives $\deg E_m=2m-1$
after smoothing a comb with $m$ teeth.

\subsection{The smoothing bound}\label{smoothingbound}
By Lemma~\ref{normalbundle}, applied to $\sigma_0$, the normal
bundle to the original section is
$\sigma_0^*T_{Y/B}\simeq\cF_0^{\,*}\otimes E_0$.
We have
\begin{align}\label{originalnormal}
 N_{\sigma_0(B)/Y}
 =\cF_0^{\,*}\otimes E_0
 =\OO_{\PP^1}(-1)\oplus\OO_{\PP^1}^{\oplus2}\oplus\OO_{\PP^1}(1).
\end{align}
Fixing its value at all three critical values means $r=3$.
Bound~\eqref{bound} becomes
\begin{align*}
 m\geq(3-(-1))+3+3+(3-1)=12.
\end{align*}
Hence this argument gives $c=2m-1$ for every $m\geq12$.

\subsection{Hodge numbers and deformations}\label{nonnegative}
The sufficient bound above ensures \eqref{normalvan}, so
Proposition~\ref{effective} applies to the families just constructed.
We have $h=-2$, $r=3$, and $c_m=2m-1$, giving
\begin{align*}
 d_m=4+4(2m-1)-2(-2)-4\cdot3=8m-8
 \qquad(m\geq12).
\end{align*}

These calculations prove \eqref{intromoduli}.
Each additional conic increases $d_m$ by eight. Here we are counting
nearby final period sections with the critical values and limiting periods fixed,
not the separate choices used to produce a comb.

Since $X_0$ is diffeomorphic to $S^6$, 
Proposition~\ref{topology} gives the
same conclusion for $X_m$. Proposition~\ref{degrees} therefore applies
to every $f_m$. Here $c_0=-1$, so $c_m=2m-1$; for $m>0$ this excludes
the negative splitting alternative. Thus $a_m,b_m\geq0$ and
\eqref{globalhodge} gives
\begin{align*}
 h^{0,1}(X_m)=c_m+2=2m+1,\qquad
 h^{0,2}(X_m)=c_m+1=2m,\qquad h^{0,3}(X_m)=0.
\end{align*}
These are the Hodge-number identities in Theorem~\ref{t:main}.
Evaluation of global sections
of $E_m$ onto a fiber is surjective, so the restriction map
$H^1(X_m,\cO_{X_m})\to H^1(F,\cO_F)$ has rank two.
Different $m$ give nonbiholomorphic manifolds, distinguished by
$h^{0,1}$. Together with the deformation estimate above, this proves
the Hodge-number and deformation claims of Theorem~\ref{t:main}.
The algebraic-reduction claim is proved below. Note that the argument
does not determine $a_m$ and $b_m$ individually.

For comparison, any complex structure on $S^6$ satisfies
$h^{0,3}=0$ and $h^{0,1}=h^{0,2}+1\geq1$; see
\cite[Subsections~2.1--2.2]{Angella}. The original example $X_0$
attains the lower bound, while the manifolds $X_m$ realize
arbitrarily large values of both $h^{0,1}$ and $h^{0,2}$.

\subsection{Some other analytic properties}\label{analyticprops}

We next state the algebraic dimension and the absence of curves
projecting onto the base, and then compute the Kodaira dimension.

\begin{proposition}\label{analytic}
Every compact irreducible curve and every irreducible divisor in
$X_m$ is contained in a fiber of $f_m$. In particular, $f_m$
has no holomorphic section or multisection over $B$.
Moreover, $a(X_m)=1$, and $f_m:X_m\to B$ is the algebraic reduction.
The same conclusions hold for every member of the family in
Lemma~\ref{paramspace}.
\end{proposition}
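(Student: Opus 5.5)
The plan is to use only $H^2(X_m,\ZZ)=0$, connected fibers, and the fact that the general fiber is a complex two-torus. These are the hypotheses the later proof of Proposition~\ref{effective} says are used, so the argument will apply verbatim to every member of the family in Lemma~\ref{paramspace}.

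\emph{Divisors.} Let $D\subset X_m$ be an irreducible divisor. Since $H^2(X_m,\ZZ)=0$, the line bundle $\cO(D)$ has $c_1=0$. Its restriction to a general fiber $F$ (a two-torus) therefore has trivial Chern class. Suppose $D$ dominates $B$. Then $D\cap F$ is a nonzero effective divisor on $F$ whose class in $H^2(F,\ZZ)$ vanishes. This is impossible: on a compact K\"ahler surface a nonzero effective divisor has positive degree against a K\"ahler class, by Wirtinger. Hence $D$ does not dominate $B$, so its image is a point and $D$ lies in a fiber.

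\emph{Curves.} Let $\Gamma$ be an irreducible curve with $f_m(\Gamma)=B$; this case includes a section or multisection. I would argue via the homology class. Its class in $H_2(X_m,\ZZ)=0$ vanishes. On the other hand, a pullback $f_m^*\omega_B$ of a K\"ahler form on $B$ is a closed semipositive $(1,1)$-form, and it integrates positively over $\Gamma$ because $f_m|_\Gamma$ is surjective of positive degree. Since $[\Gamma]=0$, the integral must be zero, which is a contradiction. This step is essentially free.

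\emph{Algebraic dimension.} Pulling back meromorphic functions from $B$ gives $a(X_m)\ge1$. For the reverse inequality, suppose $a(X_m)\ge2$. I would first pass to a resolution of the algebraic reduction map, modifying $X_m$ only in a way that keeps $f_m$-fibers generically untouched, so that the reduction is a meromorphic map $X_m\dashrightarrow Z$ with $\dim Z\ge2$. Pulling back an ample divisor from $Z$ gives a divisor on $X_m$ whose restriction to the general fiber $F$ is not contained in fibers of $f_m$. Concretely, a meromorphic function $g$ not pulled back from $B$ has polar or zero divisor dominating $B$, unless $g$ is constant on general fibers. In the first case this contradicts the divisor step. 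In the second case $g$ factors through $B$, because the fibers are connected, and so $g$ comes from $B$. Therefore every meromorphic function is pulled back from $B$, $a(X_m)=1$, and $f_m$ is the algebraic reduction.

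The main obstacle is making the ``either the divisor dominates or $g$ is fiberwise constant'' dichotomy rigorous when the polar divisor is only defined after resolving indeterminacy. I would handle it by working with the zero and polar divisors of $g$ directly on $X_m$, which are honest divisors on the manifold, and then applying the divisor step.
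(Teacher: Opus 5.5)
Your proposal is correct and follows essentially the paper's route. Curves are handled by the vanishing of their class (equivalently, of $\deg\nu^*f_m^*\OO_B(1)$), divisors by the trivial Chern class of their restriction to a torus fiber, and meromorphic functions by the verticality of the polar divisor $P$. Since $P$ is an honest divisor on the manifold $X_m$, the ``obstacle'' you flag does not arise, and the detour through resolving the algebraic reduction map is unnecessary.

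The one real difference is the descent step. The paper chooses an effective $D$ on $B$ with $f_m^*D\geq P$ and applies the projection formula, $h\in H^0(X_m,f_m^*\OO_B(D))=H^0(B,\OO_B(D))$. This shows at once that $h$ comes from $B$ and is meromorphic at the finitely many special values, including the multiple fibers, where $f_m$ has no local holomorphic section. Your ``constant on general fibers, hence factors through $B$'' leaves that meromorphy implicit, but it follows in the same way from $f_{m*}\OO_{X_m}=\OO_B$.
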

\begin{proof}
Let $\nu:\widetilde C\to X_m$ be the normalization map of a
compact irreducible curve. Since $H^2(X_m,\ZZ)=0$,
\begin{align*}
 \deg\nu^*f_m^*\OO_B(1)=0.
\end{align*}
A nonconstant map $f_m\circ\nu:\widetilde C\to\PP^1$
would have positive degree. Thus the curve is contained in a fiber.

An effective divisor projecting onto $B$ would restrict to a nonzero
effective divisor on a general torus fiber. Its first Chern class
would be zero, again because $H^2(X_m,\ZZ)=0$, contradicting
its positive intersection with a K\"ahler class on the torus.
Every divisor is therefore vertical.

Let $h$ be a meromorphic function on $X_m$, with pole divisor $P$.
Choose an effective divisor $D$ on $B$ with $f_m^*D\geq P$.
The projection formula and $f_{m*}\OO_{X_m}=\OO_B$ give
\begin{align*}
 h\in H^0(X_m,f_m^*\OO_B(D))=H^0(B,\OO_B(D)).
\end{align*}
Thus every meromorphic function is pulled back from $B$, and the
meromorphic function field of $X_m$ is $\CC(\PP^1)$.
This proves the algebraic-reduction claim and $a(X_m)=1$.
The argument applies to every nearby member of the family in
Lemma~\ref{paramspace}, since Ehresmann's theorem makes each
of them diffeomorphic to $S^6$ and their general fibers are still tori.
\end{proof}

Consequently, a biholomorphism between two of these manifolds
identifies their fibrations up to an automorphism of the base.
The argument of Lemma~\ref{sections} also gives a holomorphic section of
$f'_m:X'_m=f_m^{-1}(B')\to B'$, but Proposition~\ref{analytic}
shows that no such section extends over $B$.

\begin{proposition}\label{canonical}
All plurigenera of $X_m$ vanish, so $\kappa(X_m)=-\infty$.
\end{proposition}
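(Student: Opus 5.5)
The plan is to show $H^0(X_m,K_{X_m}^{\otimes k})=0$ for every $k\geq1$ by restricting pluricanonical sections to the fibers of $f_m$ and using the divisor results of Proposition~\ref{analytic}.

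Start from a nonzero $\eta\in H^0(X_m,K_{X_m}^{\otimes k})$. On a smooth fiber $F$ the conormal bundle is trivial, so adjunction gives $K_{X_m}|_F\simeq K_F\otimes f_m^*K_B|_F\simeq\OO_F$. Since $\eta\neq0$, its restriction to a general fiber is a nonzero section of the trivial bundle, hence nowhere vanishing there. Consequently the zero divisor $Z=\operatorname{div}(\eta)$ is vertical, which is consistent with Proposition~\ref{analytic}.

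Next, $Z$ is an effective divisor representing $kK_{X_m}$. Its first Chern class vanishes because $H^2(X_m,\ZZ)=0$. Since the line bundle $\OO(Z)\simeq K_{X_m}^{\otimes k}$ is then topologically trivial, the key claim is that $K_{X_m}^{\otimes k}$ is \emph{holomorphically} trivial. Here I would use $H^1(X_m,\OO)=2m+1\neq0$: on $S^6$, $\operatorname{Pic}(X_m)\simeq H^1(X_m,\OO)$, which is far from trivial. So I would instead compute $K_{X_m}$ via the fibration.

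\begin{enumerate}
\item Over $B'$ the relative dualizing sheaf is $f_m^*\det(\cF_m)$, since $f_*\omega_{X/B}$ is the line of relative $2$-forms, which is $\Lambda^2$ of the Hodge bundle. Dually, $R^2f_*\OO\simeq\det E_m=\OO(2m-1)$.
\item Relative duality then gives $f_{m*}\omega_{X_m/B}\simeq(R^2f_{m*}\OO)^*\simeq\OO_{\PP^1}(1-2m)$.
\item Hence $f_{m*}K_{X_m}\simeq\OO(1-2m)\otimes K_B=\OO(-1-2m)$, which has no sections.
\end{enumerate}

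Because $K$ restricts trivially to general fibers and $f_*K$ is a line bundle, I would write $K_{X_m}=f_m^*L\otimes\OO(V)$. Here $V$ is a vertical correction supported on the multiple fibers, namely $2F_3^{\mathrm{red}}$ and $3F_4^{\mathrm{red}}$ by the canonical bundle formula for the free cyclic quotients, and $V$ is zero at the hexagonal fiber, which is a normal-crossing, Mumford-type fiber with trivial log canonical contribution. Then
\begin{align*}
 K_{X_m}^{\otimes k}=f_m^*L^{k}\otimes\OO(kV),
\end{align*}
and $f_{m*}\OO(kV)=\OO\bigl(\lfloor 2k/3\rfloor[0]+\lfloor 3k/4\rfloor[1]\bigr)$. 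This gives
\begin{align*}
 \deg f_{m*}K_{X_m}^{\otimes k}=k(-1-2m)+\lfloor 2k/3\rfloor+\lfloor 3k/4\rfloor<0,
\end{align*}
since $2m+1>17/12$. Therefore every plurigenus vanishes and $\kappa(X_m)=-\infty$.

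The main obstacle is making the canonical bundle formula rigorous in this non-Kähler setting. This requires identifying $f_*\omega_{X/B}$ with the dual of $R^2f_*\OO$, which is Grothendieck duality for a proper flat map with the base-change established in Theorem~\ref{allfree}. It also requires computing the local multiple-fiber corrections from the local models of Section~\ref{fillings}. Even a crude bound suffices here: the multiplicity corrections contribute at most $k(2/3+3/4)$ in degree, and this is dominated by $k(2m+1)$.
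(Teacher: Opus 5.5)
Your overall route is the paper's: identify $f_{m*}K_{X_m}\simeq\OO_{\PP^1}(-2m-1)$, write $K_{X_m}\simeq f_m^*(f_{m*}K_{X_m})\otimes\OO(Z)$ with $Z$ effective and supported on the critical fibers, bound the coefficients of $Z$ by the multiplicities, and finish with a degree count.

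Your computation of $f_{m*}K_{X_m}$ by relative duality, $f_{m*}\omega_{X_m/B}\simeq(R^2f_{m*}\OO)^{\vee}$, is correct. The paper gets the same number more cheaply. It uses only Serre duality on the compact manifold $X_m$ and the Leray spectral sequence: for $k\gg0$, $h^0(X_m,K_{X_m}\otimes f_m^*\OO(k))=h^3(X_m,f_m^*\OO(-k))=h^1(\PP^1,\OO(2m-1-k))$. So no analytic Grothendieck duality is needed. Your opening discussion of vertical zero divisors and $\operatorname{Pic}$ plays no role in the argument.

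The genuine problem is the vertical correction. The assertion $V=2F_3^{\mathrm{red}}+3F_4^{\mathrm{red}}$ ``by the canonical bundle formula for free cyclic quotients'' is false here. The coefficient $n-1$ arises when the deck generator acts trivially on relative $2$-forms, as for ${}_nI_0$ elliptic fibers coming from pure translations.

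In these models the linear part has eigenvalues $1,1,\zeta,\zeta^{-1}$ on $H^1$ of the central torus. Since $H^{0,1}=\overline{H^{1,0}}$, its eigenvalues on $H^{1,0}$ are $1,\zeta^{\pm1}$, so it acts on $H^{2,0}$ by $\zeta^{\pm1}$. Combined with $\gamma^*dw=\zeta\,dw$, the invariant local generator of $K$ is $w^j\,dw\wedge\omega$, for a suitable eigen-generator $\omega$ of relative $2$-forms, with $j=0$ or $j=n-2$, never $n-1$.

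So the only justification you offer for your ``crude bound'' (coefficients at most $n-1$) rests on a wrong formula. That bound is exactly what the proof needs. The fix is the paper's short argument. Every critical fiber is irreducible: the hexagonal one has normalization $dP_6$, and the others are torus quotients. So near a critical value $p$ one has $Z=aD$ with $f_m^*(p)=nD$. If $a\geq n$, then for a local frame $\alpha$ of $f_{m*}K_{X_m}$ the form $\alpha/t$ would be holomorphic on the smooth $X_m$. This would make $\alpha$ divisible by $t$ inside $f_{m*}K_{X_m}$, which is absurd.

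This gives $a\leq n-1$ at every critical value. It also gives $a=0$ at the reduced hexagonal fiber directly, with no log-canonical discussion. After that, either of two endings works. Your floor count gives $\deg f_{m*}K_{X_m}^{\otimes k}\leq k(-1-2m)+\lfloor 2k/3\rfloor+\lfloor 3k/4\rfloor<0$. Alternatively, the paper's cruder inclusion $K_{X_m}\hookrightarrow f_m^*\bigl(f_{m*}K_{X_m}\otimes\OO(\Sigma)\bigr)$ into a pullback of degree $2-2m<0$ finishes the proof.
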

\begin{proof}
The coherent sheaf $f_{m*}K_{X_m}$ is torsion-free on the smooth
curve $B$, and its rank is one because a smooth torus fiber has a
one-dimensional space of holomorphic two-forms. Thus
$f_{m*}K_{X_m}\simeq\OO_B(d)$ for some integer $d$.
By \eqref{cupiso}, $R^2f_{m*}\OO_{X_m}=\OO_B(2m-1)$.
For $k$ sufficiently large, the projection formula, ordinary
Serre duality on $X_m$, and the Leray spectral sequence give
\begin{align*}
 d+k+1
 &=h^0(B,f_{m*}K_{X_m}\otimes\OO_B(k))\\
 &=h^0(X_m,K_{X_m}\otimes f_m^*\OO_B(k))
 =h^3(X_m,f_m^*\OO_B(-k))\\
 &=h^1(B,\OO_B(2m-1-k))
 =k-2m.
\end{align*}
Consequently $d=-2m-1$.

The evaluation map $f_m^*(f_{m*}K_{X_m})\to K_{X_m}$ is an
isomorphism over the smooth fibers, since a nonzero holomorphic
two-form on a torus is nowhere vanishing. Its zero divisor $Z$ is
therefore effective and supported on the critical fibers, and
\begin{align*}
 K_{X_m}\simeq f_m^*(f_{m*}K_{X_m})\otimes\OO_{X_m}(Z).
\end{align*}
Every reduced fiber is irreducible: the hexagonal fiber has
normalization $dP_6$, and each reduced multiple fiber is a
quotient of a torus; see Section~\ref{fillings}.
Write $f_m^*(p)=nD$ at a critical value, and let $a$ be the
coefficient of $D$ in $Z$. Then $a<n$. Indeed, a local frame
of $f_{m*}K_{X_m}$ corresponds to a canonical form $\alpha$ on a
neighborhood of this fiber. If $a\geq n$, division by a base
coordinate $t$ vanishing at $p$ would leave $\alpha/t$ holomorphic,
since $X_m$ is smooth. This would express a frame of $f_{m*}K_{X_m}$
as $t$ times a holomorphic section of $f_{m*}K_{X_m}$, which is impossible.

It follows that $Z\leq f_m^*\Sigma$, with $\Sigma$ regarded as a
reduced divisor, and hence
\begin{align*}
 K_{X_m}\hookrightarrow
 f_m^*\bigl(f_{m*}K_{X_m}\otimes\OO_B(\Sigma)\bigr).
\end{align*}
The line bundle on the base has degree $-2m-1+3=-2m+2<0$.
Taking any positive tensor power and using $f_{m*}\OO_{X_m}=\OO_B$
therefore gives $H^0(X_m,K_{X_m}^{\otimes q})=0$ for every $q\geq1$.
\end{proof}

\subsection{Proof of Corollary~\ref{cpthree}}\label{blowups}
Let $X$ be a compact complex threefold diffeomorphic to $S^6$,
and choose $p\in X$. The underlying smooth manifold of its blow-up is
\begin{align*}
 \widehat X=\operatorname{Bl}_pX
 \cong S^6\mathbin{\#}\overline{\CP}^{\,3}
 \cong\overline{\CP}^{\,3}.
\end{align*}
Complex conjugation on $\CP^3$ reverses orientation, so
$\widehat X$ is diffeomorphic to the standard smooth $\CP^3$.

We recall the relation between deformations of a blow-up and
deformations of a manifold with a marked point. The exceptional
divisor satisfies
\begin{align*}
 E\cong\CP^2,\qquad N_{E/\widehat X}\cong\OO_{\CP^2}(-1),
 \qquad H^0(E,N_{E/\widehat X})=H^1(E,N_{E/\widehat X})=0.
\end{align*}
Kodaira's stability theorem \cite{KodairaStability} implies that
$E$ persists uniquely in small deformations of $\widehat X$.
Since $\CP^2$ is rigid and its normal bundle remains
$\OO_{\CP^2}(-1)$, the relative blow-down
\cite[Proposition~2]{FujikiBlowdown} contracts these divisors
to a holomorphic section in a deformation of $X$. Conversely,
blowing up such a section gives a deformation of $\widehat X$.
These constructions are inverse and remain compatible with
infinitesimal deformations.

For any deformation of $X$, choosing the marked point locally
introduces three coordinates and no further obstructions. Infinitesimal motions arising from
automorphisms are the values of global holomorphic vector fields.
Put
\begin{align*}
 r_p=\operatorname{rank}(\operatorname{ev}_p),\qquad
 \operatorname{ev}_p:H^0(X,\Theta_X)\longrightarrow T_pX,
 \quad V\longmapsto V(p).
\end{align*}
Thus the additional tangent directions form
$T_pX/\operatorname{im}(\operatorname{ev}_p)$, of dimension $3-r_p$.
The standard comparison of semiuniversal deformation spaces
\cite[Exercise~15.7]{HartshorneDef} converts this unobstructed
choice of point into the local dimension formula
\begin{align*}
 \dim\operatorname{Def}(\widehat X)
 =\dim\operatorname{Def}(X)+3-r_p.
\end{align*}

For $X=X_m$, Proposition~\ref{analytic} implies that every
automorphism preserves $f_m$ up to an automorphism of $B$.
The connected group $\operatorname{Aut}^0(X_m)$ fixes each
critical value, since the set of critical values is finite.
There are three such values,
so its induced action on $\PP^1$ is trivial. For $p\in X_m$, the orbit
map $a_p:\operatorname{Aut}^0(X_m)\to X_m$, $g\mapsto g(p)$, has
differential $\operatorname{ev}_p$ at the identity. Left translation in
the group corresponds to composition with an automorphism of $X_m$,
so this map has constant rank $r_p$. By the holomorphic constant-rank
theorem, its local images have complex dimension $r_p$. They lie in
the fiber $f_m^{-1}(f_m(p))$, which has dimension two even when it
is singular. Hence $r_p\leq2$. Consequently
\begin{align*}
 \dim\operatorname{Def}(\widehat X_m)
 \geq\dim\operatorname{Def}(X_m)+1\geq8m-7
 \qquad(m\geq12),
\end{align*}
which proves \eqref{cpmoduli}.

Finally, blowing up a point preserves the numbers $h^{0,q}$,
without a K\"ahler hypothesis; see \cite[Corollary~1.4]{RYY}.
Thus the claimed Hodge numbers follow from \eqref{newhodge}.
Since 
\begin{align*}b^1(\PP^3) = 0 \neq 2 h^{0,1}(\widehat{X}_m) = 2(2m +1),
\end{align*} 
$\widehat{X}_m$ cannot be K\"ahler. 
Blowing up preserves the meromorphic function field, so
$a(\widehat X_m)=a(X_m)=1$ as well.
Birational invariance of Kodaira dimension implies that
$\kappa(\widehat X_m)=\kappa(X_m)=-\infty$.

\section{Questions}\label{future}

The construction leaves several questions.

\begin{enumerate}
\item \emph{The full deformation space.}
What are $h^1(X_m,\Theta_{X_m})$ and the exact local Kuranishi
dimension? The construction gives a lower bound for the local
Kuranishi dimension, but does not determine the full Kuranishi germ.
Does every sufficiently small deformation of $X_m$ retain a
holomorphic torus fibration over $\PP^1$?

\item \emph{The remaining Hodge numbers.}
What is the full Hodge diamond of $X_m$? Beyond the identities
established here and their consequences under Serre duality, which
Hodge numbers are determined by $m$, and which can vary within
the constructed families? The corresponding Hodge numbers of
$\widehat X_m$ then follow from the blow-up formula
\cite[Theorem~1.1]{RYY}.

\item \emph{Relations between the examples.}
How are the families for different values of $m$ related in the
moduli space? Can they be joined by chains of holomorphic
deformations, or do they belong to distinct deformation classes?

\item \emph{The splitting types and the first examples.}
The construction determines $a_m+b_m=2m-1$, but not the individual
degrees in
\begin{align*}
 R^1f_{m*}\cO_{X_m}
 =\cO_{\PP^1}(a_m)\oplus\cO_{\PP^1}(b_m).
\end{align*}
Which pairs $(a_m,b_m)$ occur? What is the smallest positive value of $m$
for which the construction can be carried out starting from $X_0$?
The bounds obtained here are sufficient and need not be optimal.

\item \emph{Further examples and degeneration models.}
Beyond the configuration treated here, can period modification
be applied to the other known examples?
The $2$-parameter family in \cite{Via}, 
based on the configuration $\mathrm{III}^*\,\mathrm{I}_1\,\mathrm{I}_1\,\mathrm{I}_1$, 
has two rank-one degenerations, which would require a further local argument in order to adapt the period modification. The Liu--Rollenske family~\cite{LR26}, based on the configuration $\mathrm{III}^*\,\mathrm{II}\,\mathrm{I}_1$, has a cuspidal
degeneration, which would also require further arguments.
The same question applies to Engel's additional examples in the
\emph{Threefold Explorer}~\cite{Explorer}, obtained from other
rational elliptic-surface data in the Oguiso--Shioda
classification~\cite{OS}.
More generally, which of the local hypotheses in
Subsection~\ref{localmodels} can be weakened, and what other
degeneration models admit the same construction?
\end{enumerate}

\bibliographystyle{amsplain}
\bibliography{Viaclovsky_Comb_S6_References}

\providecommand{\bysame}{\leavevmode\hbox to3em{\hrulefill}\thinspace}
\providecommand{\MR}{\relax\ifhmode\unskip\space\fi MR }
% \MRhref is called by the amsart/book/proc definition of \MR.
\providecommand{\MRhref}[2]{%
  \href{http://www.ams.org/mathscinet-getitem?mr=#1}{#2}
}
\providecommand{\href}[2]{#2}
\begin{thebibliography}{10}

\bibitem{ABGKR}
Ilka Agricola, Giovanni Bazzoni, Oliver Goertsches, Panagiotis Konstantis, and
  S\"{o}nke Rollenske, \emph{On the history of the {H}opf problem},
  Differential Geom. Appl. \textbf{57} (2018), 1--9.

\bibitem{Angella}
Daniele Angella, \emph{Hodge numbers of a hypothetical complex structure on
  {$S^6$}}, Differential Geom. Appl. \textbf{57} (2018), 105--120.

\bibitem{AHV}
Jos\'{e}~Manuel Aroca, Heisuke Hironaka, and Jos\'{e}~Luis Vicente,
  \emph{Complex analytic desingularization}, Springer, Tokyo, 2018.

\bibitem{Besse}
Arthur~L. Besse, \emph{Einstein manifolds}, Ergebnisse der Mathematik und ihrer
  Grenzgebiete (3), vol.~10, Springer-Verlag, Berlin, 1987.

\bibitem{BuskinIzadi}
Nikolay Buskin and Elham Izadi, \emph{Twistor lines in the period domain of
  complex tori}, Geom. Dedicata \textbf{213} (2021), 21--47.

\bibitem{CDP98}
Fr\'{e}d\'{e}ric Campana, Jean-Pierre Demailly, and Thomas Peternell, \emph{The
  algebraic dimension of compact complex threefolds with vanishing second
  {B}etti number}, Compositio Math. \textbf{112} (1998), no.~1, 77--91.

\bibitem{CDP20}
\bysame, \emph{The algebraic dimension of compact complex threefolds with
  vanishing second {B}etti number}, Compos. Math. \textbf{156} (2020), no.~4,
  679--696.

\bibitem{AC}
{Claude} and Levent Alp{\"o}ge, \emph{The $(3,4,\infty)$ modular family of
  $2$-tori, completed at its three special points, is a complex structure on
  {$S^6$}}, Preprint, \url{https://alpo.ge/s6.pdf}, 2026.

\bibitem{Engel}
Philip Engel, \emph{Complex structures on {$S^6$}}, Preprint,
  \url{https://philip-engel.github.io/S6.pdf}, 2026.

\bibitem{Explorer}
\bysame, \emph{{Threefold Explorer}},
  \url{https://philip-engel.github.io/threefold-explorer/}, 2026.

\bibitem{FujikiBlowdown}
Akira Fujiki, \emph{On the blowing down of analytic spaces}, Publ. Res. Inst.
  Math. Sci. \textbf{10} (1974/75), 473--507.

\bibitem{GabberLodh}
Ofer Gabber and R\'emi Lodh, \emph{Cohomological flatness over discrete
  valuation rings: numerical and logarithmic criteria},
  \href{https://arxiv.org/abs/2403.01831}{arXiv:2403.01831}, 2024.

\bibitem{GHS}
Tom Graber, Joe Harris, and Jason Starr, \emph{Families of rationally connected
  varieties}, J. Amer. Math. Soc. \textbf{16} (2003), no.~1, 57--67.

\bibitem{GH}
Phillip Griffiths and Joseph Harris, \emph{Principles of algebraic geometry},
  Pure and Applied Mathematics, Wiley-Interscience [John Wiley \& Sons], New
  York, 1978.

\bibitem{GrothendieckTohoku}
Alexander Grothendieck, \emph{Sur quelques points d'alg\`ebre homologique},
  Tohoku Math. J. (2) \textbf{9} (1957), 119--221.

\bibitem{HartshorneDef}
Robin Hartshorne, \emph{Deformation theory}, Graduate Texts in Mathematics,
  vol. 257, Springer, New York, 2010.

\bibitem{HT}
Brendan Hassett and Yuri Tschinkel, \emph{Weak approximation over function
  fields}, Invent. Math. \textbf{163} (2006), no.~1, 171--190.

\bibitem{HirzKod}
F.~Hirzebruch and K.~Kodaira, \emph{On the complex projective spaces}, J. Math.
  Pures Appl. (9) \textbf{36} (1957), 201--216.

\bibitem{Hirzebruch1954}
Friedrich Hirzebruch, \emph{Some problems on differentiable and complex
  manifolds}, Ann. of Math. (2) \textbf{60} (1954), 213--236.

\bibitem{HVF}
Nobuhiro Honda and Jeff Viaclovsky, \emph{Some remarks on fibrations in complex
  geometry}, J. Math. Study \textbf{58} (2025), no.~4, 552--574.

\bibitem{HV}
\bysame, \emph{Fibrations on the 6-sphere and {C}lemens threefolds}, Adv. Math.
  \textbf{503} (2026), Paper No. 111221, 37 pages.

\bibitem{HKP}
Alan~T. Huckleberry, Stefan Kebekus, and Thomas Peternell, \emph{Group actions
  on {$S^6$} and complex structures on {$\bold P_3$}}, Duke Math. J.
  \textbf{102} (2000), no.~1, 101--124. \MR{1741779}

\bibitem{KaupKaup}
Ludger Kaup and Burchard Kaup, \emph{Holomorphic functions of several
  variables: An introduction to the fundamental theory}, De Gruyter Studies in
  Mathematics, vol.~3, Walter de Gruyter \& Co., Berlin, 1983.

\bibitem{KodairaII}
Kunihiko Kodaira, \emph{On compact analytic surfaces. {II}}, Ann. of Math. (2)
  \textbf{77} (1963), no.~3, 563--626.

\bibitem{KodairaIII}
\bysame, \emph{On compact analytic surfaces. {III}}, Ann. of Math. (2)
  \textbf{78} (1963), no.~1, 1--40.

\bibitem{KodairaStability}
\bysame, \emph{On stability of compact submanifolds of complex manifolds},
  Amer. J. Math. \textbf{85} (1963), no.~1, 79--94.

\bibitem{Kollar}
J\'{a}nos Koll\'{a}r, \emph{Rational curves on algebraic varieties}, Ergebnisse
  der Mathematik und ihrer Grenzgebiete (3), vol.~32, Springer-Verlag, Berlin,
  1996.

\bibitem{LRS}
Christian Lehn, S\"{o}nke Rollenske, and Caren Schinko, \emph{The complex
  geometry of a hypothetical complex structure on {$S^6$}}, Differential Geom.
  Appl. \textbf{57} (2018), 121--137. \MR{3758366}

\bibitem{LR26}
Wenfei Liu and S{\"o}nke Rollenske, \emph{Yet another family of complex
  structures on {$S^6$}},
  \href{https://arxiv.org/abs/2609.33826}{arXiv:2609.33826}, 2026.

\bibitem{Mumford}
David Mumford, \emph{An analytic construction of degenerating abelian varieties
  over complete rings}, Compositio Math. \textbf{24} (1972), 239--272.

\bibitem{OS}
Keiji Oguiso and Tetsuji Shioda, \emph{The {M}ordell-{W}eil lattice of a
  rational elliptic surface}, Comment. Math. Univ. St. Paul. \textbf{40}
  (1991), no.~1, 83--99.

\bibitem{RYY}
Sheng Rao, Song Yang, and Xiangdong Yang, \emph{Dolbeault cohomologies of
  blowing up complex manifolds}, J. Math. Pures Appl. (9) \textbf{130} (2019),
  68--92.

\bibitem{Raynaud}
M.~Raynaud, \emph{Sp\'{e}cialisation du foncteur de {P}icard}, Inst. Hautes
  \'{E}tudes Sci. Publ. Math. (1970), no.~38, 27--76.

\bibitem{SerreGAGA}
Jean-Pierre Serre, \emph{G\'{e}om\'{e}trie alg\'{e}brique et g\'{e}om\'{e}trie
  analytique}, Ann. Inst. Fourier (Grenoble) \textbf{6} (1955/56), 1--42.

\bibitem{Tosatti}
Valentino Tosatti, \emph{Uniqueness of {$\Bbb{C}\Bbb{P}^n$}}, Expo. Math.
  \textbf{35} (2017), no.~1, 1--12.

\bibitem{Via}
Jeff Viaclovsky, \emph{A two-parameter family of complex structures on
  {$S^6$}}, \href{https://arxiv.org/abs/2609.33785}{arXiv:2609.33785}, 2026.

\bibitem{YauReview}
S.-T. Yau, \emph{Review of geometry and analysis}, Asian J. Math. \textbf{4}
  (2000), no.~1, 235--278, Kodaira's issue.

\bibitem{YauCalabi}
Shing~Tung Yau, \emph{Calabi's conjecture and some new results in algebraic
  geometry}, Proc. Nat. Acad. Sci. U.S.A. \textbf{74} (1977), no.~5,
  1798--1799.

\end{thebibliography}
\end{document}